\newtheorem{theorem}{Theorem}[section]
\newtheorem{lemma}[theorem]{Lemma}
\newtheorem{proposition}[theorem]{Proposition}
\newtheorem{corollary}[theorem]{Corollary}
\theoremstyle{definition}
\theoremstyle{remark}
\numberwithin{equation}{section}
\newcommand{\R}{\mathbb{R}}
\newcommand{\C}{\mathbb{C}}
\newcommand{\SO}{\mathrm{SO}}
\newcommand{\SL}{\mathrm{SL}}
\newcommand{\wtSO}{\widetilde{\mathrm{SO}}}
\newcommand{\OO}{\mathcal{O}}
\newcommand{\HH}{\mathcal{H}}
\newcommand{\GG}{\mathcal{G}}
\newcommand{\VV}{\mathcal{V}}
\newcommand{\g}{\mathfrak{g}}
\newcommand{\h}{\mathfrak{h}}
\newcommand{\so}{\mathfrak{so}}
\newcommand{\spi}{\mathfrak{sp}}
\newcommand{\Ad}{\operatorname{Ad}}
\newcommand{\Iso}{\operatorname{Iso}}
\newcommand{\Kill}{\operatorname{Kill}}
\newcommand{\Aut}{\operatorname{Aut}}
\begin{document}

\title[Local and global rigidity for isometric actions]{Local and global rigidity for isometric actions of simple Lie groups on pseudo-Riemannian manifolds}

\author{Raul Quiroga-Barranco}
\address{Centro de Investigaci\'on en Matem\'aticas, Jalisco S/N, Col. Valenciana, Guanajuato, Guanajuato, 36023, Mexico}
\email{quiroga@cimat.mx}

\dedicatory{To Jimmie D. Lawson on the occasion of his 50 years as researcher at LSU.}

\thanks{This research was supported by a Conacyt grant and by SNI}

\subjclass[2010]{53C50, 53C24, 20G41, 57S20}
\keywords{Pseudo-Riemannian manifolds, exceptional Lie groups, rigidity results}

\maketitle

\begin{abstract}
    Let $M$ be a finite volume analytic pseudo-Riemannian manifold that admits an isometric $G$-action with a dense orbit, where $G$ is a connected non-compact simple Lie group. For low-dimensional $M$, i.e.~$\dim(M) < 2\dim(G)$, when the normal bundle to the $G$-orbits is non-integrable and for suitable conditions, we prove that $M$ has a $G$-invariant metric which is locally isometric to a Lie group with a bi-invariant metric (local rigidity theorem). The latter does not require $M$ to be complete as in previous works. We also prove a general result showing that $M$ is, up to a finite covering, of the form $H/\Gamma$ ($\Gamma$ a lattice in the group $H$) when we assume that $M$ is complete (global rigidity theorem). For both the local and the global rigidity theorems we provide cases that imply the rigidity of $G$-actions for $G$ given by $\SO_0(p,q)$, $G_{2(2)}$ or a non-compact simple Lie group of type $F_4$ over $\R$. We also survey the techniques and results related to this work.
\end{abstract}

\section{Introduction}\label{sec:intro}
In this paper we consider the dynamical systems associated to a non-compact simple Lie group $G$. There are plenty of examples of interesting $G$-actions that can be obtained through an algebraic construction. A most notable one is given by choosing a non-trivial homomorphism $G \rightarrow H$ into a non-compact type semisimple Lie group $H$ and a lattice $\Gamma \subset H$. These yield a $G$-action on the finite volume manifold $H/\Gamma$. Such examples are very important because of their complicated dynamics: they are ergodic for irreducible lattices. Furthermore, these are also interesting from a geometric viewpoint, since the $G$-action on $H/\Gamma$ is isometric for the pseudo-Riemannian metric coming from the bi-invariant one on $H$ defined by the Killing form of its Lie algebra. Hence, a natural problem is to study the properties of isometric $G$-actions on pseudo-Riemannian manifolds.

On the other hand, it has been proved over the last decades that $G$-actions with complicated dynamics have very rigid properties, even when they are only finite volume preserving. Nevertheless, some of the most interesting results have been obtained under the existence of some sort of geometric invariant. In particular, such results can be found in many of the works in our bibliography. Among these works, two have had a strong impact: Gromov's Rigid Transformation Groups \cite{Gromov} and Zimmer's paper on Automorphism groups of geometric manifolds \cite{ZimmerAutGeom}. These developed tools that have been used to exhibit the rigid behavior of $G$-actions in the presence of an invariant geometric structure.

In this work, we build on the theory from \cite{Gromov}, \cite{ZimmerAutGeom} as well as our own work to obtain results that prove the rigid behavior of $G$-actions preserving a pseudo-Riemannian metric. Our main setup is that of a finite volume analytic pseudo-Riemannian manifold $M$ carrying an isometric $G$-action with a dense orbit. In the past, we have shown that under the assumption of completeness of the metric, the manifold $M$ can be built from other Lie groups (see \cite{QuirogaAnnals}, \cite{QuirogaCh}, \cite{OQ-SO}, \cite{OQ-U}, \cite{QuirogaG2}, \cite{QuirogaRoblero}). Similar results have been obtained by other authors, see for example \cite{Sedano}. In all these works, it has been proved for several cases that if $M$ is low-dimensional with respect to $G$, i.e.~$\dim(M) < 2\dim(G)$, then $M$ is (up to a finite cover) $G$-equivariantly equivalent to either $(N \times G)/\Gamma$, for some discrete subgroup $\Gamma$ of $\Iso(N)\times G$, or to $H/\Gamma$ as above.

Within the framework just described, the first goal of this work is to present a local rigidity result, Theorem~\ref{thm:local-rigidity-general}, that deals with $G$-actions where $M$ is not necessarily complete. We consider the low-dimensional case $\dim(M) < 2\dim(G)$ and assume that the normal bundle to the $G$-orbits is non-integrable, as well as some other technical conditions, to conclude that the manifold $M$ has a $G$-invariant metric which is locally isometric to a simple group with a bi-invariant metric. Theorem~\ref{thm:local-rigidity-general} is very general but, most importantly, we provide several cases for which its hypotheses are satisfied. In particular, we obtain Corollaries~\ref{cor:so-local-rigidity}, \ref{cor:g22-local-rigidity} and \ref{cor:f4-local-rigidity} for which we conclude that the manifold $M$ has a $G$-invariant metric that is locally isometric to a bi-invariant metric on a Lie group for actions of $\SO_0(p,q)$, $G_{2(2)}$ and the non-compact simple Lie groups of type $F_4$ over $\R$, respectively.

The second goal is to put into perspective some of the previous results that have been proved before under the assumption of completeness. In this context, the main result is Theorem~\ref{thm:global-rigidity-general} which adds to the hypotheses of Theorem~\ref{thm:local-rigidity-general} the requirement of completeness to obtain a global description, up to a finite cover, of the type $H/\Gamma$ for the manifold $M$. Again, Theorem~\ref{thm:global-rigidity-general} is very general and requires some technical conditions, however it is possible to obtain this in several cases from more elementary properties. In this way we prove Corollaries~\ref{cor:so-global-rigidity}, \ref{cor:g22-global-rigidity} and \ref{cor:f4-global-rigidity} that establish global rigidity results for actions of $\SO_0(p,q)$, $G_{2(2)}$ and the non-compact simple Lie groups of type $F_4$ over $\R$, respectively. The first two of these corollaries have already appeared in \cite{OQ-SO} and \cite{QuirogaG2}, respectively, but the third one is new to the best of our knowledge.

Finally, we also present in the first few sections a short panoramic survey of the results related to the main theorems.

\section{Local freeness of isometric actions}\label{sec:localfree}
Let $G$ be a connected non-compact simple Lie group and let $M$ be a pseudo-Riemannian manifold with finite volume. We will assume in the rest of this work that $G$ acts non-trivially and smoothly on $M$ preserving the metric, but we will add further conditions as needed.

It was first proved in \cite{ZimmerSSAut} that for such actions the stabilizers have only the trivial possibilities. More precisely, there is the following general result for $G$-actions.

\begin{proposition}[Zimmer \cite{ZimmerSSAut}]
    For every finite measure preserving ergodic $G$-action the stabilizers are either $G$ or discrete on a conull set.
\end{proposition}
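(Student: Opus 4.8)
The plan is to analyze the measurable field of stabilizer Lie algebras and to apply the Borel density theorem to its distribution. First I would encode the stabilizers infinitesimally: for $X \in \g = \operatorname{Lie}(G)$ let $X^*$ be the vector field on $M$ generated by the flow $t \mapsto \exp(tX)$, and set
\[
\g_x = \{X \in \g : X^*_x = 0\},
\]
which is precisely the Lie algebra of the stabilizer $G_x$, since $X^*_x = 0$ forces $x$ to be fixed by the flow of $X$. The assignment $x \mapsto \g_x$ defines a map $\phi$ from $M$ into the real Grassmann variety $\operatorname{Gr}(\g) = \bigsqcup_k \operatorname{Gr}_k(\g)$ of linear subspaces of $\g$. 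This map is measurable: the rank of the linear map $X \mapsto X^*_x$ is a measurable function of $x$, and on each of its (measurable) level sets the subspace $\g_x$ varies measurably. The decisive structural feature is $G$-equivariance: from $G_{gx} = g G_x g^{-1}$ one gets $\g_{gx} = \Ad(g)\,\g_x$, so $\phi$ intertwines the $G$-action on $M$ with the $\Ad$-action of $G$ on $\operatorname{Gr}(\g)$.

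Next I would transport the measure. Since the adjoint representation is algebraic and each $\operatorname{Gr}_k(\g)$ is a projective variety, the $\Ad$-action is an algebraic action of the non-compact simple real algebraic group $\Ad(G)$. As $\mu$ is $G$-invariant and $\phi$ is equivariant, the pushforward $\nu = \phi_*\mu$ is a finite $\Ad(G)$-invariant measure on $\operatorname{Gr}(\g)$, and it is ergodic because $\mu$ is. The heart of the argument is then the measure-theoretic Borel density theorem: a finite invariant measure for an algebraic action of a semisimple Lie group with no compact factors is supported on the set of fixed points. Applying this to $\nu$ shows that for $\mu$-a.e.\ $x$ the subspace $\g_x$ is fixed by $\Ad(G)$, i.e.\ $\g_x$ is an ideal of $\g$.

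Since $G$ is simple, the only ideals are $0$ and $\g$, so for $\mu$-a.e.\ $x$ one has $\g_x = 0$ or $\g_x = \g$. In the first case $G_x$ is discrete; in the second, connectedness of $G$ forces $G_x = G$. Finally, ergodicity upgrades this pointwise dichotomy to a global one: the two $G$-invariant measurable sets $\{x : \g_x = 0\}$ and $\{x : \g_x = \g\}$ cover a conull set, so exactly one of them is conull, which is the asserted conclusion.

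The main obstacle is the correct invocation of the Borel density theorem. One must verify that the $\Ad$-action is genuinely algebraic over $\R$ and that $\Ad(G)$ has no compact factors (both guaranteed here because $G$ is non-compact and simple), and one must accommodate the fact that $\operatorname{Gr}(\g)$ is a disjoint union of projective varieties rather than a single one; this is handled by first passing to the $G$-invariant measurable set on which $\dim \g_x$ is constant, where $\phi$ lands in a single $\operatorname{Gr}_k(\g)$. The measurability of $\phi$ is routine but deserves a careful statement, since the stabilizer dimension may jump on a null set.
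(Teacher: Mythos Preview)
The paper does not supply its own proof of this proposition; it is stated as a citation to Zimmer's paper \cite{ZimmerSSAut}, so there is no in-paper argument to compare against. That said, your proposal is correct and is essentially Zimmer's original strategy: encode the stabilizer by its Lie algebra, view the resulting map into the Grassmannian of $\g$ as $\Ad$-equivariant, push forward the invariant measure, and invoke the Borel density/Furstenberg principle to force the image to lie in the $\Ad(G)$-fixed subspaces, i.e.\ the ideals of $\g$. Simplicity then leaves only $0$ and $\g$, and ergodicity picks one alternative on a conull set.

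One small caveat worth flagging: as you have written it, the definition $\g_x = \{X \in \g : X^*_x = 0\}$ presupposes a smooth action so that the infinitesimal generators $X^*$ make sense. The proposition as stated in the paper is phrased for an arbitrary finite measure preserving ergodic $G$-action, with no smoothness hypothesis. In the purely measurable (standard Borel) setting one instead uses that stabilizers are closed subgroups, sends $x$ to $G_x$ in the Chabauty space of closed subgroups (or to $\operatorname{Lie}(G_x)$ in the Grassmannian), checks measurability there, and then runs the identical Borel density argument. Since the ambient context of the paper is smooth isometric actions, and Zimmer's original article \cite{ZimmerSSAut} is likewise in the smooth category, your formulation is adequate for the purposes at hand; just be aware that the vector-field description is a convenience of the smooth setting rather than an intrinsic part of the proof.
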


At the time, this was already interesting and very useful. Nevertheless, it was proved about one decade latter that a much stronger conclusion holds for this sort of actions.

\begin{proposition}[Zimmer \cite{StuckZimmer}]
    Assume that $G$ has finite center and real rank at least $2$. Then, any faithful, ergodic and finite measure preserving $G$-action is essentially free: i.e.~the stabilizer of almost every point is trivial.
\end{proposition}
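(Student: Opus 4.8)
The plan is to encode the almost-everywhere stabilizer as an invariant random subgroup of $G$ and then deploy higher-rank rigidity, in the form of Kazhdan's property (T) and the structure theory of the Furstenberg boundary, to force that random subgroup to be trivial.

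First I would apply the preceding proposition of Zimmer to reduce to the discrete case. The set $\{x \in M : G_x = G\}$ is $G$-invariant and measurable, so by ergodicity it is null or conull; were it conull the action would be trivial on a conull set and hence not faithful. Thus $G_x$ is discrete for almost every $x$. I would then consider the stabilizer map $\Phi \colon M \to \mathrm{Sub}(G)$, $x \mapsto G_x$, into the space of closed subgroups equipped with the Chabauty topology. It is measurable and equivariant for the conjugation action, since $G_{gx} = g\,G_x\,g^{-1}$, so the pushforward $\nu = \Phi_*\mu$ is a conjugation-invariant, and (by ergodicity of the action) ergodic, probability measure on $\mathrm{Sub}(G)$: an invariant random subgroup concentrated on discrete subgroups. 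The goal now reads: show $\nu = \delta_{\{e\}}$.

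The heart of the argument is a dichotomy for the generic discrete value $H = G_x$: either $H$ is trivial, or $H$ has finite covolume and $\nu$ is carried by the conjugates of a single lattice. To obtain it I would use both standing hypotheses. Real rank at least $2$ endows $G$ (with finite center) with property (T); this is precisely the ingredient that rules out the ``thin'' subgroups surviving in rank one. Concretely, I would exploit the amenability of the $G$-action on the boundary $B = G/P$, restrict the relevant cocycles and quasi-invariant measures on $B$ to the fibers of $\Phi$, and invoke the Nevo--Zimmer structure theorem: any nontrivial measurable projective factor of the system must arise from a proper parabolic, and property (T) then excludes infinite discrete stabilizers of infinite covolume. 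What remains is the lattice branch, which corresponds to an essentially transitive action $M \cong G/\Gamma$; in the remaining, non-homogeneous situation one is left with $G_x = \{e\}$ almost everywhere. Finite center provides the last step: a normal discrete subgroup of $G$ is central and finite, so faithfulness collapses any residual central stabilizer to $\{e\}$, yielding essential freeness.

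I expect the main obstacle to be exactly the exclusion of infinite discrete stabilizers of infinite covolume and the attendant analysis of the boundary action — this is where rank $\geq 2$ and property (T) are indispensable and where the rank-one counterexamples intervene. The reduction to an invariant random subgroup and the handling of the central and lattice cases are comparatively routine once this rigidity input is available.
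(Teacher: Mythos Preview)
The paper does not supply a proof of this proposition; it is quoted from \cite{StuckZimmer} as part of the survey in Section~\ref{sec:localfree}. So there is no ``paper's own proof'' to compare against, and what follows is an assessment of your sketch on its own merits.

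Your framework---pushing the stabilizer map forward to an ergodic invariant random subgroup on $\mathrm{Sub}(G)$, concentrated on discrete subgroups, and then proving a dichotomy (trivial versus lattice)---is exactly the structure of the Stuck--Zimmer argument, and in fact is the modern way to phrase it. Two substantive issues remain.

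First, the lattice branch does not disappear. The actual Stuck--Zimmer theorem is a dichotomy: either the action is essentially free, \emph{or} it is essentially transitive, i.e.\ measurably isomorphic to $G/\Gamma$ for a lattice $\Gamma$. The latter is faithful (provided $\Gamma$ meets the center trivially), ergodic and finite-measure-preserving, yet its stabilizers are the conjugates $g\Gamma g^{-1}$, which are infinite and certainly not central. Your final sentence---``a normal discrete subgroup of $G$ is central and finite, so faithfulness collapses any residual central stabilizer''---does not apply here, because the stabilizers in the lattice case are not normal. The proposition as printed in the paper is thus a slight over-simplification of the Stuck--Zimmer statement (it omits the essentially transitive alternative), and no argument can close that gap.

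Second, your description of the core step is too schematic to count as a proof. The exclusion of infinite discrete stabilizers of infinite covolume is not achieved by invoking property~(T) and a Nevo--Zimmer-type factor theorem in the way you suggest. What Stuck and Zimmer actually prove is an \emph{intermediate factor theorem}: any measurable $G$-factor between $G/P \times X$ and $X$ is of the form $G/Q \times X$ for a parabolic $Q \supset P$. Applying this with $X$ the space of stabilizers (or the original action), together with the fact that a discrete subgroup either has full limit set or is contained in a proper parabolic, forces the dichotomy. Property~(T) does enter, but through the normal subgroup/factor machinery rather than as a direct obstruction to ``thin'' subgroups; you would need to make this mechanism explicit rather than gesture at it.
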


As noted in the introduction, the development of rigidity results for $G$-actions built many important conclusions from the assumption on the existence of some invariant geometric structure. Note that this where smoothness actually plays a role and, in some cases, there is a chance to obtain conclusions at every single point. In his celebrated ``Rigid transformation groups'', Gromov established one of the first everywhere local freeness result.

\begin{proposition}[Gromov \cite{Gromov}5.4.A]
	If $M$ is Lorentz, then the $G$-action on $M$ is locally free.
\end{proposition}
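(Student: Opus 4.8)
The plan is to argue by contradiction, assuming $\g_x \neq 0$ at some point, and to extract from this a \emph{global} obstruction. It is worth stressing at the outset that no purely infinitesimal argument can work: $\SO_0(1,2)$ acts transitively and isometrically on two-dimensional de Sitter space with one-dimensional boost stabilizers, and this is excluded here \emph{only} because de Sitter space has infinite volume. So finiteness of the volume must enter in an essential way. First I would fix $x \in M$, write $\g_x$ for the Lie algebra of the stabilizer $G_x$, and recall the standard fact that an isometry of a connected pseudo-Riemannian manifold which fixes $x$ and whose differential at $x$ is the identity must itself be the identity, since it commutes with $\exp_x$ and is thus determined by its $1$-jet. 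Consequently the isotropy representation $\rho_x \colon (G_x)^0 \to O(T_xM, g_x)$ is faithful, and as $g_x$ has Lorentz signature $(1,n-1)$ with $n = \dim M$, differentiating yields an embedding of Lie algebras $\rho_x \colon \g_x \hookrightarrow \so(1,n-1)$.

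Next I would record the compatibility with the $G$-orbit: since $G_x$ preserves $Gx$, the map $\rho_x$ preserves the tangent space $T_x(Gx) \cong \g/\g_x$, acting there through the representation induced by $\operatorname{ad}$, and it preserves the restriction of $g_x$, a symmetric form of signature no worse than Lorentzian. I would then exploit that $\so(1,n-1)$ has real rank one: up to conjugacy and Jordan decomposition, every element is elliptic (conjugate into the maximal compact $\so(n-1)$), $\mathbb{R}$-split (a single boost with real eigenvalues $\pm\lambda$ and lightlike eigenvectors), or nilpotent with cube zero. Combining this with the simplicity of $\g$ — so that $\g_x$ is a proper, non-ideal subalgebra whose adjoint action on $\g/\g_x$ must reflect the root structure of $\g$ — I would reduce to two configurations for a nonzero $\g_x$: either $\g_x$ is elliptic, so that $\overline{\exp(\g_x)}$ is compact, or $\g_x$ contains an element $Y$ with $\rho_x(Y)$ having nonzero $\mathbb{R}$-split part.

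The heart of the proof, and the step I expect to be the main obstacle, is to convert each configuration into a contradiction using finiteness of the volume, precisely because both are infinitesimally consistent (as de Sitter space and the flat Lorentz torus carrying an Anosov isometry both demonstrate). In the $\mathbb{R}$-split case $\exp(tY)$ is a volume-preserving isometric flow fixing $x$ with a genuine exponentially expanding lightlike direction at $x$; here the idea would be to apply Poincar\'e recurrence to this flow and to use that, $\g$ being simple, $Y$ lies in an $\mathfrak{sl}_2$-triple whose nilpotent members generate unipotent flows, so that the expansion dictated by $Y$ must be carried by root directions that a single Lorentz timelike direction cannot accommodate once recurrence forbids orbits from escaping to infinity. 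In the elliptic case I would try to argue that a nonzero compact isotropy, in the presence of the full non-compact simple group, is incompatible with a finite invariant volume. Making either passage rigorous — turning the locally admissible infinitesimal data into a genuine global contradiction — is where the real work lies: Steps corresponding to the first two paragraphs are essentially formal, whereas this is exactly the point at which Gromov's finiteness and simplicity hypotheses become indispensable.
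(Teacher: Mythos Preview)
The paper does not supply a proof of this proposition; it is stated with attribution to Gromov \cite{Gromov} and immediately followed by further cited results of Szaro and Zeghib. There is therefore no in-paper argument to compare your proposal against.

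As for the proposal on its own terms: the opening setup is correct and standard --- the isotropy representation is faithful because isometries are determined by their $1$-jets, so $\g_x$ embeds in $\so(1,n-1)$, and your de Sitter remark correctly isolates that the finite-volume hypothesis is indispensable. But the proposal is, by your own admission, not a proof: you outline two cases and then say that making either one rigorous ``is where the real work lies,'' without supplying that work.

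There is also a structural gap before you even reach that point. Your dichotomy for a nonzero $\g_x$ --- either $\g_x$ is elliptic (precompact image) or it contains an element with nonzero $\R$-split part --- is not exhaustive. A subalgebra of $\so(1,n-1)$ can consist entirely of nilpotent elements (for instance, any subalgebra of the Heisenberg-type nilradical of the minimal parabolic), and such a $\g_x$ is neither precompact nor contains a split element. You listed ``nilpotent with cube zero'' among the Jordan types of individual elements but then dropped this possibility when passing to the subalgebra. Since a simple $\g$ has plenty of nilpotent subalgebras, nothing you have said rules this case out. Moreover, in the elliptic case your one-line sketch (``nonzero compact isotropy, in the presence of the full non-compact simple group, is incompatible with a finite invariant volume'') does not use the Lorentz signature at all, and without that ingredient there is no mechanism in sight for a contradiction. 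So as written the proposal is a reasonable orientation to the problem, but it neither completes the argument nor covers all cases.
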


It was this sort of results that started the study of $G$-actions on Lorentzian manifolds. In particular, these lead to Zimmer's work on the group of automorphisms of a Lorentzian manifold. More precisely, it was proved in \cite{ZimmerLorentz} that, in the non-amenable case, the connected component of the automorphism group of a compact Lorentzian manifold is locally isomorphic to $\SL(2,\R)\times K$ for some compact group $K$.

However, it turns out that local freeness for isometric $G$-actions holds for a more general setup.

\begin{proposition}[Szaro \cite{Szaro}]
    For an isometric $G$-action on $M$ as above, local freeness holds everywhere in the following cases.
	\begin{enumerate}
	   \item The $G$-action has a dense orbit.
	   \item The pseudo-Riemannian manifold $M$ is compact and complete.
	\end{enumerate}
\end{proposition}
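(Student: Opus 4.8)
The plan is to reformulate local freeness infinitesimally and then combine the rigidity of Killing fields with the measure-theoretic input and the analyticity of the metric. Write $X \mapsto X^{*}$ for the homomorphism $\g \to \mathfrak{X}(M)$ assigning to $X$ the Killing field generating the corresponding one-parameter flow of the $G$-action, and put $\g_x = \{X \in \g : X^{*}_x = 0\}$ for the isotropy subalgebra at $x$; local freeness at $x$ means exactly $\g_x = 0$. Two preliminary facts start the argument. First, $X \mapsto X^{*}$ is injective, since its kernel is an ideal of the simple algebra $\g$ and the action is non-trivial. Second, a Killing field is determined by its $1$-jet at one point, because the pair $(X^{*}, \nabla X^{*})$ satisfies a linear first-order ODE along curves (the Killing equation together with $\nabla^2 X^{*} = R(X^{*}, \cdot\,)$); as $\nabla X^{*}$ is skew-symmetric for the metric, it follows that $X \mapsto (\nabla X^{*})_x$ embeds $\g_x$ into $\so(T_xM) \cong \so(p,q)$. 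I emphasize that this embedding does not by itself force $\g_x = 0$---a one-dimensional subalgebra is orthogonally representable---so the vanishing must ultimately come from global structure rather than a pointwise obstruction.

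Next I would exhibit a large locally free set. The function $x \mapsto \dim \g_x$ is upper semicontinuous, so the locally free locus $U = \{x : \g_x = 0\}$ is open and $G$-invariant. To see that $U$ is non-empty, apply the quoted dichotomy of Zimmer to the ergodic components of the invariant measure: on a conull set the stabilizer is either all of $G$ or discrete. The first alternative would make some nonzero $X^{*}$ vanish on a set of positive measure; but the metric, hence $X^{*}$, is analytic, so such an $X^{*}$ vanishes identically, contradicting injectivity. Therefore $\g_x = 0$ on a conull set and $U$ is conull, in particular dense.

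For case (1) the dense orbit does the first half of the work: being open and $G$-invariant, $U$ contains or misses each orbit, and since it is dense it meets, hence contains, the dense orbit $G x_0$. The real difficulty is to promote ``$U$ open and dense'' to ``$U = M$''. Here I would use analyticity again: fixing a basis $X_1, \dots, X_n$ of $\g$ and an analytic local frame of $TM$, local freeness at $x$ is the non-vanishing of some $n \times n$ minor of the matrix with columns $X_1^{*}(x), \dots, X_n^{*}(x)$, so the singular set $S = M \setminus U$ is locally an analytic subvariety. Arguing by contradiction at a point $x \in \partial U \cap S$, choose $0 \neq X \in \g_x$; then $X^{*}$ vanishes at $x$ with $(\nabla X^{*})_x \neq 0$, its flow fixes $x$ and linearizes to the one-parameter group $\exp\!\bigl(t (\nabla X^{*})_x\bigr) \subset \SO(T_xM)$, and I would play this infinitesimal rotation off against the density of $G x_0$ near $x$ and the finiteness of the volume, propagating the degeneracy through the analytic rigidity of Killing fields, to reach a contradiction. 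I expect this to be the main obstacle: upper semicontinuity freely allows $\dim \g_x$ to jump up along the closed set $S$, and excluding such a jump is not a formal matter---it is precisely where the analytic and rigid nature of the pseudo-Riemannian structure must be used.

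For case (2) there is no dense orbit to feed into the open invariant set $U$, so the role of density must be taken over by completeness. Completeness makes every $X^{*}$ a complete field, so the flows are globally defined and, with $M$ compact, the image of $G$ lands in the isometry Lie group $\Iso(M)$; the singular set $S$ is now compact. I would then run the same analytic-extension argument on $S$, using completeness of the geodesic flow to propagate the vanishing of Killing fields and compactness to prevent escape to infinity. In both cases the structural heart is identical: reduce everything to the openness and non-emptiness of the locally free locus, and then defeat the potential upward jump of the isotropy dimension on the closed singular set by invoking the rigidity of analytic Killing fields.
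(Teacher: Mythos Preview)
The paper does not supply a proof of this proposition; it is quoted as a result of Szaro \cite{Szaro} and is part of the survey material in Section~\ref{sec:localfree}. So there is no ``paper's own proof'' to compare against, and your proposal must be judged on whether it would actually establish the statement under the hypotheses in force at that point.

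There is a genuine gap. Your argument leans on analyticity at two critical junctures: to conclude that a Killing field vanishing on a set of positive measure vanishes identically, and to make the singular set $S = M \setminus U$ an analytic subvariety. But at this stage of the paper the standing hypotheses are only that $M$ is smooth pseudo-Riemannian with finite volume and that the $G$-action is smooth and isometric; analyticity is introduced later, from Section~\ref{sec:KillingGromovZimmer} onward. Szaro's theorem is a smooth result, and your scheme as written does not prove it in the smooth category. (The first use of analyticity is in fact unnecessary: a Killing field is determined by its $1$-jet at a point, so if $X^*$ vanishes on any set with an accumulation point along which the $1$-jet also vanishes---in particular on a positive-measure set in a connected manifold---then $X^* \equiv 0$. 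But the second use, controlling $S$, has no such easy repair.)

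Even granting analyticity, the step you yourself flag as ``the main obstacle'' is not closed. You correctly observe that upper semicontinuity of $\dim\g_x$ allows a jump on a closed nowhere-dense set, and that excluding this requires real work; but ``play the infinitesimal rotation off against density and finite volume'' is a hope, not an argument. Szaro's actual proof does not proceed by bounding an analytic variety: it uses that for a \emph{simple} $\g$ the isotropy representation $\g_x \hookrightarrow \so(T_xM)$ interacts with the algebraic structure of $\g$ (via the adjoint action and the restriction of the ambient metric to the orbit directions) in a way that forces $\g_x$ to be an ideal-like object, hence zero. The finite-volume and dense-orbit/compact-complete hypotheses enter to make the orbit geometry constant, not to run a measure-versus-analytic-set argument. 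If you want to reconstruct the proof, the right thread to pull is the map $\Phi$ introduced just after this proposition in the paper and its $G$-invariance, rather than analytic continuation of the singular locus.
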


About at the same time, the following results were obtained where local freeness can also be concluded.

\begin{proposition}[Zeghib \cite{ZeghibAffine}]
	For an isometric $G$-action on $M$ as above, local freeness holds everywhere in the following cases.
	\begin{enumerate}
        \item All the $G$-orbits are isotropic.
        \item The $G$-action is ergodic.
        \item The group $G$ is split and not locally isomorphic to $\SL(2,\R)$.
	\end{enumerate}
\end{proposition}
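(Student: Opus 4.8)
The plan is to show, in each of the three cases, that the isotropy subalgebra $\g_x = \{Z \in \g : Z^*|_x = 0\}$ vanishes at every $x \in M$, where $Z^*$ denotes the Killing field induced by $Z \in \g$; local freeness is precisely the statement $\g_x = 0$. First I would use the basic rigidity of Killing fields: a Killing field is determined by its value and its first covariant derivative at a single point. Consequently the isotropy representation
\[
\iota_x : \g_x \longrightarrow \so(T_xM), \qquad Z \longmapsto \nabla Z^*|_x ,
\]
is injective, since $Z \in \g_x$ with $\iota_x(Z) = 0$ yields a Killing field with trivial $1$-jet at $x$, hence $Z^* \equiv 0$ and $Z = 0$ (the action is locally faithful because $\g$ is simple and the action is non-trivial). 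Thus $\g_x$ embeds as a subalgebra of $\so(T_xM)$, and it suffices to rule out $\g_x \neq 0$.

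\emph{The module-theoretic obstruction.} Writing $V_x = T_x(G\cdot x) \cong \g/\g_x$ for the orbit tangent space, the stabilizer preserves both the orbit and the metric, so $\iota_x(\g_x)$ leaves $V_x$ invariant and acts on it as the adjoint representation of $\g_x$ on $\g/\g_x$, while lying inside $\so(T_xM)$ for the non-degenerate form $B := \langle\cdot,\cdot\rangle_x$. The heart of the matter is that a nonzero proper subalgebra of the simple Lie algebra $\g$ cannot be realised in this way: the adjoint module $\g/\g_x$ and the requirement of preserving $B$ are mutually incompatible. Each hypothesis of the proposition is a device that forces this incompatibility.

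\emph{The three cases.} In the ergodic case I would invoke Zimmer's dichotomy (the first Proposition above): ergodicity makes the conjugacy class of the stabilizer an almost-everywhere invariant, so stabilizers are discrete on a conull set (the alternative $G_x = G$ almost everywhere gives a trivial action). Since $x \mapsto \dim\g_x$ is upper semicontinuous, the locally free locus is open and conull; promoting this to \emph{everywhere} is the delicate point, which I would settle by a limiting argument from the full-dimensional nearby orbits, using invariance of $B$ under $\iota_x(\g_x)$ to forbid the degeneration of $V_x$ at a would-be singular point. In the isotropic case, $V_x$ is a totally isotropic subspace, so $\iota_x(\g_x)$ sits inside the parabolic subalgebra of $\so(T_xM)$ stabilizing $V_x$, and its image in the Levi factor $\mathfrak{gl}(V_x)$ is exactly $\mathrm{ad}$ of $\g_x$ on $\g/\g_x$; I would then contradict $\g_x \neq 0$ using the simplicity of $\g$ together with the perfect pairing forced by $B$ between $V_x$ and $T_xM/V_x^{\perp}$. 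In the split case, with $\g$ split simple and not $\mathfrak{sl}(2,\R)$, I would use the restricted-root decomposition to extract from a nonzero $\g_x$ a nilpotent (or $\R$-split) element whose image under $\iota_x$ lies in $\so(T_xM)$ with the prescribed action on $\g/\g_x$; the dimensional and signature constraints on such elements then force $\g \cong \mathfrak{sl}(2,\R)$, precisely the excluded case.

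\emph{Main obstacle.} The genuine difficulty is the Lie-theoretic classification underlying the module-theoretic obstruction: determining exactly which subalgebras $\g_x$ of a simple real Lie algebra admit an embedding into some $\so(p,q)$ compatible with the adjoint action on $\g/\g_x$ and with a non-degenerate invariant form. The three hypotheses are the sufficient conditions under which this analysis can be pushed through, and the passage from almost-everywhere to everywhere in the ergodic case requires the additional geometric continuity input noted above.
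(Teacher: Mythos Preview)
The paper does not supply its own proof of this proposition: it is quoted as a result of Zeghib with a citation to \cite{ZeghibAffine}, in the survey portion of Section~\ref{sec:localfree}. So there is no ``paper's proof'' against which to compare your proposal; the relevant comparison would be with Zeghib's original arguments.

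As to your sketch itself, the general framework (injectivity of the isotropy representation $\iota_x:\g_x\hookrightarrow\so(T_xM)$ via the $1$-jet rigidity of Killing fields, then ruling out nonzero $\g_x$) is the correct starting point, and your treatment of case~(2) at least identifies the genuine issue: Zimmer's dichotomy gives local freeness only on a conull set, and upper semicontinuity of $\dim\g_x$ makes the locally free locus open and conull, which is \emph{not} enough on its own to conclude it is all of $M$. Your proposed ``limiting argument'' is in the right spirit but is not carried out. The serious gaps are in cases~(1) and~(3). In the isotropic case you assert that ``the perfect pairing forced by $B$ between $V_x$ and $T_xM/V_x^\perp$'' together with simplicity yields a contradiction, but no mechanism is given: a totally isotropic $V_x$ does force $\iota_x(\g_x)$ into the parabolic stabilizing $V_x$, yet this by itself does not preclude a nonzero $\g_x$, and you have not explained what representation-theoretic fact about $\g/\g_x$ is actually violated. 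In the split case you invoke ``dimensional and signature constraints'' on nilpotent or $\R$-split elements that allegedly force $\g\cong\mathfrak{sl}(2,\R)$, but no such constraint is stated, let alone proved; this is the entire content of the case and cannot be left as a gesture. Your own closing paragraph concedes that the ``Lie-theoretic classification underlying the module-theoretic obstruction'' is the main obstacle and is not addressed---which is to say that the proof has not been given.
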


From now on, we will assume that the $G$-action on $M$ has a dense orbit and so it is locally free as well. In particular, the set of $G$-orbits in $M$ defines a smooth foliation $\OO$, whose tangent bundle can be trivialized by the $G$-action. This is seen by considering the map
\begin{align*}
	\Psi: M \times \g &\rightarrow T\OO \\
	(x,X) &\mapsto X^*_x.
\end{align*}
Here we denote by $X^*$ the vector field on $M$ whose flow is the $1$-parameter group $(\exp(tX))_{t\in \R}$ for $X \in \g$. A similar notation will be used whenever an action a of group is involved. The map $\Psi$ is an isomorphism of vector bundles over $M$. This map allows us to pull-back the metric on the fibers of $T\OO$ (inherited from $M$) to symmetric bilinear forms over $\g$. In other words, we have the map
\begin{align*}
	\Phi : M &\rightarrow \mathrm{Symm}^2(\g) \\
		\Phi(x)(X,Y) &= \left<X^*_x, Y^*_x\right>_x,
\end{align*}
for every $x \in M$ and $X,Y \in \g$, where $\left<\cdot,\cdot\right>$ denotes the metric of $M$. We have also denoted by $\mathrm{Symm}^2(\g)$ the space of symmetric bilinear forms on $\g$.

A straightforward computation shows that $\Phi$ is $G$-equivariant. This uses that the $G$-action is isometric and the easy to prove identity
\[
	d g_x (X^*_x) =  (\Ad(g)(X)^*)_{gx},
\]
for all $x \in M$, $g \in G$ and $X \in \g$.

The following is a less trivial fact (see \cite{Szaro}, \cite{ZeghibAffine} and \cite{QuirogaAnnals}).

\begin{proposition}
    The map $\Phi$ is $G$-invariant. In particular, if the $G$-action is ergodic or has a dense orbit, then $\Phi$ is a constant function whose value is a fixed bi-invariant bilinear form on $\g$.
\end{proposition}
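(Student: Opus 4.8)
The plan is to bootstrap from the $G$-equivariance of $\Phi$ (already established above) to full $G$-invariance by exploiting the finite volume of $M$ through a Borel density--type argument. Recall that $G$ acts on the finite-dimensional real vector space $V := \mathrm{Symm}^2(\g)$ through the adjoint representation, with $(g\cdot B)(X,Y) = B(\Ad(g)^{-1}X, \Ad(g)^{-1}Y)$, and that equivariance means $\Phi(gx) = g\cdot \Phi(x)$. The first observation is that the pseudo-Riemannian volume provides a finite $G$-invariant measure $\mu$ on $M$, whose pushforward $\Phi_*\mu$ is then a finite $G$-invariant measure on $V$.

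The crux is to show that $\Phi_*\mu$ is supported on the fixed-point set $V^G$. The representation of $G$ on $V$ is algebraic, being the second symmetric power $\mathrm{Symm}^2$ of the adjoint representation, and it factors through the adjoint group $\Ad(G)$, a linear semisimple Lie group with no compact factors. For such actions, the standard input---a consequence of the Borel density theorem and Furstenberg's lemma on invariant measures---is that every finite invariant measure concentrates on the set of fixed points: a non-trivial orbit $G\cdot v$ is a non-compact homogeneous space $G/G_v$ with $G_v$ algebraic, and such a space carries a finite invariant measure only when $G_v$ is a lattice, which is impossible for a proper algebraic subgroup of a semisimple group since lattices are Zariski dense. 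I expect invoking this fixed-point concentration to be the main obstacle, as it is precisely where non-compactness and semisimplicity of $G$ are essential; without the finite-volume hypothesis equivariant non-constant maps (orbit maps) exist, so this ingredient cannot be dispensed with.

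With this in hand, $\Phi(x) \in V^G$ for $\mu$-almost every $x$. Since $V^G$ is a closed linear subspace and $\Phi$ is continuous, the set $\Phi^{-1}(V^G)$ is closed and of full measure; as the volume measure is positive on every nonempty open set, its complement is an open null set, hence empty. Therefore $\Phi(x)$ lies in $V^G$ for every $x \in M$, i.e.~it is an $\Ad(G)$-invariant, that is bi-invariant, symmetric bilinear form on $\g$. Invariance of $\Phi$ then follows at once from equivariance: $\Phi(gx) = g\cdot \Phi(x) = \Phi(x)$, because $\Phi(x)$ is $\Ad(G)$-invariant.

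Finally, under ergodicity or the existence of a dense orbit, a $G$-invariant continuous function must be constant: it is constant along the dense orbit and hence, by continuity, on all of $M$. By the previous paragraph its common value is a fixed bi-invariant bilinear form on $\g$, which completes the argument.
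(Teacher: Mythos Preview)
Your argument is correct and is precisely the standard Borel density/Furstenberg approach that the cited references (\cite{Szaro}, \cite{ZeghibAffine}, \cite{QuirogaAnnals}) employ; the paper itself does not spell out a proof here but defers to those sources, and your proposal matches them essentially line for line: push forward the finite invariant volume by the equivariant map $\Phi$, use that any finite $G$-invariant measure on an algebraic $G$-variety is supported on fixed points (Borel density), and upgrade from almost-everywhere to everywhere via continuity and full support of the volume.

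One small wrinkle: in your last paragraph you justify constancy only via a dense orbit, yet the statement also covers the ergodic case, where a dense orbit is not assumed. The fix is immediate and you have all the pieces: a continuous $G$-invariant function is, by ergodicity, $\mu$-almost everywhere equal to a constant, and since the volume measure charges every nonempty open set, continuity forces equality everywhere. You might make that sentence explicit.
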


Since $G$ is simple, any bi-invariant bilinear form is either $0$ or non-degenerate. Hence, we obtain the following result.

\begin{corollary}\label{cor:metric_orbits}
    If the $G$-action on $M$ is either ergodic or has a dense orbit, then one of the following holds.
	\begin{enumerate}
        \item The $G$-orbits, and so $T\OO$, are isotropic.
        \item There is a bi-invariant pseudo-Riemannian metric on $G$ so that for every $x \in M$, the orbit map
			\begin{align*}
				G &\rightarrow Gx \\
				g &\mapsto gx,
            \end{align*}
            is a local isometry where $Gx$ carries the inherited metric of $M$.
	\end{enumerate}
\end{corollary}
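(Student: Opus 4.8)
The plan is to read off the two alternatives directly from the dichotomy on the constant value of $\Phi$. By the proposition preceding the statement, the density of an orbit (or ergodicity) forces $\Phi$ to be constant, and I would denote by $B \in \mathrm{Symm}^2(\g)$ its single value; this $B$ is a bi-invariant symmetric bilinear form on $\g$. Invoking the remark that for simple $G$ every bi-invariant form on $\g$ is either $0$ or non-degenerate, I would split into exactly these two cases, which I expect to produce alternatives (1) and (2) respectively.

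If $B = 0$, then for every $x \in M$ and all $X,Y \in \g$ we have $\langle X^*_x, Y^*_x\rangle_x = \Phi(x)(X,Y) = 0$. Since $\Psi$ restricts to a linear isomorphism $X \mapsto X^*_x$ of $\g$ onto $T_x\OO$, this says precisely that the metric of $M$ vanishes identically on $T_x\OO$; hence every orbit, and therefore $T\OO$, is isotropic. This is alternative (1), and requires no further argument.

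If instead $B$ is non-degenerate, I would first promote it to a metric on $G$ by left-translation, setting $\langle u,v\rangle_g = B(d(L_{g^{-1}})_g u, d(L_{g^{-1}})_g v)$; this is left-invariant by construction and right-invariant because $B$ is $\Ad$-invariant, so it is the sought bi-invariant pseudo-Riemannian metric on $G$, and it is non-degenerate because $B$ is. It remains to check that each orbit map $o_x : g \mapsto gx$ is a local isometry. At the identity, $d(o_x)_e(X) = X^*_x$, so the form pulled back from $T_x(Gx)$ by $d(o_x)_e$ is exactly $\Phi(x) = B$; that is, $d(o_x)_e$ is a linear isometry of $(\g, B)$ onto $T_x(Gx)$, and local freeness guarantees it is injective, hence an isomorphism, so $o_x$ is a local diffeomorphism near $e$.

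To upgrade this infinitesimal isometry at $e$ to a local isometry at every $g \in G$, I would exploit the equivariance $o_x \circ L_g = \tau_g \circ o_x$, where $L_g$ is left translation on $G$ and $\tau_g$ denotes the action of $g$ on $M$. Differentiating at $e$ gives $d(o_x)_g = d(\tau_g)_x \circ d(o_x)_e \circ (d(L_g)_e)^{-1}$, in which $d(L_g)_e$ is an isometry for the left-invariant metric, $d(\tau_g)_x$ is a linear isometry because the $G$-action is isometric (and carries $T_x(Gx)$ to $T_{gx}(Gx)$), and $d(o_x)_e$ is the isometry onto the orbit just exhibited; thus $d(o_x)_g$ is a composition of isometries, which yields alternative (2). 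The only non-formal point in the whole argument, and the step I would treat most carefully, is precisely this bootstrapping: lining up the two equivariances so that the composite is genuinely an isometry onto the tangent space of the orbit, and confirming that local freeness is exactly what turns the pointwise isometry into an honest local isometry.
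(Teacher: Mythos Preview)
Your argument is correct and follows exactly the route the paper indicates: use the constancy of $\Phi$ from the preceding proposition together with the fact that a bi-invariant form on a simple Lie algebra is either $0$ or non-degenerate, then read off alternatives (1) and (2). The paper presents the corollary as an immediate consequence without spelling out the verification that the orbit map is a local isometry; you have supplied those standard details (left-translating $B$, the equivariance $o_x\circ L_g=\tau_g\circ o_x$, and local freeness for injectivity) accurately.
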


Once this result has been achieved, we have complete knowledge of the geometry of $M$ along the $G$-orbits, except when these are isotropic. Recall that the bi-invariant metrics on $G$ are all easily given in terms of the Killing form of $\g$ and a complex structure on the latter when such exists.

By the previous remarks, the following result turns out be very useful. For the proof we refer to \cite{QuirogaCh}

\begin{proposition}\label{prop:dimM<2dimG}
    If the isometric $G$-action on $M$ has a dense orbit and $\dim(M) < 2 \dim(G)$, then the $G$-orbits are non-degenerate pseudo-Riemannian manifolds of $M$.
\end{proposition}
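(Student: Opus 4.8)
The plan is to deduce the statement directly from Corollary~\ref{cor:metric_orbits}, whose dichotomy already splits the behavior of the orbits into an isotropic case and a non-degenerate case. Since the $G$-action has a dense orbit, it is locally free everywhere (as recorded above), and so every orbit $Gx$ is an immersed submanifold of dimension exactly $\dim(G)$; equivalently, the fiber $T\OO_x$ has dimension $\dim(G)$ for every $x\in M$. Thus it suffices to rule out alternative~(1) of Corollary~\ref{cor:metric_orbits}, namely that the fibers $T\OO_x$ are totally isotropic in $T_xM$, under the dimension hypothesis $\dim(M)<2\dim(G)$.

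The key ingredient is the standard linear-algebra bound on the size of totally isotropic subspaces. If the metric of $M$ has signature $(p,q)$, so that $p+q=\dim(M)$, then any subspace on which the inner product vanishes identically has dimension at most $\min(p,q)$, and in particular at most $\lfloor \dim(M)/2\rfloor$. I would recall this fact, for instance by diagonalizing the form and observing that such a subspace injects into both the positive-definite and the negative-definite eigenspaces under the associated orthogonal projections. Applying it to $V=T\OO_x\subset T_xM$, the assumption that the orbits are isotropic would force
\[
    \dim(G)=\dim(T\OO_x)\le \frac{\dim(M)}{2},
\]
that is, $2\dim(G)\le \dim(M)$, in direct contradiction with the hypothesis $\dim(M)<2\dim(G)$.

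Hence alternative~(1) cannot occur, and Corollary~\ref{cor:metric_orbits} leaves only alternative~(2): there is a bi-invariant pseudo-Riemannian metric on $G$ for which each orbit map is a local isometry, so in particular the induced metric on every $G$-orbit is non-degenerate, as claimed. The argument is essentially a dimension count, so there is no deep obstacle; the only points requiring care are to confirm that ``isotropic'' in Corollary~\ref{cor:metric_orbits} means the full fiber $T\OO_x$ is self-orthogonal, so that the sharp bound $\min(p,q)$ is available, and that local freeness genuinely yields orbits of the maximal dimension $\dim(G)$ rather than something smaller. With these two checks in place I expect the proof to be short.
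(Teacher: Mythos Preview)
Your argument is correct. The paper itself does not supply a proof of this proposition; it simply refers the reader to \cite{QuirogaCh}. Your approach---invoking the dichotomy of Corollary~\ref{cor:metric_orbits} and ruling out the isotropic alternative via the elementary linear-algebra bound that a totally isotropic subspace of a signature-$(p,q)$ space has dimension at most $\min(p,q)\le \dim(M)/2$---is the natural one given how the paper has set things up, and is almost certainly what the cited reference contains. The two checks you mention are already handled by the surrounding text: Corollary~\ref{cor:metric_orbits}(1) explicitly says $T\OO$ is isotropic (so the full fiber is self-orthogonal), and local freeness together with the isomorphism $\Psi:M\times\g\to T\OO$ from Section~\ref{sec:localfree} guarantees each orbit has dimension exactly $\dim G$.
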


It remains to find a way to study the ``transverse'' to the foliation $\OO$ by $G$-orbits.

\section{Killing fields and Gromov-Zimmer machinery}\label{sec:KillingGromovZimmer}
In the first place, the most natural place to start is by considering Killing fields. We recall that a Killing field for a pseudo-Riemannian manifold is a smooth vector field whose one-parameter group of local diffeomorphisms consists of local isometries. For a pseudo-Riemannian manifold $N$ we will denote by $\Kill(N)$ the space of Killing fields globally defined on $N$. It is well known that $\Kill(N)$ is a Lie algebra. Furthermore, $\Kill(N)$ is a finite dimensional Lie algebra. We observe that a Killing field may not integrate to globally defined isometries, since the field may not be complete. However, it is known that the completeness of a pseudo-Riemannian manifold implies the completeness of every Killing field on it (see \cite{ONeill}).

On the other hand, on a pseudo-Riemannian manifold $N$ we can consider locally defined Killing fields that we will denote by $\Kill^{loc}(N)$. Again, it may happen that a locally defined Killing field does not extend to a global one. It was proved by Nomizu \cite{Nomizu} that every local Killing field (with an open connected domain) can be extended to a global one as long as a regularity condition is satisfied for the manifold. We refer to \cite{Nomizu} for the precise statement. It was also proved that such condition holds for analytic manifolds. We also observe that the results in \cite{ONeill} were stated for Riemannian manifolds, but those that are used here also hold for the pseudo-Riemannian case with the same proof.

Beyond these classical facts, Gromov \cite{Gromov} introduced a collection of tools that allows us to conclude the existence of further symmetries under our current conditions. The main point was to build infinitesimal Killing fields that are then extended to local and then global ones. For the following discussion we refer to \cite{GCT} and \cite{QuirogaCh}, particularly for the notation on jets of functions. For a pseudo-Riemannian manifold $N$, we will denote by $\Kill^k(N,x)$ the space of $k$-jets at $x$ of vector fields that preserve the metric, up to order $k$. The subspace of such infinitesimal Killing vector fields that vanish at $x$ is denoted by $\Kill^k_0(N,x)$. Similarly, we denote by $\Aut^k(N,x)$ the group of $k$-jets at $x$ of diffeomorphisms that fix $x$ and preserve the metric up to order $k$. Following this notation, we will write $\Kill_0(N,x)$ and $\Kill^{loc}_0(N,x)$ for the global and local Killing fields of $N$ that vanish at the point $x$.

It follows from \cite{Nomizu} that infinitesimal Killing fields can be extended to local and the global ones if the order is high enough and regularity conditions hold. We state here the corresponding result for analytic manifolds, for which the required regularity holds everywhere. We refer to \cite{GCT} and \cite{QuirogaCh} for further details.

\begin{proposition}
    Let $N$ be an analytic pseudo-Riemannian manifold. Then, for every $x \in N$ there exists a positive integer $k(x)$ such that the map
	\begin{align*}
		\Kill_0(N,x) &\rightarrow \Kill^k_0(N,x) \\
			X &\mapsto j^k(X),
	\end{align*}
    is an isomorphism for every $k \geq k(x)$.
\end{proposition}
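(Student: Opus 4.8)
The plan is to establish the two halves of the isomorphism—injectivity and surjectivity of the jet map—separately, and to locate the threshold $k(x)$ as the order at which the relevant finite-dimensional data stabilize. First I would record that both $\Kill_0(N,x)$ and each $\Kill^k_0(N,x)$ are finite-dimensional: the former because it is a subspace of the finite-dimensional Lie algebra $\Kill(N)$, and the latter because the $k$-jet of a field that preserves the metric up to order $k$ is determined by finitely many coefficients. The truncation maps $\Kill^{k+1}_0(N,x) \to \Kill^k_0(N,x)$ then assemble these spaces into an inverse system of finite-dimensional vector spaces, so their dimensions are non-increasing in $k$ and eventually constant; I would define $k(x)$ to be an order beyond which this dimension has stabilized and, simultaneously, the regularity needed below holds.

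For injectivity, which I expect to hold already for $k \geq 1$, I would use the classical propagation property of Killing fields on a connected analytic manifold: a global Killing field is determined by its value and the value of its covariant derivative at a single point, hence by its $1$-jet. Thus if $X \in \Kill_0(N,x)$ satisfies $j^k(X) = 0$ for some $k \geq 1$, then in particular the $1$-jet of $X$ at $x$ vanishes, and therefore $X$ vanishes identically on the connected manifold $N$, so $X = 0$ in $\Kill_0(N,x)$.

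The substantial part is surjectivity, and this is where I would rely on the extension machinery already recalled from \cite{Nomizu}, \cite{GCT} and \cite{QuirogaCh}. Once the order $k$ is large enough that the prolongation of the overdetermined Killing system has become regular—a condition which, for analytic manifolds, is met at and beyond some finite order—every infinitesimal Killing field of order $k$ is the $k$-jet of an actual local Killing field. Taking $k \geq k(x)$ so that this regularity threshold is reached, I would lift a given class in $\Kill^k_0(N,x)$ first to a local Killing field defined on a connected neighborhood of $x$ realizing the prescribed jet (in particular vanishing at $x$, since its value is part of the jet data), and then, invoking Nomizu's theorem in the analytic case, extend it to a globally defined Killing field $X \in \Kill_0(N,x)$. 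By construction $j^k(X)$ equals the original class, so the jet map is surjective; together with injectivity this gives the asserted isomorphism for all $k \geq k(x)$.

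The main obstacle is precisely the passage, in the surjectivity step, from a purely formal jet-level solution of the Killing equations to a genuine local solution. This requires controlling the integrability of the prolonged Killing system and knowing that its rank stabilizes at some finite order, which is exactly the content of the regularity hypothesis that analyticity guarantees; the threshold $k(x)$ is then dictated both by where this stabilization occurs and by where $\dim \Kill^k_0(N,x)$ settles to $\dim \Kill_0(N,x)$.
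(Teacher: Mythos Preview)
The paper does not supply its own proof here; it states the proposition and defers to \cite{Nomizu}, \cite{GCT}, and \cite{QuirogaCh}. Your outline follows exactly the route those references and the paper's surrounding discussion suggest: injectivity from the fact that a Killing field on a connected manifold is determined by its $1$-jet, and surjectivity by first integrating a high-order jet to a local Killing field (this is where analyticity enters, via stabilization of the prolonged system) and then extending locally to globally via Nomizu's theorem. In that sense your approach coincides with what the paper has in mind.

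That said, there is a genuine gap in the surjectivity step as you have written it, and it is worth naming even though the paper's own summary glosses over the same point. Nomizu's local-to-global extension of Killing fields, in the analytic setting, requires $N$ to be \emph{simply connected}; on a general analytic pseudo-Riemannian manifold monodromy along non-contractible loops can obstruct the extension. The flat torus $T^2$ is already a counterexample to the proposition as stated: the local rotation field about a point $x$ furnishes a non-zero element of $\Kill^k_0(T^2,x)$ for every $k$, whereas $\Kill_0(T^2,x)=0$ since the only global Killing fields are the translations, so no choice of $k(x)$ makes the jet map surjective. In the paper this causes no harm because the result is only ever applied with $N=\widetilde{M}$, which is simply connected by construction; your argument should either add that hypothesis explicitly or note that the local-to-global step depends on it.
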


The conclusion is that to obtain global symmetries in the form of Killing fields, it is enough to build infinitesimal ones of high enough order. This is the setup corresponding to fiber bundles over a manifold. In the first place, it is possible to embed the isometries of a pseudo-Riemannian manifold $N$ into its linear frame bundle $L(N)$ by the map
\begin{align*}
	\Iso(N) &\rightarrow L(N) \\
	\varphi &\mapsto j^1_{x_0}(\varphi)
\end{align*}
where we have fixed a point $x_0 \in N$ as a reference. This is in fact the main step used to prove that $\Iso(N)$ is a Lie group (see \cite{KobayashiTransGroups}). This can be generalized to a map defined on $\Aut^k(N,x)$ into higher order frame bundles. Then, the following result from \cite{ZimmerLorentz} becomes very useful.

\begin{proposition}[Zimmer \cite{ZimmerLorentz}]
    Let $G$ be as above, and assume that it acts on a principal bundle $P \rightarrow N$ with algebraic structure group $H$ and preserving an ergodic finite measure on $N$. Then, there is an embedding of Lie algebras $\g \subset \h$.
\end{proposition}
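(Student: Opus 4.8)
The plan is to realize the $G$-action on the bundle as a measurable cocycle on the base $N$ and then invoke Zimmer's cocycle superrigidity theorem, using the simplicity of $\g$ to upgrade the resulting homomorphism into an embedding of Lie algebras.

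First I would fix a measurable section $\sigma : N \rightarrow P$ of the principal bundle, which exists because $P \rightarrow N$ is a principal $H$-bundle over the finite-measure (so, after normalization, probability) space $N$. Writing the $G$-action on $P$ in terms of $\sigma$ yields $g \cdot \sigma(n) = \sigma(gn)\,\alpha(g,n)$ for a measurable map $\alpha : G \times N \rightarrow H$; here I use that the left $G$-action commutes with the right $H$-action and covers the $G$-action on $N$. A direct computation expanding $(g_1 g_2)\cdot\sigma(n)$ in two ways gives the cocycle identity $\alpha(g_1 g_2, n) = \alpha(g_1, g_2 n)\,\alpha(g_2, n)$. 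Replacing $\sigma$ by another section changes $\alpha$ by a coboundary, so the cohomology class of $\alpha$ is the genuine invariant attached to the bundle action.

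Next I would apply the form of Zimmer's cocycle superrigidity theorem applicable to $G$. Since $G$ is connected, non-compact and simple, acts ergodically while preserving a finite measure on $N$, and since $H$ is algebraic, the theorem provides—after passing to the algebraic hull of $\alpha$ and modifying $\alpha$ within its cohomology class—a presentation $\alpha(g,n) = \pi(g)\,c(g,n)$, where $\pi : G \rightarrow H$ is a continuous homomorphism and $c$ takes values in a compact subgroup of $H$ centralizing $\pi(G)$. Differentiating $\pi$ then produces a homomorphism of Lie algebras $d\pi : \g \rightarrow \h$.

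Finally I would exploit the simplicity of $\g$: the kernel of $d\pi$ is an ideal, hence either $0$ or all of $\g$, so the embedding $\g \subset \h$ holds as soon as $\pi$ is non-trivial. The hard part will be exactly this non-triviality, equivalently the non-compactness of the algebraic hull of $\alpha$: if $\pi$ were trivial, $\alpha$ would be cohomologous to a cocycle into a compact subgroup of $H$, which is precisely the degenerate case that must be excluded. I would discharge this using that the cocycle arises from a non-trivial $G$-action coupled to a structure group $H$ that encodes indefinite (non-Riemannian) directions, so that the hull cannot be compact; granting this, $d\pi$ is injective and its image is the desired copy of $\g$ inside $\h$.
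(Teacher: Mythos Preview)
The paper does not supply its own proof of this proposition; it is simply quoted from \cite{ZimmerLorentz} as a black-box input to the Gromov--Zimmer machinery. So there is no in-paper argument to compare against.

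That said, your proposal has a real gap. The cocycle superrigidity theorem you invoke in the second step --- that $\alpha$ is cohomologous to $\pi(g)\,c(g,n)$ with $\pi : G \to H$ a continuous homomorphism and $c$ valued in a compact subgroup centralizing $\pi(G)$ --- requires $G$ to have real rank at least~$2$. The proposition, and the paper throughout, is stated for an arbitrary connected non-compact simple Lie group, which includes rank-one groups such as the covers of $\SL(2,\R)$ and $\SO_0(1,n)$; for these the superrigidity decomposition you call on simply does not exist. Zimmer's actual route in \cite{ZimmerLorentz} is softer: one passes to the \emph{algebraic hull} $L \subset H$ of the cocycle and uses the Borel density theorem (which needs only that $G$ be non-compact simple and the measure finite and invariant) to control the structure of $L$ and produce the embedding $\g \hookrightarrow \mathfrak{l} \subset \h$. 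No rank hypothesis enters, and no decomposition of the cocycle into homomorphism-times-compact is required.

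You are correct that non-triviality is delicate; indeed, read literally the proposition is false (take $P = N \times H$ with $G$ acting only on $N$ and $H$ trivial). In the paper's application $P$ is a higher-order frame bundle and the $G$-action comes from an effective isometric action on the base, which supplies the missing non-degeneracy; but that is context imported from outside the stated hypotheses, and your appeal to ``indefinite (non-Riemannian) directions'' is precisely such an importation rather than a proof of the proposition as stated.
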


Once these tools have been developed, we can obtain the following result. We refer to \cite{QuirogaCh} for the proof and further details.

\begin{theorem}\label{thm:rho-map}
    Suppose that $G$ and $M$ are as before with $M$ analytic, that the $G$-action on $M$ is analytic, isometric and has a dense orbit. Then, there exists a dense conull subset $S \subset \widetilde{M}$ such that for every $x \in S$ the following properties are satisfied.
	\begin{enumerate}
        \item There is an injective homomorphism of Lie algebras $\rho_x : \g \rightarrow \Kill(\widetilde{M})$, which is then an isomorphism onto its image $\g(x) = \rho_x(\g)$.
        \item Every element of $\g(x)$ vanishes at $x$, in other words, we have $\g(x) \subset \Kill_0(\widetilde{M},x)$.
        \item For every $X,Y \in \g$ we have
		\[
			[\rho_x(X),Y^*] = [X,Y]^*,
		\]
		for all $X,Y \in \g$.
	\end{enumerate}
\end{theorem}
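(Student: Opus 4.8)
The plan is to realize each $\rho_x$ through the Gromov--Zimmer construction of infinitesimal Killing fields, which are then extended to global ones by analyticity. First I would record the reformulation that makes the three conditions transparent. A standard computation of Lie brackets of fundamental vector fields gives $[X^*,Y^*] = -[X,Y]^*$, so that setting $Z_X = \rho_x(X) + X^*$ recasts condition (3) as $[Z_X, Y^*] = 0$ for all $Y$ --- that is, $Z_X$ is a $G$-invariant Killing field --- and recasts condition (2) as $Z_X(x) = X^*_x$; one then checks directly that $X \mapsto Z_X$ is a Lie algebra homomorphism. Thus, morally, the theorem amounts to producing at a generic $x$ enough \emph{extra} Killing symmetries commuting with the $G$-action to reproduce the orbit directions $X^*_x$ at $x$. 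The substance of the statement is the existence of these symmetries, and that is exactly what the machinery of the previous section supplies.

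The core of the construction is to build these symmetries infinitesimally and then integrate them. I would lift the isometric $G$-action to the bundle $F^r(\widetilde{M})$ of $r$-th order frames for $r$ large; the metric singles out a $G$-invariant reduction whose structure group is algebraic (jets of metric-preserving germs fixing the origin), and the lifted action preserves the finite volume and is ergodic because the original action has a dense orbit. Zimmer's embedding result then provides, on a conull set, an embedding of Lie algebras of $\g$ into the structure algebra; unwinding the identification of the structure algebra with isotropy jets, this yields for almost every $x$ an injective homomorphism $\g \rightarrow \Kill^r_0(\widetilde{M}, x)$ into the order-$r$ infinitesimal Killing fields vanishing at $x$. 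The set $S$ of good points is conull and may be taken $G$-invariant, hence dense since the action has a dense orbit. Applying the extension proposition above (Nomizu's theorem together with analyticity, and the simple connectivity of $\widetilde{M}$), these high-order infinitesimal Killing fields extend uniquely first to local and then to global Killing fields, producing $\rho_x \colon \g \rightarrow \Kill(\widetilde{M})$. Injectivity in (1) is then automatic, since $\ker \rho_x$ is an ideal of the simple algebra $\g$ and $\rho_x \neq 0$, so $\rho_x$ is an isomorphism onto $\g(x)$; condition (2) holds by construction, the fields lying in the isotropy at $x$.

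The delicate point is condition (3), namely that the embedding is \emph{adapted} to the action rather than merely abstract. Here I would exploit the $G$-equivariance of the whole construction: the cocycle defining the lifted action is equivariant, which forces the one-jet of $\rho_x(X)$ at $x$ to act on the orbit directions $T_x\OO$ by the adjoint representation of $\g$. Concretely, since $\rho_x(X)$ vanishes at $x$, its flow fixes $x$ and the bracket $[\rho_x(X), Y^*]_x$ measures the infinitesimal rotation of $Y^*$ induced by that flow; equivariance identifies this rotation with $\mathrm{ad}(X)$, giving $[\rho_x(X), Y^*]_x = [X,Y]^*_x$. The identity between the two \emph{global} analytic Killing fields $[\rho_x(X), Y^*]$ and $[X,Y]^*$ then follows by propagation: both are Killing fields, both are determined by a sufficiently high jet at $x$, and analyticity upgrades agreement near $x$ to agreement on all of $\widetilde{M}$. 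As a consistency check, when $\dim(M) < 2\dim(G)$ the orbits are nondegenerate by Proposition~\ref{prop:dimM<2dimG}, the orbit map is a local isometry onto an orbit with a bi-invariant metric by Corollary~\ref{cor:metric_orbits}, and there the fields $X \mapsto X^L - X^R$ (the difference of the left- and right-invariant extensions of $X$) realize $\rho_x$ along the orbit, vanish at the basepoint, and satisfy (3) on the nose.

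The main obstacle is achieving adaptedness and integrability simultaneously. Zimmer's theorem alone gives only an abstract embedding of $\g$ into the structure algebra; it is Gromov's rigidity (finite type) of the pseudo-Riemannian structure that guarantees the infinitesimal symmetries exist consistently to all orders and integrate to genuine local Killing fields, while the equivariance of the cocycle is what pins down the precise bracket relation (3). Controlling these two features together --- and then transporting the resulting pointwise relations to global identities via analyticity on the conull, dense set $S$ --- is the technical heart of the argument, and is precisely the step for which \cite{Gromov} and \cite{QuirogaCh} are invoked.
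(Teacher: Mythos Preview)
The paper does not include its own proof of this theorem; it states the result and refers the reader to \cite{QuirogaCh} for the proof and further details. Your sketch follows precisely the Gromov--Zimmer machinery that the paper surveys in Section~\ref{sec:KillingGromovZimmer}: Zimmer's embedding of $\g$ into the structure algebra of a higher-order frame bundle, followed by Nomizu's extension result (via analyticity and simple connectivity of $\widetilde{M}$) to promote high-order infinitesimal Killing fields to global ones. Your reformulation via $Z_X = \rho_x(X) + X^*$ is exactly the map $\widehat{\rho}_x$ of Proposition~\ref{prop:rhox-GG(x)}, so your reading of condition (3) as \emph{centralization} of the $\widetilde{G}$-action is the one the paper itself exploits downstream. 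In short, your approach is the intended one, and the points you flag as delicate---adaptedness of the embedding via equivariance of the cocycle, and propagation of the bracket identity by analyticity---are indeed the technical core handled in \cite{GCT} and \cite{QuirogaCh}.
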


The previous result has a number of consequences that we now describe. First, we recall that the $G$-action on $M$ lifts to a $\widetilde{G}$-action on $\widetilde{M}$. With this setup, we will denote by the same symbol $\OO$ the foliation by $\widetilde{G}$-orbits on $\widetilde{M}$.
	
The subbundle $T\OO$ is clearly pointwise generated by the vector fields $Y^*$ with $Y \in \g$. Hence, it follows that the vector fields belonging to the Lie algebras $\g(x)$ preserve $T\OO$. Since such vector fields are also Killing fields, it follows that the elements of $\g(x)$ preserve $T\OO^\perp$ and $\OO$ as well.
	
It is well known that, in every manifold $N$, the vector fields that vanish at the point $x$ admit a representation on the tangent space at $x$. For such a vector field $Z$ the corresponding linear map is given by
\begin{align*}
	\lambda_x(Z) : T_xN &\rightarrow T_xN \\
	v &\mapsto [Z,V]_x
\end{align*}
where $V$ is any (local) vector field such that $V_x = v$. In particular, for every pseudo-Riemannian manifold $N$, we have a representation of $\Kill_0(N,x)$ given by
\begin{align*}
	\Kill_0(N,x) &\rightarrow \so(T_xN) \\
		Z &\mapsto \lambda_x(Z).
\end{align*}
Note that $\lambda_x(Z)$ lies in $\so(T_xN)$ because $Z$ is a Killing field.
	
The previous discussion and Theorem~\ref{thm:rho-map} imply that we have a representation of $\g$ given by
\[
	\lambda_x \circ \rho_x : \g \rightarrow \so(T_x\widetilde{M})
\]
for which the $T_x\OO$ and $T_x\OO^\perp$ are $\g$-submodules.
	
Theorem~\ref{thm:rho-map} already provides a description of the representation of $\g$ on $T_x\widetilde{M}$. We recall that for every $x \in \widetilde{M}$ the map
\[
	X \mapsto X^*_x
\]
yields a natural identification $T_x\OO \simeq \g$. Hence, Theorem~\ref{thm:rho-map}(3) says that, with respect to this identification, $T_x \OO$ is isomorphic as a $\g$-module to the adjoint representation of $\g$.

At this point we recall the problem posed at the end of Section~\ref{sec:localfree}: the study of the transverse direction to the foliation $\OO$ by $G$-orbits. Using the setup we have described, we need to consider the representation of $\g$ on the vector spaces $T_x \OO^\perp$. However, for this to be an actual transverse to the foliation $\OO$ we will consider the case $\dim(M) < 2 \dim(G)$. As noted in Proposition~\ref{prop:dimM<2dimG} this implies that the foliation $\OO$ has leaves that are non-degenerate for the metric on $M$. In the rest of this work we will assume that such dimension restriction holds.

Hence, we have an orthogonal non-degenerate decomposition
\[
	T\widetilde{M} = T\OO \oplus T\OO^\perp,
\]
and using the identification $\Psi : M \times \g \rightarrow T\OO$ from Section~\ref{sec:localfree} we realize the orthogonal projection $T \widetilde{M} \rightarrow T\OO$ as a map
\begin{align*}
	\omega : T\widetilde{M} &\rightarrow \g \\
		v &\mapsto X,
\end{align*}
where $v = X^*_x + u$, for some $X \in \g$ and $u \in T_x\OO^\perp$. We also consider the restriction $\Omega$ of $d\omega$ to $\wedge^2 T\OO^\perp$. It turns out that both maps are related to the $\g$-module structures described above with $\Omega$ providing an invariant that measures the integrability of $T\OO^\perp$. For the proof of the next result we refer to \cite{OQ-SO}.

\begin{proposition}\label{prop:o_O_homomorphisms}
    Let $G$, $M$ and $S$ be as in Theorem~\ref{thm:rho-map}, and assume that the $G$-orbits are non-degenerate submanifolds of the pseudo-Riemannian manifold $M$. Then, we have the following.
	\begin{enumerate}
        \item For every $x \in S$, the maps $\omega_x : T_x \widetilde{M} \rightarrow \g$ and $\Omega_x : \wedge^2 T_x \OO^\perp \rightarrow \g$ are homomorphisms of $\g$-modules for the $\g$-module structures described above.
        \item The normal bundle $T\OO^\perp$ is integrable if and only if $\Omega = 0$.
	\end{enumerate}
\end{proposition}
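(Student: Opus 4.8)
The plan is to prove Proposition~\ref{prop:o_O_homomorphisms} in two parts, first establishing the $\g$-equivariance of $\omega$ and $\Omega$, and then deriving the integrability criterion as a consequence.

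For part (1), I would begin with the map $\omega_x$. The $\g$-module structure on $T_x\widetilde{M}$ is the one given by $\lambda_x \circ \rho_x$, while the action on the target $\g$ is the adjoint representation (which matches the action on $T_x\OO$ under the identification $T_x\OO \simeq \g$ coming from $X \mapsto X^*_x$). So equivariance of $\omega_x$ amounts to the identity $\omega_x\bigl((\lambda_x\circ\rho_x)(Y)\,v\bigr) = \Ad(Y)\,\omega_x(v)$ for $Y \in \g$ and $v \in T_x\widetilde{M}$, where I abuse notation by writing $\Ad(Y)$ for $\ad(Y) = [Y,\cdot]$ at the Lie algebra level. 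The key input is Theorem~\ref{thm:rho-map}(3), namely $[\rho_x(Y),X^*] = [Y,X]^*$. First I would write $v = X^*_x + u$ with $X = \omega_x(v)$ and $u \in T_x\OO^\perp$, so that computing $(\lambda_x\circ\rho_x)(Y)\,v = [\rho_x(Y),V]_x$ for a local extension $V$ of $v$. Since the elements of $\g(x) = \rho_x(\g)$ are Killing fields preserving both $T\OO$ and $T\OO^\perp$ (as noted after Theorem~\ref{thm:rho-map}), the bracket $[\rho_x(Y),\cdot]$ respects the orthogonal splitting, so its $T_x\OO$-component is computed by applying $\omega_x$. Using Theorem~\ref{thm:rho-map}(3) on the $X^*$ piece gives $[\rho_x(Y),X^*]_x = [Y,X]^*_x$, whose $\omega$-image is exactly $[Y,X] = \Ad(Y)\omega_x(v)$; the $u$ piece lies in $T_x\OO^\perp$ and contributes nothing after projection. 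This yields the desired equivariance.

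For $\Omega_x = d\omega|_{\wedge^2 T\OO^\perp}$, I would use that $\Omega$ is constructed from $\omega$ by exterior differentiation, so it inherits naturality under the flows generated by the Killing fields in $\g(x)$. Concretely, each $Z \in \g(x)$ is an infinitesimal isometry preserving $T\OO^\perp$, hence its Lie derivative commutes with the operations defining $\Omega$; differentiating the equivariance of $\omega$ along these flows and restricting to $\wedge^2 T\OO^\perp$ produces the equivariance of $\Omega_x$. The cleanest route is to express $\Omega_x(u\wedge w) = -\omega_x([U,W]_x)$ for local sections $U,W$ of $T\OO^\perp$ extending $u,w$ (the standard formula relating $d$ of a projection to the bracket, using that $\omega$ annihilates $T\OO^\perp$), and then apply the already-established equivariance of $\omega_x$ together with the fact that $\lambda_x \circ \rho_x$ acts on brackets of normal vector fields by the Leibniz rule, since $\rho_x(Y)$ is Killing and preserves $T\OO^\perp$.

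Part (2) is then essentially formal: by the Frobenius theorem, $T\OO^\perp$ is integrable if and only if the bracket of any two local sections of $T\OO^\perp$ remains a section of $T\OO^\perp$, i.e.\ has vanishing $T\OO$-component. Under the identification above this component is precisely $-\Omega_x(u\wedge w)$, so integrability at every point is equivalent to $\Omega = 0$. The main obstacle I anticipate is part (1) for $\Omega$: one must be careful that $\Omega$ is genuinely tensorial (independent of the chosen extensions $U,W$) and that the restriction to $\wedge^2 T\OO^\perp$ interacts correctly with $d\omega$; verifying that the $\g(x)$-action differentiates through $d$ in a way compatible with this restriction — rather than producing extra terms from the $T\OO$-directions — is the delicate computational point, though it ultimately rests on the preservation of the orthogonal splitting by the Killing fields $\rho_x(Y)$.
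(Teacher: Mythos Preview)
Your argument is correct. The paper does not actually supply a proof of this proposition; it simply refers the reader to \cite{OQ-SO}. Your approach---verifying equivariance of $\omega_x$ directly from Theorem~\ref{thm:rho-map}(3) together with the fact that the Killing fields $\rho_x(Y)$ preserve the orthogonal splitting, then passing to $\Omega_x$ via the Cartan formula $d\omega(U,W) = -\omega([U,W])$ for normal sections and the Jacobi identity, and finally reading off part~(2) from Frobenius---is precisely the standard computation one finds in \cite{OQ-SO}, so there is nothing to contrast. The concern you flag about tensoriality of $\Omega$ and well-definedness of extensions is handled exactly as you indicate: $d\omega$ is automatically tensorial, and since $\rho_x(Y)$ preserves $T\OO^\perp$, the vector fields $[\rho_x(Y),U]$ and $[\rho_x(Y),W]$ are themselves legitimate normal extensions, so the Jacobi manipulation goes through without extra terms.
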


The first case to consider is that when the bundle $T\OO^\perp$ is integrable. When this occurs, one can describe the manifold $M$ as a local pseudo-Riemannian product. The most general result is the following. This behavior was first observed by Gromov \cite{Gromov} for Lorentz manifolds. We refer to \cite{QuirogaCh} for the proof.

\begin{theorem}\label{thm:GxNcase}
    Suppose that the isometric $G$-action on $M$ has a dense orbit and that the metric of $M$ has finite volume and is complete. If the $G$-orbits are non-degenerate and the normal bundle $T\OO^\perp$ is integrable, then there exist
	\begin{enumerate}
        \item an isometric finite covering $\widehat{M} \rightarrow M$ to which the $G$-action lifts,
        \item a simply connected pseudo-Riemannian manifold $\widetilde{N}$, and
        \item a discrete subgroup $\Gamma \subset G \times \Iso(\widetilde{N})$
	\end{enumerate}
    such that $\widehat{M}$ is $G$-equivariantly isometric to $(G\times\widetilde{N})/\Gamma$.
\end{theorem}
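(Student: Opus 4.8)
The plan is to show that, under the integrability hypothesis, the orthogonal splitting $T\widetilde{M} = T\OO \oplus T\OO^\perp$ is a decomposition into \emph{parallel} distributions, so that the pseudo-Riemannian de Rham--Wu decomposition theorem yields a global metric product on $\widetilde{M}$; the group $\Gamma$ then emerges as the deck group of $\widetilde{M} \to M$ after the two factors are identified. By Proposition~\ref{prop:dimM<2dimG} the $G$-orbits are non-degenerate, hence so is $T\OO^\perp$, and both distributions are integrable ($T\OO$ always, and $T\OO^\perp$ because the hypothesis gives $\Omega = 0$, via Proposition~\ref{prop:o_O_homomorphisms}). Since for two complementary, orthogonal, non-degenerate, integrable distributions being a local metric product is equivalent to $T\OO$ being parallel, equivalently to both foliations being totally geodesic, the local problem reduces to proving that $\OO$ and the foliation tangent to $T\OO^\perp$ have vanishing second fundamental form.

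First I would treat the orbit foliation $\OO$. By Corollary~\ref{cor:metric_orbits}(2) each function $x \mapsto \langle X^*_x, Y^*_x\rangle$ is constant (it is the value of the fixed bi-invariant form on $\g$), and each $X^*$ is a Killing field. Taking $V = X^*$ and $W$ arbitrary in the Killing identity $\langle \nabla_V X^*, W\rangle + \langle\nabla_W X^*, V\rangle = 0$, and using that $\langle X^*,X^*\rangle$ is constant (so that $\langle \nabla_W X^*, X^*\rangle = \tfrac12 W\langle X^*,X^*\rangle = 0$), gives $\nabla_{X^*}X^* = 0$; polarizing over $X$ shows that the second fundamental form of $\OO$ vanishes. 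For the normal foliation I would pair its second fundamental form against an orbit direction: for $u,u' \in T\OO^\perp$ and $X \in \g$,
\[
\langle \nabla_u u', X^*\rangle = -\langle u', \nabla_u X^*\rangle .
\]
The left-hand side is symmetric in $(u,u')$ because integrability of $T\OO^\perp$ makes the second fundamental form symmetric, while the right-hand side is antisymmetric in $(u,u')$ by the Killing identity for $X^*$; hence both vanish and the normal leaves are totally geodesic as well. This is exactly where the hypothesis $\Omega = 0$ enters. Consequently $T\OO$ is parallel, and every point of $\widetilde{M}$ has a neighborhood isometric to a product $U_1 \times U_2$ tangent respectively to $T\OO$ and $T\OO^\perp$.

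To globalize I would invoke completeness: the lifted metric on $\widetilde{M}$ is complete and $\widetilde{M}$ is simply connected, so the de Rham--Wu theorem upgrades the parallel splitting to a global isometric product $\widetilde{M} \cong \widetilde{M}_1 \times \widetilde{N}$, where $\widetilde{M}_1$ is the leaf of $\OO$ and $\widetilde{N}$ the leaf of $T\OO^\perp$ through the basepoint. The factor $\widetilde{M}_1$ is a complete, simply connected, totally geodesic orbit carrying a transitive, locally free, isometric $\widetilde{G}$-action whose induced metric is bi-invariant (Corollary~\ref{cor:metric_orbits}), so the orbit map identifies $\widetilde{M}_1$ with $\widetilde{G}$ equipped with a bi-invariant metric. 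The deck group $\pi_1(M)$ acts on $\widetilde{M}$ by isometries commuting with the lifted $\widetilde{G}$-action, hence preserves each of the canonically defined distributions $T\OO$ and $T\OO^\perp$; therefore every deck transformation respects the product and has the form $\gamma = (\gamma_1,\gamma_2) \in \Iso(\widetilde{G})\times \Iso(\widetilde{N})$. Finally, commutation with the $\widetilde{G}$-action by left translations forces $\gamma_1$ into the centralizer of left translations, namely a finite extension of the right translations $\cong \widetilde{G}$; after passing to a finite cover $\widehat{M} \to M$ to which the $G$-action lifts and on which the first factor descends to $G$, one obtains a discrete $\Gamma \subset G \times \Iso(\widetilde{N})$ with $\widehat{M} \cong (G\times\widetilde{N})/\Gamma$, $G$-equivariantly and isometrically, of finite volume.

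I expect the main obstacle to be this last, global step rather than the (clean) total-geodesy computation: controlling the components, center, and outer automorphisms inside $\Iso(\widetilde{G})$ so that the $\widetilde{M}_1$-part of each deck transformation is genuinely a translation, thereby confining $\Gamma$ to $G\times\Iso(\widetilde{N})$ instead of the a priori larger $\Iso(\widetilde{G})\times\Iso(\widetilde{N})$. This is precisely what necessitates passing to the finite cover $\widehat{M}$, and it also requires combining completeness with local freeness to ensure the orbit factor is the honest group $G$ (or $\widetilde{G}$) and not merely a discrete quotient of it.
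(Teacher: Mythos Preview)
The paper does not actually prove Theorem~\ref{thm:GxNcase}; it states the result and refers to \cite{QuirogaCh} for the proof. So there is no in-paper argument to compare against. That said, your outline is the standard route (and essentially the one taken in \cite{QuirogaCh}): show both foliations are totally geodesic, hence the orthogonal splitting is parallel, apply the pseudo-Riemannian de Rham--Wu decomposition to $\widetilde{M}$, identify the orbit factor with $\widetilde{G}$ via the orbit map, and then analyze the deck group inside $\Iso(\widetilde{G})\times\Iso(\widetilde{N})$.

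Two small corrections. First, you invoke Proposition~\ref{prop:dimM<2dimG} to get non-degeneracy of the orbits, but that proposition requires $\dim(M) < 2\dim(G)$, which is \emph{not} a hypothesis of Theorem~\ref{thm:GxNcase}; non-degeneracy is assumed outright, so just use it directly. Second, in your normal-foliation computation you write ``This is exactly where the hypothesis $\Omega=0$ enters''; in fact the integrability of $T\OO^\perp$ enters only to guarantee that the second fundamental form is symmetric (so the symmetric/antisymmetric dichotomy forces it to vanish). Your Killing-field argument for $\nabla_{X^*}X^*=0$ and the polarization are correct.

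Your caution about the final step is well placed: one uses that $\Iso_0(\widetilde{G}) = L(\widetilde{G})R(\widetilde{G})$ has finite index in $\Iso(\widetilde{G})$ (this is recalled in the paper, citing \cite{QuirogaAnnals}), so the centralizer of $L(\widetilde{G})$ meets $\pi_1(M)$ in a finite-index subgroup lying in $R(\widetilde{G})$. The passage from $\widetilde{G}$ to $G$ and the construction of the finite cover $\widehat{M}$ then go exactly as you sketch.
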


It is worthwhile to know that for suitable transverse structures for the $G$-orbits of these manifolds, it is possible to provide a more detailed description of $M$. We refer to \cite{QuirogaAnnals} and \cite{QuirogaCh} for such results.

\section{Structure of the Lie algebra centralizer of the action}\label{sec:centralizer}
In order to have a global description of the isometric $G$-action on $M$ as discussed in the previous sections, it is useful to consider the Killing centralizer for the $G$-action. More precisely, we now want to consider the Lie algebra
\[
	\HH = \{ X \in \Kill(\widetilde{M}) \mid dg(X) = X
			\text{ for all } g \in \widetilde{G} \}.
\]

In the first place, Gromov \cite{Gromov} proved that this centralizer is transitive on an open dense set under our current conditions. For a proof of the next result we refer to \cite{ZimmerEntropy}.

\begin{proposition}[Gromov \cite{Gromov}]\label{prop:HH-transitivity}
    Suppose that the isometric $G$-action on $M$ has a dense orbit and that $M$ and the $G$-action are analytic. Then, there is an open dense conull subset $U \subset \widetilde{M}$ such that $ev_x(\HH) = T_x \widetilde{M}$ for every $x \in U$.
\end{proposition}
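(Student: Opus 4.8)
The plan is to produce, at a generic point, enough $\widetilde G$-invariant Killing fields to span the whole tangent space, and then propagate this to an open dense conull set by a semicontinuity argument. First I would record that $\HH$ is finite dimensional, being a subalgebra of $\Kill(\widetilde M)$, and that the invariance condition $dg(Z)=Z$ for all $g\in\widetilde G$ is equivalent, by connectedness of $\widetilde G$, to $[Y^*,Z]=0$ for every $Y\in\g$; thus $\HH$ is exactly the centralizer of $\{Y^*:Y\in\g\}$ in $\Kill(\widetilde M)$. For $Z\in\HH$ and $g\in\widetilde G$ one has $dg_x(Z_x)=(dg(Z))_{gx}=Z_{gx}$, so that $dg_x(ev_x(\HH))=ev_{gx}(\HH)$ and the function $x\mapsto\dim ev_x(\HH)$ is $\widetilde G$-invariant. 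This function is lower semicontinuous, being the rank of an evaluation map with analytic entries, so its maximum is attained on an open set; since it is constant along the dense orbit, this open set meets and hence contains the dense orbit, while its complement is the zero locus of a top non-identically-vanishing minor, a proper analytic subset, and so is nowhere dense and of measure zero. Consequently it suffices to prove $ev_x(\HH)=T_x\widetilde M$ at a single point $x\in S$ lying on the dense orbit, and the resulting maximal-rank set is the desired $U$.

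The orbit directions are obtained explicitly. Fix $x\in S$ and let $\rho_x\colon\g\to\Kill(\widetilde M)$ be the homomorphism from Theorem~\ref{thm:rho-map}. Using part~(3) of that theorem together with the anti-homomorphism identity $[X^*,Y^*]=-[X,Y]^*$ for the fundamental vector fields, one computes
\[
    [\,\rho_x(X)+X^*,\,Y^*\,]=[X,Y]^*-[X,Y]^*=0
\]
for all $X,Y\in\g$. Hence $\rho_x(X)+X^*$ commutes with every $Y^*$ and, being a sum of Killing fields, belongs to $\HH$. Since $\rho_x(X)$ vanishes at $x$ by Theorem~\ref{thm:rho-map}(2), evaluation gives $(\rho_x(X)+X^*)_x=X^*_x$, and as $X$ ranges over $\g$ these span $T_x\OO$. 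Therefore $T_x\OO\subseteq ev_x(\HH)$ for every $x\in S$.

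It remains to capture the normal directions $T_x\OO^\perp$, and this is the heart of the matter. Two complementary inputs drive the step. First, I would note that the fields $Y^*+\rho_{x'}(Y)$ lie in $\HH$ for \emph{every} base point $x'\in S$, not only for $x'=x$; evaluating these at the fixed point $x$ and subtracting the already-available vector $Y^*_x$ shows that $\rho_{x'}(Y)_x\in ev_x(\HH)$ for all $x'\in S$ and $Y\in\g$, so the transverse content of $ev_x(\HH)$ is governed by how the Gromov fields attached to nearby base points protrude at $x$. Second, and this is where Gromov's deeper machinery enters, the abundance of Killing fields produced under our hypotheses forces the full local Killing algebra to be transitive on a dense open set. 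To transfer this transitivity to $\HH$ I would use that $\operatorname{ad}_{\rho_x(\g)}$ preserves $\HH$ — a Jacobi-identity computation resting on $[\rho_x(Y),W^*]=[Y,W]^*$ again lying in the span of the $Z^*$ — so that $ev_x|_{\HH}$ is a homomorphism of $\g$-modules onto a submodule of $T_x\widetilde M$ for the structure $\lambda_x\circ\rho_x$, and then invoke complete reducibility of the decomposition $T_x\widetilde M=T_x\OO\oplus T_x\OO^\perp$ recorded before Proposition~\ref{prop:o_O_homomorphisms} to force the transverse isotypic components into the image. The main obstacle is precisely this last transfer: the centralizer $\HH$ is defined through the action fields $Y^*$, whereas the clean $\g$-equivariance of $ev_x$ is expressed through the vanishing fields $\rho_x(Y)$, and reconciling these two $\g$-structures — equivalently, ruling out that $ev_x(\HH)$ is a proper submodule merely complementary to $T_x\OO$ — is the delicate point that Gromov's transitivity theorem for the full Killing algebra is designed to settle.
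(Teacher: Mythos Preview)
The paper does not give its own proof of this proposition: it attributes the result to Gromov and refers the reader to \cite{ZimmerEntropy}. So there is no in-paper argument to compare against, and your sketch should be judged on its own.

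Your reduction is sound: the rank of $ev_x|_{\HH}$ is lower semicontinuous and $\widetilde G$-invariant, so it suffices to hit the full tangent space at one good point, and the maximal-rank locus is then open, dense, and conull. Your construction of $\widehat\rho_x(X)=\rho_x(X)+X^*\in\HH$ and the conclusion $T_x\OO\subset ev_x(\HH)$ are correct, as is the Jacobi computation showing that $\HH$ is stable under $\operatorname{ad}_{\rho_x(\g)}$ and hence that $ev_x(\HH)$ is a $\g$-submodule of $T_x\widetilde M$ for the representation $\lambda_x\circ\rho_x$.

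The gap is in the last step. Knowing that $ev_x(\HH)$ is a $\g$-submodule containing $T_x\OO$ does \emph{not} force it to equal $T_x\widetilde M$: any $\g$-invariant complement to $T_x\OO$ could in principle fail to meet the image, and complete reducibility gives you no leverage here. Your first mechanism, evaluating $\rho_{x'}(Y)$ at $x$ for nearby $x'$, gives no a priori normal component, so it does not produce transverse vectors without further input. Your second mechanism defers to ``Gromov's transitivity theorem for the full Killing algebra'' and then proposes to transfer that to $\HH$; but this transfer is precisely the content of the proposition, and your module argument does not accomplish it. The $\g$-action whose invariants are $\HH$ is $Z\mapsto[Y^*,Z]$, whereas the $\g$-action for which $ev_x$ is equivariant is $Z\mapsto[\rho_x(Y),Z]$; these differ by an inner action, and that discrepancy is exactly the ``reconciliation'' you flag but do not resolve. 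The standard way to close this (as in \cite{Gromov}, \cite{GCT}, \cite{ZimmerEntropy}) is to pass to the bundle of orthonormal frames, where Killing fields are determined by their values, the lifted $\widetilde G$-action is free, and one can project a Killing field onto its $\widetilde G$-invariant part without losing control of its value over $x$; this is the missing idea. A minor additional point: you invoke the orthogonal splitting $T_x\widetilde M=T_x\OO\oplus T_x\OO^\perp$, which presupposes non-degenerate orbits, an assumption not present in the statement of the proposition.
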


The next step is to relate this centralizer $\HH$ to the $\g$-module structures considered in Section~\ref{sec:localfree}. This is achieved by considering the map introduced in the following result.

\begin{proposition}\label{prop:rhox-GG(x)}
    Let $G$, $M$ and $S$ be as in Theorem~\ref{thm:rho-map}. Then, for every $x \in S$ and $\rho_x$ given by Theorem~\ref{thm:rho-map}, the map defined by
	\begin{align*}
		\widehat{\rho}_x : \g &\rightarrow \Kill(\widetilde{M}) \\
			\widehat{\rho}_x(X) &= \rho_x(X) + X^*,
	\end{align*}
    is an injective homomorphism of Lie algebras whose image $\GG(x)$ lies in $\HH$. In particular, it induces a $\g$-module structure on $\HH$ with $\GG(x)$ a Lie subalgebra isomorphic to $\g$ both as a Lie algebra and a $\g$-module.
\end{proposition}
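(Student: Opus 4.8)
The plan is to establish the three assertions about $\widehat{\rho}_x$ in turn — that its image lies in $\HH$, that it is a homomorphism of Lie algebras, and that it is injective — and then read off the module statement. Throughout, the essential inputs are the commutation relation of Theorem~\ref{thm:rho-map}(3) and the elementary fact that $X \mapsto X^*$, the assignment of fundamental vector fields to the (left) $\widetilde{G}$-action, is an anti-homomorphism of Lie algebras, i.e.
\[
	[X^*, Y^*] = -[X,Y]^*, \qquad X, Y \in \g.
\]
Since all the genuine analysis is already packaged in Theorem~\ref{thm:rho-map} (whose construction of $\rho_x$ is the hard part, cited from the literature), the proposition itself is essentially a bookkeeping of brackets.

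First I would check that $\GG(x) \subseteq \HH$. Because $\widetilde{G}$ is connected, its fundamental vector fields $Y^*$ generate the action, so a Killing field $Z$ satisfies $dg(Z) = Z$ for all $g \in \widetilde{G}$ if and only if $[Y^*, Z] = 0$ for every $Y \in \g$. Applying this to $Z = \widehat{\rho}_x(X)$ and using Theorem~\ref{thm:rho-map}(3) together with the anti-homomorphism property gives
\[
	[\widehat{\rho}_x(X), Y^*] = [\rho_x(X), Y^*] + [X^*, Y^*] = [X,Y]^* - [X,Y]^* = 0,
\]
for all $X, Y \in \g$. Hence $\widehat{\rho}_x(X) \in \HH$ and therefore $\GG(x) = \widehat{\rho}_x(\g) \subseteq \HH$. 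This is the step where the precise construction of $\rho_x$ is exploited, and I regard it as the conceptual heart of the statement even though it collapses to a one-line cancellation.

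Next I would verify that $\widehat{\rho}_x$ is a homomorphism. Expanding bilinearly,
\[
	[\widehat{\rho}_x(X), \widehat{\rho}_x(Y)] = [\rho_x(X), \rho_x(Y)] + [\rho_x(X), Y^*] + [X^*, \rho_x(Y)] + [X^*, Y^*].
\]
Here $[\rho_x(X), \rho_x(Y)] = \rho_x([X,Y])$ by Theorem~\ref{thm:rho-map}(1); the two middle terms equal $[X,Y]^*$ and $-[Y,X]^* = [X,Y]^*$ by Theorem~\ref{thm:rho-map}(3); and $[X^*, Y^*] = -[X,Y]^*$. Collecting the four contributions yields $\rho_x([X,Y]) + [X,Y]^* = \widehat{\rho}_x([X,Y])$, as required. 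For injectivity, note that $\rho_x(X)$ vanishes at $x$ by Theorem~\ref{thm:rho-map}(2), so $\widehat{\rho}_x(X)_x = X^*_x$; if $\widehat{\rho}_x(X) = 0$ then $X^*_x = 0$, and local freeness of the action (available because the orbit is dense) forces $X = 0$. Thus $\widehat{\rho}_x : \g \to \GG(x)$ is an isomorphism of Lie algebras.

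Finally, the module statement is formal: $\HH$ is a Lie algebra and $\GG(x)$ is a Lie subalgebra of it, so the adjoint action $X \cdot Z = [\widehat{\rho}_x(X), Z]$, transported through the isomorphism $\widehat{\rho}_x$, makes $\HH$ into a $\g$-module. Since $\GG(x)$ is closed under the bracket it is a $\g$-submodule, on which the action is exactly the adjoint representation of $\g$ carried over by $\widehat{\rho}_x$; hence $\GG(x) \cong \g$ both as a Lie algebra and as a $\g$-module. The only delicate point is the sign convention linking $\rho_x$ and the $*$-map: with the conventions fixed by Theorem~\ref{thm:rho-map}, the two copies of $[X,Y]^*$ produced by (3) must cancel against $[X^*,Y^*]$, which is precisely what the anti-homomorphism identity supplies.
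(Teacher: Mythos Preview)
The paper states Proposition~\ref{prop:rhox-GG(x)} without proof, presumably because it is a direct bracket computation from the properties packaged in Theorem~\ref{thm:rho-map}; your argument is exactly that computation and is correct. The verification that $\widehat{\rho}_x(X)$ centralizes each $Y^*$, the four-term expansion establishing the homomorphism property, and the injectivity via evaluation at $x$ and local freeness are precisely the expected steps, and your remark about the sign convention (that $X \mapsto X^*$ is an anti-homomorphism for the left action, so the two copies of $[X,Y]^*$ coming from Theorem~\ref{thm:rho-map}(3) cancel correctly against $[X^*,Y^*]$) is the only point requiring any care.
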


It is natural and more useful to consider, for a fixed point $x \in S$, the Lie subalgebra $\GG(x) \subset \HH$ and replace the module structures over $\g$ with corresponding ones over $\GG(x)$. The first result in this direction is the following. We refer to \cite{OQ-U} for its proof.

\begin{proposition}\label{prop:GG(x)andHH0(x)}
    With the hypotheses and notation of Proposition~\ref{prop:rhox-GG(x)}, let $U$ be an open set as in Proposition~\ref{prop:HH-transitivity}. Then, for every $x \in S\cap U$, the following properties hold.
	\begin{enumerate}
		\item The map given by
			\begin{align*}
				\lambda_x : \GG(x)
                &\rightarrow \so(T_x \widetilde{M}) \\
					\lambda_x(Z)(v) &= [Z,V]_x
			\end{align*}
            where $V \in \HH$ so that $V_x = v$, is a well defined representation of $\GG(x)$.
        \item The evaluation map $ev_x : \HH \rightarrow T_x \widetilde{M}$ is homomorphism of $\GG(x)$-modules that maps $\GG(x)$ isomorphically onto $T_x\OO$.
        \item The subspace $T_x \OO^\perp$ is $\GG(x)$-submodule of $T_x\widetilde{M}$.
	\end{enumerate}
    In particular, the subspace defined by $\HH_0(x) = \ker(ev_x)$ is both a Lie subalgebra and a $\GG(x)$-submodule of $\HH$. Furthermore, the sum $\GG(x) + \HH_0(x)$ is direct and a Lie subalgebra of $\HH$ in which $\HH_0(x)$ is an ideal. Hence, $\HH$ is a module over $\GG(x) + \HH_0(x)$ as well.
\end{proposition}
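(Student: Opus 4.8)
My plan is to reduce every assertion to two facts: that each $X^*$ with $X \in \g$ commutes with all of $\HH$, and that $\rho_x(X)$ vanishes at $x$. The first holds because the elements of $\HH$ are $\widetilde{G}$-invariant, so the Lie derivative of any $W \in \HH$ along $X^*$ vanishes, i.e.\ $[X^*, W] = 0$; the second is Theorem~\ref{thm:rho-map}(2). Writing $Z = \widehat{\rho}_x(X) = \rho_x(X) + X^*$, these give for every $W \in \HH$
\[
	[Z, W] = [\rho_x(X), W] + [X^*, W] = [\rho_x(X), W].
\]
Since $\rho_x(X)$ vanishes at $x$, the standard tensoriality argument for vector fields vanishing at a point (recalled in Section~\ref{sec:KillingGromovZimmer}) shows that $[\rho_x(X), W]_x$ depends only on $W_x$. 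Hence $\lambda_x(Z)(v) = [Z, V]_x = [\rho_x(X), V]_x$ is independent of the choice of $V \in \HH$ with $V_x = v$, and by Proposition~\ref{prop:HH-transitivity} such a $V$ exists for every $v \in T_x\widetilde{M}$ when $x \in U$; thus $\lambda_x(Z)$ is well defined on all of $T_x\widetilde{M}$, it coincides with the action of $\rho_x(X) \in \Kill_0(\widetilde{M},x)$ given by the representation of Section~\ref{sec:KillingGromovZimmer}, and it lies in $\so(T_x\widetilde{M})$ because $\rho_x(X)$ is Killing.

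To establish the homomorphism property in~(1) I would fix $Z_1, Z_2 \in \GG(x)$ and $v \in T_x\widetilde{M}$, choose $V \in \HH$ with $V_x = v$, and use that $[Z_2, V] \in \HH$ is a legitimate representative of $\lambda_x(Z_2)(v)$ at $x$ (and symmetrically for $Z_1$). Then $[\lambda_x(Z_1), \lambda_x(Z_2)](v) = [Z_1, [Z_2, V]]_x - [Z_2, [Z_1, V]]_x$, which by the Jacobi identity equals $[[Z_1, Z_2], V]_x = \lambda_x([Z_1, Z_2])(v)$. For~(2), the $\GG(x)$-module structure on $\HH$ is the adjoint one from Proposition~\ref{prop:rhox-GG(x)}, so $ev_x([Z, W]) = [Z, W]_x = \lambda_x(Z)(W_x) = \lambda_x(Z)(ev_x(W))$ exhibits $ev_x$ as $\GG(x)$-equivariant; and since $ev_x(\widehat{\rho}_x(X)) = X^*_x$ while $X \mapsto X^*_x$ identifies $\g$ with $T_x\OO$, the restriction of $ev_x$ to $\GG(x)$ is an isomorphism onto $T_x\OO$. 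For~(3), the identity $\lambda_x(Z) = \lambda_x(\rho_x(X))$ together with the fact, established in Section~\ref{sec:KillingGromovZimmer}, that $T_x\OO^\perp$ is a submodule under $\lambda_x \circ \rho_x$ shows at once that $T_x\OO^\perp$ is $\GG(x)$-invariant.

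The remaining assertions I expect to follow formally. The space $\HH_0(x) = \ker(ev_x)$ is a $\GG(x)$-submodule because $ev_x$ is equivariant, and it is a Lie subalgebra because the bracket of two fields vanishing at $x$ again vanishes there. Directness of $\GG(x) + \HH_0(x)$ is immediate from the injectivity of $ev_x$ on $\GG(x)$, and to see that this sum is a subalgebra in which $\HH_0(x)$ is an ideal I would simply check the three inclusions $[\GG(x), \GG(x)] \subset \GG(x)$, $[\GG(x), \HH_0(x)] \subset \HH_0(x)$ and $[\HH_0(x), \HH_0(x)] \subset \HH_0(x)$, the last two of which already give the ideal property. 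The induced adjoint action then makes $\HH$ a module over $\GG(x) + \HH_0(x)$.

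The only genuinely delicate point is the well-definedness of $\lambda_x$ in~(1). Because $Z$ itself does not vanish at $x$ (indeed $ev_x(Z) = X^*_x$ spans $T_x\OO$), the naive tensoriality argument cannot be applied to $Z$ directly; everything hinges on discarding the summand $X^*$ using $[X^*, W] = 0$ and transferring the computation to $\rho_x(X)$, which does vanish at $x$. Once this reduction and the surjectivity of $ev_x$ from Proposition~\ref{prop:HH-transitivity} are in hand, the rest is bookkeeping with the Jacobi identity and the $\g$-module structures established earlier.
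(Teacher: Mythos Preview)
The paper does not give its own proof of this proposition; it states the result and refers to \cite{OQ-U}. Your argument is correct and is the natural one: the key reduction $[Z,W]=[\rho_x(X),W]$ for $W\in\HH$ (using $[X^*,\HH]=0$) together with $\rho_x(X)_x=0$ is exactly what makes $\lambda_x$ well defined despite $Z$ not vanishing at $x$, and once this identification $\lambda_x(\widehat\rho_x(X))=\lambda_x(\rho_x(X))$ is in hand, parts (2), (3) and the concluding assertions follow from the $\g$-module facts already set up in Section~\ref{sec:KillingGromovZimmer} just as you outline.
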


Note that the last result can be applied to Proposition~\ref{prop:o_O_homomorphisms} so that we can replace the $\g$-module structure with a $\GG(x)$-module structure and obtain the same conclusion. We will use this in the rest of this work.

Through the construction of Proposition~\ref{prop:GG(x)andHH0(x)} we have enlarged the module structure of $\HH$ from $\GG(x) \simeq \g$ to one over $\GG(x) + \HH_0(x)$. One can think of $\HH_0(x)$ as the isotropy for the vector field ``action'' of $\HH$ over $\widetilde{M}$. Hence, the introduction of this module structure paves the way to understand the relationship between $\HH$ and $\widetilde{M}$. In fact, one can show that $T_x \widetilde{M}$ is a module over $\GG(x) + \HH_0(x)$ and that $ev_x$ is a homomorphism of modules over $\GG(x) + \HH_0(x)$. We let the reader see the details in \cite{OQ-U}.

An important piece of information is to figure out the size of the isotropy $\HH_0(x)$. The following result provides a key simplification and shows that we only have to look at the representation on the normal bundle. A proof can be found in \cite{OQ-U}.

\begin{proposition}\label{prop:lambdaperp_on_HH0(x)}
    With the hypotheses and notation of Proposition~\ref{prop:rhox-GG(x)}, let $U$ be an open set as in Proposition~\ref{prop:HH-transitivity}. Let us also assume that the $G$-orbits are non-degenerate submanifolds of the pseudo-Riemannian manifold $M$. Let us denote by $\lambda^\perp_x : \HH_0(x) \rightarrow \so(T_x \OO^\perp)$ the representation of $\lambda_x$ on $\HH_0(x)$ on $T_x\OO^\perp$. Then $\lambda^\perp_x$ is injective.
\end{proposition}

Next we obtain a decomposition of $\HH$ that relates to the decomposition $T_x \widetilde{M} = T_x \OO\oplus T_x \OO^\perp$ in the case of non-degenerate $G$-orbits. For the proof we refer to \cite{OQ-U}.

\begin{proposition}\label{prop:VV(x)}
    With the hypotheses and notation of Proposition~\ref{prop:rhox-GG(x)}, let $U$ be an open set as in Proposition~\ref{prop:HH-transitivity}. Let us also assume that the $G$-orbits are non-degenerate submanifolds of the pseudo-Riemannian manifold $M$. Then, for every $x \in S\cap U$ there is a $\GG(x)$-submodule $\VV(x) \subset \HH$ such that
	\[
		\HH = \GG(x) \oplus \HH_0(x) \oplus \VV(x), \quad
		T_x \OO^\perp = ev_x(\VV(x)).
	\]
\end{proposition}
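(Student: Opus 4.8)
The goal is to decompose $\HH$ as $\GG(x) \oplus \HH_0(x) \oplus \VV(x)$ compatibly with the splitting $T_x\widetilde{M} = T_x\OO \oplus T_x\OO^\perp$ under $ev_x$. The plan is to produce $\VV(x)$ as a $\GG(x)$-module complement that the evaluation map sends \emph{onto} the normal space. I would begin by recording what the earlier propositions already give us as $\GG(x)$-modules. By Proposition~\ref{prop:GG(x)andHH0(x)}, the evaluation map $ev_x : \HH \rightarrow T_x\widetilde{M}$ is a homomorphism of $\GG(x)$-modules carrying $\GG(x)$ isomorphically onto $T_x\OO$, with kernel exactly $\HH_0(x)$; moreover $T_x\OO^\perp$ is a $\GG(x)$-submodule. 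By Proposition~\ref{prop:HH-transitivity} the point $x \in S \cap U$ is chosen so that $ev_x$ is surjective. Thus as $\GG(x)$-modules we have a short exact sequence
\[
	0 \longrightarrow \HH_0(x) \longrightarrow \HH \xrightarrow{\ ev_x\ } T_x\widetilde{M} \longrightarrow 0,
\]
and $T_x\widetilde{M}$ splits $\GG(x)$-invariantly into $T_x\OO \oplus T_x\OO^\perp$.

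Next I would pass to the preimage of the normal space. Set $\WW(x) = ev_x^{-1}(T_x\OO^\perp)$; since $ev_x$ is a $\GG(x)$-module map and $T_x\OO^\perp$ is a submodule, $\WW(x)$ is a $\GG(x)$-submodule of $\HH$ containing $\HH_0(x) = \ker(ev_x)$, and $ev_x$ restricts to a surjection $\WW(x) \rightarrow T_x\OO^\perp$ with kernel $\HH_0(x)$. The task then reduces to finding a $\GG(x)$-invariant complement $\VV(x)$ to $\HH_0(x)$ inside $\WW(x)$: such a $\VV(x)$ will satisfy $ev_x(\VV(x)) = T_x\OO^\perp$ and, together with $\GG(x)$ (which maps isomorphically to $T_x\OO$ and so meets $\WW(x)$ trivially except in $\HH_0(x)$—in fact $\GG(x) \cap \HH_0(x) = 0$ by Proposition~\ref{prop:GG(x)andHH0(x)}), will produce the asserted direct sum $\HH = \GG(x) \oplus \HH_0(x) \oplus \VV(x)$ once we check the dimensions add up. The dimension count is: $\dim\GG(x) = \dim\g = \dim T_x\OO$, and $ev_x$ surjects with kernel $\HH_0(x)$, so $\dim\HH = \dim\HH_0(x) + \dim T_x\widetilde{M} = \dim\HH_0(x) + \dim\GG(x) + \dim T_x\OO^\perp$, matching a complement $\VV(x)$ with $\dim\VV(x) = \dim T_x\OO^\perp$.

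The substantive point—and the main obstacle—is the existence of a $\GG(x)$-invariant complement to the submodule $\HH_0(x)$ inside $\WW(x)$. Since $\GG(x) \simeq \g$ is semisimple, Weyl's complete reducibility theorem guarantees that every $\GG(x)$-submodule of a finite-dimensional $\GG(x)$-module has an invariant complement; applying this to $\HH_0(x) \subset \WW(x)$ yields the desired $\VV(x)$. Here it is essential that $\HH$ is finite dimensional (which holds because $\Kill(\widetilde{M})$ is finite dimensional) and that the module structure is over the semisimple algebra $\GG(x)$, not over the larger $\GG(x) + \HH_0(x)$, whose module structure need not be semisimple. I would then verify the remaining assertion $T_x\OO^\perp = ev_x(\VV(x))$ directly: by construction $ev_x(\VV(x)) \subseteq T_x\OO^\perp$, and since $\VV(x)$ meets $\ker(ev_x) = \HH_0(x)$ trivially, $ev_x|_{\VV(x)}$ is injective, hence an isomorphism onto $T_x\OO^\perp$ by the matching dimensions. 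Finally I would confirm that $\VV(x)$, being a $\GG(x)$-invariant subspace produced by complete reducibility, is indeed a $\GG(x)$-submodule as claimed, completing the proof.
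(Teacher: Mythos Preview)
Your argument is correct: pulling back $T_x\OO^\perp$ under the surjective $\GG(x)$-module map $ev_x$ to get $\WW(x)$, then applying Weyl's complete reducibility to split off $\HH_0(x)$ inside $\WW(x)$, is exactly the natural route, and your dimension count together with $\GG(x)\cap\WW(x)=0$ (which follows from $T_x\OO\cap T_x\OO^\perp=0$ and $\GG(x)\cap\HH_0(x)=0$) gives the direct sum. The paper does not supply its own proof here---it simply refers to \cite{OQ-U}---so there is nothing further to compare; your argument is the standard one and would serve as a self-contained proof.
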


To exploit this constructions we provide additional equivariance properties for the forms $\omega$ and $\Omega$. For this we first recall that there is a canonical isomorphism
\[
	\wedge^2 E \simeq \so(E)
\]
for any $E$ with a non-degenerate (pseudo-)inner product. Such isomorphism holds as vector spaces and as $\so(E)$-modules. In particular, for every $x \in S$, as in Theorem~\ref{thm:rho-map}, we can consider $\Omega_x : \so(T_x\OO^\perp) \rightarrow \g$. Then, the following intertwining properties hold for $\omega_x$ and $\Omega_x$. See \cite{OQ-U} for the proof.

\begin{proposition}\label{prop:o_O_more_intertwining}
    With the hypotheses and notation of Proposition~\ref{prop:rhox-GG(x)}, let $U$ be an open set as in Proposition~\ref{prop:HH-transitivity}. Let us also assume the $G$-orbits are non-degenerate submanifolds of $M$. Then, for every $x \in S\cap U$ the following hold.
	\begin{enumerate}
        \item For every $X \in \g$ and $Y$ smooth vector field on $\widetilde{M}$ we have
		  \[
		  	\omega_x([\rho_x(X), Y]) = [X, \omega_x(Y)].
		  \]
        \item The linear map $\Omega_x : \so(T_x \OO^\perp) \rightarrow \g$ is $\HH_0(x)$-invariant through $\lambda^\perp_x$. In other words, we have
		  \[
		  	\Omega_x([\lambda^\perp_x(Z),T]) = 0
		  \]
            for every $Z \in \HH_0(x)$ and $T \in \so(T_x \OO^\perp)$. In particular, we have
		  \[
			[\lambda^\perp_x(\HH_0(x)), \so(T_x\OO^\perp)]
                \subset \ker(\Omega_x).
		  \]
	\end{enumerate}
\end{proposition}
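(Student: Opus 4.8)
The plan is to treat both identities as consequences of Lie-derivative computations for the $\g$-valued $1$-form $\omega$, evaluated at the point $x$. The observation I would record first is that $\omega$ is completely determined by the two conditions $\omega(W^*) = W$ for $W \in \g$ (immediate from the definitions of $\Psi$ and of the orthogonal projection) together with $\omega|_{T\OO^\perp} = 0$. Since $\omega$ is tensorial in its argument, any identity of $\g$-valued $1$-forms may be verified on the spanning family consisting of the fields $W^*$ and of arbitrary sections of $T\OO^\perp$.

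For part (1), I would compute the Lie derivative $\LL_{\rho_x(X)}\omega$. Because $\rho_x(X) \in \g(x)$ is a Killing field preserving the decomposition $T\widetilde{M} = T\OO\oplus T\OO^\perp$ (as noted after Theorem~\ref{thm:rho-map}), evaluating on $W^*$ and using $\omega(W^*) = W$ (a constant) together with Theorem~\ref{thm:rho-map}(3) gives
\[
(\LL_{\rho_x(X)}\omega)(W^*) = -\,\omega([\rho_x(X),W^*]) = -\,\omega([X,W]^*) = -[X,W],
\]
while on a section $U$ of $T\OO^\perp$ both terms vanish, since $\omega(U)=0$ and $[\rho_x(X),U]$ is again tangent to $T\OO^\perp$. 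Hence $\LL_{\rho_x(X)}\omega = -[X,\omega(\cdot)]$ as an identity of forms. Expanding the left-hand side as $(\LL_{\rho_x(X)}\omega)(Y) = \rho_x(X)(\omega(Y)) - \omega([\rho_x(X),Y])$ and then evaluating at $x$, where $\rho_x(X)$ vanishes by Theorem~\ref{thm:rho-map}(2) so that the directional-derivative term drops, yields exactly $\omega_x([\rho_x(X),Y]) = [X,\omega_x(Y)]$.

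For part (2), I would argue in the same spirit but with $Z \in \HH$. Since elements of $\HH$ commute with the $\widetilde{G}$-action, we have $[Z,W^*]=0$ for all $W \in \g$, so $Z$ preserves $T\OO$ and, being Killing, also $T\OO^\perp$. Checking on $W^*$ and on sections of $T\OO^\perp$ exactly as before now gives $\LL_Z\omega = 0$, whence $\LL_Z(d\omega) = d(\LL_Z\omega) = 0$ and, after restricting to the $Z$-invariant subbundle $\wedge^2 T\OO^\perp$, also $\LL_Z\Omega = 0$. For $Z \in \HH_0(x)$ the field vanishes at $x$, so evaluating $\LL_Z\Omega = 0$ at $x$ leaves only the linearization term:
\[
\Omega_x(\lambda^\perp_x(Z)v_1, v_2) + \Omega_x(v_1, \lambda^\perp_x(Z)v_2) = 0
\]
for all $v_1,v_2 \in T_x\OO^\perp$. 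This says precisely that $\lambda^\perp_x(Z)$, acting as a derivation on $\wedge^2 T_x\OO^\perp$, lies in the kernel of $\Omega_x$; transporting through the $\so(T_x\OO^\perp)$-module isomorphism $\wedge^2 T_x\OO^\perp \simeq \so(T_x\OO^\perp)$ recalled above, this derivation action becomes the bracket $[\lambda^\perp_x(Z),\cdot]$, giving $\Omega_x([\lambda^\perp_x(Z),T]) = 0$ for all $T$, and the inclusion $[\lambda^\perp_x(\HH_0(x)),\so(T_x\OO^\perp)] \subset \ker(\Omega_x)$ follows at once.

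The bilinear bookkeeping is routine; the step that needs genuine care is the evaluation of $\LL_Z\Omega$ at $x$. One must justify that the $\g$-coefficients of $\Omega$ are inert under $\LL_Z$ (they form a constant coefficient space, unrelated to the abstract $\g$-module structure on the target), that the transport term drops solely because $Z_x=0$, and that the surviving contribution is governed by $\lambda^\perp_x(Z)$ rather than by the full linearization $\lambda_x(Z)$ — which is exactly where $Z$ preserving $T\OO^\perp$ enters. Fixing the sign conventions in the passage $\wedge^2 E \simeq \so(E)$ between the derivation action and the adjoint action is the final delicate point.
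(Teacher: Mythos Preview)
Your argument is correct. The paper does not actually supply a proof of this proposition; it defers to \cite{OQ-U}, so there is no in-text proof to compare against directly. That said, the Lie-derivative approach you outline---first establishing $\LL_{\rho_x(X)}\omega = -[X,\omega(\cdot)]$ and $\LL_Z\omega = 0$ for $Z \in \HH$ by testing on the spanning family $\{W^*\}\cup\Gamma(T\OO^\perp)$, then evaluating at $x$ where the relevant field vanishes so that only the linearization survives---is the standard and natural route, and is almost certainly what the cited reference does. The points you flag as delicate (the transport term dropping because $Z_x=0$, the linearization restricting to $\lambda^\perp_x$ because $Z$ preserves $T\OO^\perp$, and the passage from the derivation action on $\wedge^2$ to the adjoint action on $\so$) are exactly the places where care is needed, and your handling of each is sound.
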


\section{The case of $T_x\OO^\perp$ a $\GG(x)$-irreducible module}
In this section we will assume that the for some $x \in S\cap U$, as given in Propositions~\ref{prop:HH-transitivity} and \ref{prop:rhox-GG(x)}, the $\GG(x)$-module structure of $T_x\OO^\perp$ described in Proppsition~\ref{prop:GG(x)andHH0(x)} is non-trivial and irreducible. The non-triviality can be obtained from Proposition~\ref{prop:o_O_homomorphisms} by assuming that $M$ is not a local pseudo-Riemannian product. On the other hand, as we will see in the applications, the irreducibility can be obtained by requiring suitable upper bounds on $\dim(M)$.

\begin{proposition}\label{prop:V0-inTOOperp-dim4}
    With the hypotheses and notation of Proposition~\ref{prop:rhox-GG(x)}, let $x \in S\cap U$ be such that $T_x \OO^\perp$ is a non-trivial irreducible $\GG(x)$-submodule for the structure given in Proposition~\ref{prop:GG(x)andHH0(x)}. Let $\so(T_x\OO^\perp)$ be endowed with the induced $\GG(x)$-module structure. If $V_0$ is the sum of trivial $\GG(x)$-submodules of $\so(T_x\OO^\perp)$, then $\dim(V_0) \leq 4$.
\end{proposition}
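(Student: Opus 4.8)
The plan is to identify $V_0$ with (a subspace of) the commutant of the $\GG(x)$-action and then invoke the real form of Schur's lemma together with Frobenius's theorem. First I would observe that the sum of all trivial $\GG(x)$-submodules of $\so(T_x\OO^\perp)$ is precisely the subspace of $\GG(x)$-fixed vectors, that is, $V_0 = \so(T_x\OO^\perp)^{\GG(x)}$; this is immediate because a trivial submodule is, by definition, one on which $\GG(x)$ acts by zero, so the largest such submodule is the fixed subspace. By the description of the induced module structure, the action of $Z \in \GG(x)$ on $T \in \so(T_x\OO^\perp)$ is $T \mapsto [\lambda_x(Z), T]$, where $\lambda_x$ is the representation of Proposition~\ref{prop:GG(x)andHH0(x)} restricted to $T_x\OO^\perp$. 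Hence $T$ lies in $V_0$ exactly when it commutes with $\lambda_x(\GG(x))$. In other words, $V_0$ is contained in the commutant $D := \End_{\GG(x)}(T_x\OO^\perp)$ of the $\GG(x)$-module $T_x\OO^\perp$.

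Next I would use that $T_x\OO^\perp$ is, by hypothesis, a non-trivial irreducible $\GG(x)$-module, and recall $\GG(x) \simeq \g$ from Proposition~\ref{prop:rhox-GG(x)}. Thus $T_x\OO^\perp$ is a finite-dimensional irreducible real module for a real Lie algebra, and the real version of Schur's lemma applies: the commutant $D = \End_{\GG(x)}(T_x\OO^\perp)$ is a finite-dimensional associative division algebra over $\R$. By Frobenius's theorem the only such algebras are $\R$, $\C$ and $\mathbb{H}$, so $\dim_\R D \in \{1,2,4\}$; in particular $\dim_\R D \leq 4$. Combining the two steps yields $\dim(V_0) \leq \dim_\R D \leq 4$, which is the desired bound.

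I would expect the only delicate point to be the bookkeeping in the first step: one must check both that the ``sum of trivial submodules'' really coincides with the fixed subspace and that the induced action on $\so(T_x\OO^\perp)$ is the commutator action, so that fixed vectors are exactly the $\GG(x)$-intertwiners lying in $\so(T_x\OO^\perp)$. The Schur--Frobenius input in the second step is then entirely standard, and it is here that the non-triviality hypothesis on $T_x\OO^\perp$ is used (to rule out the degenerate situation and to apply irreducibility).

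Finally, I would note that the argument in fact gives a sharper estimate, although this is not needed. The $\g$-invariant metric on $T_x\OO^\perp$ equips $D$ with an adjoint anti-involution $d \mapsto d^{*}$, and under the identification $\so(T_x\OO^\perp) \cong \wedge^2 T_x\OO^\perp$ one sees that $V_0 = D \cap \so(T_x\OO^\perp)$ is exactly the skew part of $D$. Since the identity of $D$ is symmetric, it is excluded from $V_0$, forcing $\dim(V_0) \leq \dim_\R D - 1 \leq 3$. However, the stated bound $\dim(V_0)\le 4$ already follows from the crude inclusion $V_0 \subseteq D$, and this weaker estimate is all that is required in the sequel.
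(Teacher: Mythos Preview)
Your proof is correct and follows essentially the same approach as the paper: both identify $V_0$ with a subspace of the commutant $\End_{\GG(x)}(T_x\OO^\perp)$ via the commutator description of the induced action, and then bound this commutant by $4$. The only cosmetic difference is that the paper obtains the bound by complexifying $T_x\OO^\perp$ and applying the complex Schur lemma (using that $(T_x\OO^\perp)^\C$ is either irreducible or the sum of an irreducible and its conjugate), whereas you invoke the real Schur lemma together with Frobenius's theorem; these are equivalent formulations of the same classical fact, and your extra remark that one can sharpen the bound to $3$ is a pleasant observation not present in the paper.
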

\begin{proof}
    Recall that the $\GG(x)$-module structure on $\so(T_x\OO^\perp)$ is given by the representation $\lambda_x^\perp : \GG(x) \rightarrow \so(T_x\OO^\perp)$ obtained by restricting $\lambda_x$ from Proposition~\ref{prop:GG(x)andHH0(x)} to the $\GG(x)$-submodule $T_x\OO^\perp$.
	
    If $A \in V_0$ is given, then $[\lambda_x^\perp(X),A] = 0$ for all $X \in \GG(x)$. This implies that any such $A$ is a homomorphism of $\GG(x)$-modules for $T_x\OO^\perp$. It is well known (see \cite{Oni}) that, after complexifying, the complex $\GG(x)$-module $(T_x\OO^\perp)^\C$ is either irreducible or the sum of an irreducible module and its conjugate. By Schur's lemma, it follows that the space of homomorphisms of the $\GG(x)$-module $T_x\OO^\perp$ is at most $4$-dimensional. This implies the claim.
\end{proof}

The previous result establishes a bound that together with a low dimension condition on $M$ will let us describe the structure of the centralizer $\HH$ with its Lie subalgebra $\HH_0(x)$.

\section{Local rigidity for analytic manifolds}
In the rest of this section we will assume that the manifold $M$ as well as the $G$-action on it is analytic. In view of Theorem~\ref{thm:GxNcase} we will restrict our attention to the case where the normal bundle $T\OO^\perp$ is non-integrable. In particular, Proposition~\ref{prop:o_O_homomorphisms} implies that the set of points $x \in \widetilde{M}$ for which $\Omega_x = 0$ has measure $0$. This uses the fact that both $M$ and the $G$-action on $M$ are analytic.

Before stating the main result of this section, we will discuss three cases that will satisfy its conditions. The first two were studied in \cite{OQ-SO} and \cite{QuirogaG2}. The third one yields a new result.

\subsection{The group $G = \SO_0(p,q)$ and $\dim(M) \leq (p+q)(p+q+1)/2$}
Let $p,q \geq 1$ be integers such that $p+q \not= 2,4$. We assume that $G = \SO_0(p,q)$ and also that $\dim(M) \leq (p+q)(p+q+1)/2$. Note that these restrictions imply that $\dim(M) < 2 \dim(G)$ and so Proposition~\ref{prop:dimM<2dimG} can be applied to conclude that $TM = T\OO \oplus T\OO^\perp$.

We are assuming that the normal bundle is non-integrable, and so it follows from Proposition~\ref{prop:o_O_homomorphisms} that for almost every point $x \in S\cap U \subset \widetilde{M}$ the module homomorphism $\Omega_x \not= 0$. Furthermore, Proposition~\ref{prop:HH-transitivity} implies that for almost every such $x$ we also have $ev_x(\HH) = T_x \widetilde{M}$. Let us fix any such $x$.

Since $\Omega_x : \so(T_x\OO^\perp) \rightarrow \so(p,q)$ is a non-trivial, and hence surjective, homomorphism of $\GG(x)$-modules (see Proposition~\ref{prop:GG(x)andHH0(x)} and the remarks that follow) we conclude that $T_x\OO^\perp$ is a non-trivial $\GG(x)$-module. In our case, $\GG(x) \simeq \so(p,q)$ and the dimension restriction on $M$ implies that $\dim(T_x\OO^\perp) \leq p+q$. We observe that $\R^{p,q}$ is the lowest non-trivial irreducible representation of $\so(p,q)$. Hence, it follows that $T_x\OO^\perp \simeq \R^{p,q}$ as $\GG(x)$-modules. We recall that this module admits a unique (up to a constant) $\GG(x)$-invariant non-degenerate symmetric bilinear form.

From the previous discussion we conclude that $\so(T_x\OO^\perp)$ is isomorphic to $\so(p,q)$ as $\GG(x)$-modules. Hence, $\Omega_x$ is in fact an isomorphism.

On the other hand, we recall that by Proposition~\ref{prop:lambdaperp_on_HH0(x)} we have a monomorphism $\lambda_x^\perp : \HH_0(x) \rightarrow \so(T_x\OO^\perp)$ whose image is a Lie subalgebra and a $\GG(x)$-submodule where the last structure comes from the homomorphism of Lie algebras $\GG(x) \rightarrow \so(T_x\OO^\perp)$ given by Proposition~\ref{prop:GG(x)andHH0(x)}. By the previous discussion, the latter homomorphism is in fact an isomorphism with $\GG(x) \simeq \so(p,q)$ simple. It follows that $\lambda_x^\perp(\HH_0(x))$ is either $0$ or $\so(T_x\OO^\perp)$. But then, Proposition~\ref{prop:o_O_more_intertwining} implies that $\lambda_x^\perp(\HH_0(x)) = 0$ because $\Omega_x$ is injective. This proves that $\HH_0(x) = 0$.

\begin{lemma}\label{lem:so-and-dim<p+q}
    With the hypotheses and assumptions of this subsection, let $S$ and $U$ be subsets of $\widetilde{M}$ given by Theorem~\ref{thm:rho-map} and Proposition~\ref{prop:HH-transitivity}, respectively. Then, for almost every $x \in S\cap U$ we have
    \begin{enumerate}
      \item $\HH_0(x) = 0$.
      \item $T_x\OO^\perp \simeq \R^{p,q}$ as $\GG(x)$-modules, which is the lowest dimensional non-trivial irreducible module, and admits a unique (up to a constant) $\GG(x)$-invariant non-degenerate symmetric bilinear form.
    \end{enumerate}
    Furthermore, $\HH$ is isomorphic as a Lie algebra to either $\so(p,q+1)$ or $\so(p+1,q)$.
\end{lemma}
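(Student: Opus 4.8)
The first two items are already obtained in the discussion preceding the statement, so the plan is to concentrate on the final isomorphism type of $\HH$. First I would record that, since $\HH_0(x)=0$, the evaluation map $ev_x$ is injective on $\HH$, and Proposition~\ref{prop:VV(x)} then gives a direct sum decomposition $\HH = \GG(x) \oplus \VV(x)$ in which $ev_x$ restricts to an isomorphism $\VV(x) \xrightarrow{\sim} T_x\OO^\perp$. As $\GG(x)$-modules we therefore have $\GG(x)\simeq \g \simeq \so(p,q)$ (the adjoint representation) and $\VV(x)\simeq T_x\OO^\perp \simeq \R^{p,q}$ (the standard representation) by item~(2). A dimension count gives $\dim\HH = \dim\so(p,q) + (p+q) = (p+q)(p+q+1)/2$, which matches $\dim\so(p,q+1)=\dim\so(p+1,q)$; this is the target dimension.

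Next I would pin down the Lie bracket of $\HH$ relative to this decomposition. Since $\GG(x)$ is a subalgebra and $\VV(x)$ is a $\GG(x)$-submodule, we have $[\GG(x),\GG(x)]\subset\GG(x)$ and $[\GG(x),\VV(x)]\subset\VV(x)$, so the only bracket to determine is $[\VV(x),\VV(x)]$, which is a $\GG(x)$-equivariant map $\wedge^2\VV(x)\to\HH$. Under the canonical identification $\wedge^2\R^{p,q}\simeq\so(p,q)$ the source is isomorphic to the adjoint module, and I would invoke Schur's lemma exactly as in Proposition~\ref{prop:V0-inTOOperp-dim4}: for $p+q\geq 5$ the adjoint and standard modules of $\so(p,q)$ are non-isomorphic irreducibles, so no nonzero equivariant map $\wedge^2\VV(x)\to\VV(x)$ exists and the image of the bracket lands in $\GG(x)$, equal to $c\,\mu$ for a scalar $c$, where $\mu:\wedge^2\R^{p,q}\to\so(p,q)$ is the canonical isomorphism. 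In the remaining case $p+q=3$ the two modules coincide and one must rule out a possible $\VV(x)$-component by a separate argument using the Jacobi identity; this is a genuinely special but routine check. Consequently the map $\theta$ that is $+1$ on $\GG(x)$ and $-1$ on $\VV(x)$ is an involutive automorphism, so $(\HH,\GG(x))$ is a symmetric pair.

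The hard part will be to show $c\neq 0$ and thereby exclude the flat case $\HH\simeq\so(p,q)\ltimes\R^{p,q}$. Here I would connect the abstract bracket to the integrability tensor: for $V,W\in\VV(x)$ the element $[V,W]$ lies in $\GG(x)$, so $ev_x([V,W])\in T_x\OO$, and under $ev_x$ and the identifications $\GG(x)\simeq\g$ and $\VV(x)\simeq T_x\OO^\perp$ its image should correspond to $\Omega_x(ev_x(V)\wedge ev_x(W))$. Since the discussion preceding the lemma already shows that non-integrability of $T\OO^\perp$ (Proposition~\ref{prop:o_O_homomorphisms}) together with irreducibility forces $\Omega_x$ to be an isomorphism of $\so(T_x\OO^\perp)\simeq\so(p,q)$ onto $\g$, the coefficient $c$ is nonzero. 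I expect the delicate step to be establishing this proportionality precisely, because the elements of $\VV(x)$ are Killing fields that are tangent to $T\OO^\perp$ only at $x$, so the naive integrability identity $\Omega_x=-\omega_x\circ[\,\cdot\,,\cdot\,]$ must be applied with the correction terms controlled by the intertwining relations of Proposition~\ref{prop:o_O_more_intertwining}.

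Finally I would normalize and identify the algebra. Rescaling $\VV(x)$ by $t\in\R$ multiplies $c$ by $t^2>0$, so the sign of $c$ is an invariant while its magnitude can be set to $1$. The two normalized structures $[v,w]=+\mu(v,w)$ and $[v,w]=-\mu(v,w)$ are exactly the reductive decompositions obtained by adjoining to $\so(p,q)$ a spacelike, respectively a timelike, direction, namely the Cartan-type decompositions of $\so(p+1,q)$ and $\so(p,q+1)$. Comparing the bracket relations term by term then identifies $\HH$ with one of these two Lie algebras, which completes the proof. The conclusion is necessarily an ``either/or'' precisely because the sign of $c$---equivalently the causal type of the adjoined direction---is not fixed by the present hypotheses.
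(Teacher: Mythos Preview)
Your reduction to analyzing $[\VV(x),\VV(x)]$ and the Schur-lemma argument showing it must land in $\GG(x)$ (hence equal $c\,\mu$ for a scalar $c$) are correct and somewhat sharper than what the paper writes out. The divergence---and the genuine gap---is exactly at the step you flag as delicate: showing $c\neq 0$.

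You propose that $\omega_x([V,W]_x)$ equals $\Omega_x(V_x\wedge W_x)$ for $V,W\in\VV(x)$, up to correction terms controlled by Proposition~\ref{prop:o_O_more_intertwining}. But the Cartan formula reads
\[
\Omega_x(V_x,W_x) \;=\; V_x(\omega(W)) - W_x(\omega(V)) - \omega_x([V,W]_x),
\]
and the first two terms are not controlled by that proposition: part~(1) concerns brackets with $\rho_x(X)$ (fields vanishing at $x$), and part~(2) concerns $\HH_0(x)$, which is zero here. Since $V,W$ lie in $T\OO^\perp$ only at the single point $x$, the functions $\omega(V),\omega(W)$ vanish at $x$ but their first derivatives need not. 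Schur's lemma only tells you the bracket map and $\Omega_x$ are proportional as $\GG(x)$-equivariant maps; it does not by itself prevent the proportionality constant from being zero. So as written this is a gap, not a routine verification.

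The paper closes it differently. It assumes $\HH$ is not semisimple, chooses a Levi decomposition $\HH=\mathcal{S}\oplus\rad(\HH)$ with $\mathcal{S}\supset\GG(x)$, and deduces from the $\GG(x)$-module structure that $\mathcal{S}=\GG(x)$ and $\rad(\HH)=\VV(x)$, whence $\HH\simeq\so(p,q)\ltimes\R^{p,q}$. It then invokes a geometric argument from Subsection~3.1 of \cite{OQ-SO}: the resulting local $\wtSO_0(p,q)\ltimes\R^{p,q}$-action near $x$ forces $T\OO^\perp$ to be integrable in a neighborhood, and analyticity propagates this everywhere, contradicting the standing non-integrability hypothesis. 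Once $\HH$ is known to be semisimple, simplicity and the identification with $\so(p+1,q)$ or $\so(p,q+1)$ follow along the lines of your final paragraph (the paper again defers details to \cite{OQ-SO}). Your direct route would be more self-contained if the link to $\Omega_x$ could be nailed down, but the paper's indirect contradiction via the cited geometric lemma is what actually finishes the proof.
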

\begin{proof}
    The first two claims have already been proved above.

    Next, we observe that by Proposition~\ref{prop:VV(x)} we have for almost every $x \in \widetilde{M}$ a decomposition
    \[
        \HH = \GG(x) \oplus \VV(x),
    \]
    where  $\GG(x)$ is Lie subalgebra of $\HH$ isomorphic to $\so(p,q)$ and $\VV(x)$ is a $\GG(x)$-submodule. In other words, we have $[\GG(x), \VV(x)] \subset \VV(x)$.

    If $\HH$ is not semisimple, then there is a semisimple Lie subalgebra $\mathcal{S}$ of $\HH$ containing $\GG(x)$ such that
    \[
        \HH = \mathcal{S} \oplus rad(\HH).
    \]
    Hence, the first decomposition of $\HH$ obtained above proves that $rad(\HH) = \VV(x)$ and $\mathcal{S} = \GG(x)$. This shows that $\HH$ is isomorphic as a Lie algebra to $\so(p,q) \ltimes \R^{p,q}$.

    Recall that $\HH$ is a Lie subalgebra of the Killing Lie algebra of $\widetilde{M}$. Hence, the previous discussion shows that there is a local action of $\wtSO_0(p,q) \ltimes \R^{p,q}$ in a neighborhood of $x$ in $\widetilde{M}$. Then, the arguments from Subsection~3.1 from \cite{OQ-SO} can be used to conclude that $T\OO^\perp$ is integrable in a neighborhood of $x$. By Proposition~\ref{prop:o_O_homomorphisms} it follows that $\Omega$ vanishes in a neighborhood of $x$ and so, by analyticity, it vanishes in all of $\widetilde{M}$. This implies that $T\OO^\perp$ is everywhere integrable which is a contradiction.

    It follows that $\HH$ is semisimple and the decomposition $\HH = \GG(x) \oplus \VV(x)$ together with the relation $[\GG(x), \VV(x)] \subset \VV(x)$ is easily seen to imply that $\HH$ is simple. From this it is easily seen that $\HH$ has precisely the required isomorphism type. The details can be found in \cite{OQ-SO}.
\end{proof}

\subsection{The group $G = G_{2(2)}$ and $\dim(M) \leq 21$}
Let us now assume that $G$ is the non-compact simply connected Lie group of type $G_2$ over $\R$ and that $\dim(M) \leq 21$. Hence, $T\OO^\perp$ is again a direct summand which we are assuming to be non-integrable. With the same notation as in the previous case, for almost every $x \in S\cap U$ the map $\Omega_x \not= 0$, and $ev_x(\HH) = T_x \widetilde{M}$. We fix $x$ with these properties.

Then, $T_x \OO^\perp$ is an non-trivial $\GG(x)$-module with dimension at most $7$. Since $\GG(x) \simeq \g_{2(2)}$, which is split of type $\g_2$, it follows from Weyl's dimension formula that $T_x \OO^\perp$ is isomorphic to the $7$-dimensional irreducible $\g_{2(2)}$-module, which is the lowest dimensional. We recall that, since $\g_{2(2)}$ is split, $T_x\OO^\perp$ admits a unique (up to a constant) $\GG(x)$-invariant non-degenerate symmetric bilinear form.

Since $\Omega_x : \so(T_x\OO^\perp) \rightarrow \g_{2(2)}$ is a non-zero homomorphism of $\GG(x)$-modules, it follows that $\so(T_x\OO^\perp)$ contains a submodule isomorphic to $\g_{2(2)}$. Let us write
\[
    \so(T_x\OO^\perp) \simeq \g_{2(2)} \oplus W,
\]
for some $\GG(x)$-submodule of $\so(T_x\OO^\perp)$. In particular, $\dim(W) = 7$ and so Proposition~\ref{prop:V0-inTOOperp-dim4} and Weyl's formula again imply that $W$ is isomorphic to the $7$-dimensional $\g_{2(2)}$-module. We recall that $\lambda_x^\perp : \GG(x) \rightarrow \so(T_x\OO^\perp)$ is injective, and so the previous description shows that $\lambda_x^\perp(\GG(x))$ is the unique $\GG(x)$-submodule of $\so(T_x\OO^\perp)$ isomorphic to $\g_{2(2)}$. In other words, we have the decomposition
\[
    \so(T_x\OO^\perp) = \lambda_x^\perp(\GG(x)) \oplus W.
\]

Since the $\GG(x)$-module structure is given by the Lie brackets of $\so(T_x\OO^\perp)$ it follows that the map $\wedge^2 W \rightarrow \so(T_x\OO^\perp)$ defined by the Lie brackets is a homomorphism of $\GG(x)$-modules. Hence, the only possibilities for $[W,W]$ are to be $0$, $W$, $\lambda_x^\perp(\GG(x))$ or $\so(T_x\OO^\perp)$. The first and second imply that $W$ is a proper ideal of $\so(T_x\OO^\perp)$, which is a contradiction. The third one yields a symmetric pair of the form $(\so(3,4),\g_{2(2)})$ which does not exist (see \cite{Berger}). Hence, we have $[W,W] = \so(T_x\OO^\perp)$.

On the other hand, Proposition~\ref{prop:lambdaperp_on_HH0(x)} shows that $\lambda_x^\perp$ realizes $\HH_0(x)$ as a $\GG(x)$-submodule of $\so(T_x\OO^\perp)$ which is also a Lie subalgebra of $\so(T_x\OO^\perp)$. From the previous discussion the only possibilities for the image of $\HH_0(x)$ under $\lambda_x^\perp$ are $0$, $\lambda_x^\perp(\GG(x))$ and $\so(T_x\OO^\perp)$. Hence, Proposition~\ref{prop:o_O_more_intertwining} implies that $\HH_0(x) = 0$ because the kernel of $\Omega_x$ is $W$.

\begin{lemma}\label{lem:g22-and-dim<21}
    With the hypotheses and assumptions of this subsection, let $S$ and $U$ be subsets of $\widetilde{M}$ given by Theorem~\ref{thm:rho-map} and Proposition~\ref{prop:HH-transitivity}, respectively. Then, for almost every $x \in S\cap U$ we have
    \begin{enumerate}
      \item $\HH_0(x) = 0$.
      \item $T_x\OO^\perp$ is a non-trivial, irreducible and $7$-dimensional $\GG(x)$-module, and so it is the lowest dimensional non-trivial irreducible $\g_{2(2)}$-module. Also, $T_x\OO^\perp$ admits a unique (up to a constant) $\GG(x)$-invariant non-degenerate symmetric bilinear form.
    \end{enumerate}
    Furthermore, $\HH$ is isomorphic as a Lie algebra to $\so(3,4)$.
\end{lemma}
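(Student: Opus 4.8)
The plan is to establish the remaining assertion, that $\HH \cong \so(3,4)$, since parts (1) and (2) have already been obtained in the discussion above. I would start from the structural decomposition: because $\HH_0(x) = 0$, Proposition~\ref{prop:VV(x)} gives
\[
    \HH = \GG(x) \oplus \VV(x),
\]
where $\GG(x) \cong \g_{2(2)}$ is a Lie subalgebra and $\VV(x)$ is a $\GG(x)$-submodule with $ev_x(\VV(x)) = T_x\OO^\perp$. Since $\ker(ev_x) = \HH_0(x) = 0$, the evaluation restricts to a $\GG(x)$-module isomorphism $\VV(x) \cong T_x\OO^\perp$, so by part (2) the module $\VV(x)$ is the $7$-dimensional irreducible representation and $\dim(\HH) = 21$. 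The key bookkeeping fact, used repeatedly, is that $\HH \cong \g_{2(2)} \oplus \R^7$ as a $\GG(x)$-module is a sum of two non-isomorphic irreducibles, so its only $\GG(x)$-submodules are $0$, $\GG(x)$, $\VV(x)$ and $\HH$; moreover every subspace produced by the bracket is again a $\GG(x)$-submodule.

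I would next rule out that $\HH$ is non-semisimple. If it were, a Levi decomposition $\HH = \mathcal{S} \oplus \rad(\HH)$ with $\GG(x) \subset \mathcal{S}$ would present $\rad(\HH)$ as a proper nonzero solvable $\GG(x)$-submodule; by the submodule list, and since $\GG(x)$ is not solvable, this forces $\rad(\HH) = \VV(x)$ and $\mathcal{S} = \GG(x)$, whence $\HH \cong \g_{2(2)} \ltimes \R^7$. As $\HH$ consists of Killing fields of $\widetilde{M}$, this would yield a local action of a cover of $\Gex \ltimes \R^7$ near $x$, and I would invoke the argument of Subsection~3.1 of \cite{OQ-SO} (adapted as in \cite{QuirogaG2}) to deduce that $T\OO^\perp$ is integrable on a neighborhood of $x$; then $\Omega$ vanishes there, and by analyticity $\Omega \equiv 0$ on $\widetilde{M}$, contradicting the non-integrability of $T\OO^\perp$. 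I expect this to be the main obstacle, as it is the only step that genuinely uses the geometry (the standing non-integrability hypothesis) rather than pure representation theory, and making the passage from the semidirect-product symmetry to local integrability precise is the delicate point.

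With $\HH$ semisimple, the submodule list upgrades this to simplicity. Any proper nonzero ideal is a $\GG(x)$-submodule, hence equals $\GG(x)$ or $\VV(x)$; but $\GG(x)$ cannot be an ideal, for then $[\VV(x),\GG(x)] \subset \GG(x) \cap \VV(x) = 0$ would make $\VV(x)$ a trivial module, which it is not; and $\VV(x)$ cannot be an ideal, for a semisimple ideal of dimension $7$ would be a $7$-dimensional simple Lie algebra, of which there are none. Hence $\HH$ is simple of dimension $21$, so $\HH^\C$ is $\so(7,\C)$ or $\spi(6,\C)$. I would discard $\spi(6,\C)$ because its faithful $6$-dimensional standard representation would restrict to a $6$-dimensional representation of $\g_{2(2)}$, forcing that subalgebra to act trivially since its smallest nontrivial representation has dimension $7$; this contradicts faithfulness. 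Therefore $\HH^\C \cong \so(7,\C)$ and $\HH \cong \so(p,q)$ for some $p+q = 7$.

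To pin down the signature, I would use that $\GG(x) \cong \g_{2(2)}$ acts faithfully on the standard module $\R^{p,q}$ of $\HH \cong \so(p,q)$. A faithful $7$-dimensional representation of $\g_{2(2)}$ must be its $7$-dimensional irreducible one, which preserves a non-degenerate symmetric form that is unique up to scale; for the split form this form has signature $(3,4)$. Since the ambient form of signature $(p,q)$ on $\R^{p,q}$ is proportional to it, we obtain $(p,q) = (3,4)$ and hence $\HH \cong \so(3,4)$, as claimed. Here the remaining work is routine representation theory once the non-semisimple case has been excluded, so the crux of the argument really lies in that exclusion.
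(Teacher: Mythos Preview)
Your proof is correct and follows essentially the same approach as the paper's: both use the decomposition $\HH = \GG(x) \oplus \VV(x)$, exclude the non-semisimple case via the integrability contradiction (invoking the argument from \cite{OQ-SO}), and then identify the resulting $21$-dimensional simple Lie algebra. The paper simply defers the final identification to \cite{QuirogaG2}, whereas you spell out the elimination of $\spi(6,\C)$ and the signature determination explicitly.
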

\begin{proof}
    Again, it only remains to obtain the isomorphism type of $\HH$. For this we fix $x$ satisfying the first two claims.

    As before we have a decomposition
    \[
        \HH = \GG(x) \oplus \VV(x),
    \]
    with $\GG(x)$ a Lie subalgebra isomorphic to $\g_{2(2)}$ and $\VV(x)$ a $\GG(x)$-submodule isomorphic to the only $7$-dimensional irreducible $\g_{2(2)}$-module.

    We consider a decomposition
    \[
        \HH = \mathcal{S} \oplus rad(\HH),
    \]
    with $\mathcal{S} \supset \GG(x)$. As in the proof of Lemma~\ref{lem:so-and-dim<p+q}, one can prove that assuming that $\HH$ is not semisimple implies that the normal bundle $T\OO^\perp$ is integrable.

    Hence, we conclude that $\HH$ is semisimple, and we already have
    \[
        [\GG(x), \VV(x)] \subset \VV(x).
    \]
    It is easy to see from this (see \cite{QuirogaG2}) that $\HH$ is (real) simple Lie algebra with dimension $21$ which is in fact isomorphic to $\so(3,4)$.
\end{proof}

\subsection{Non-compact real forms of type $F_4$ and $\dim(M) \leq 78$}
Let us now assume that $G$ is a simply connected non-compact Lie group of type $F_4$ over $\R$. Recall that there are exactly two such groups up to isomorphism (see \cite{Helgason}). We will also assume that $\dim(M) \leq 78$. Since $\dim(G) = 52$, we conclude from Proposition~\ref{prop:dimM<2dimG} that $T\OO^\perp$ is direct summand of $T\widetilde{M}$ with rank at most $26$, which we are assuming to be non-integrable.

With the same notation as in the previous subsections, we note that for almost every $x \in S \cap U$ the map $\Omega_x : \so(T_x\OO^\perp) \rightarrow \g$ is surjective. Hence, there is a $\GG(x)$-submodule $W$ of $\so(T_x\OO^\perp)$ so that
\[
    \so(T_x\OO^\perp) \simeq \g \oplus W.
\]
Note that $\dim(T_x\OO^\perp) \leq 26$ implies that $\dim(\so(T_x\OO^\perp)) \leq 325$ and $\dim(W) \leq 273$.

On the other hand, it is known that every irreducible module, over a non-compact real Lie algebra of type $F_4$, remains irreducible after it is complexified (see \cite{Oni}). Hence, one can use (complex) Weyl's dimension formula to compute the dimensions of the irreducible $\g$-modules. From this it follows that, below dimension $273$, there is exactly one irreducible non-trivial $\g$-module for each of the dimensions $26$, $52$ and $273$.

We conclude that $T_x\OO^\perp$ is necessarily the irreducible $\g$-module of dimension $26$. By \cite{Oni}, every irreducible module over a non-compact form of type $F_4$ over $\R$ complexifies to an irreducible module. This implies that $T_x\OO^\perp$ admits a unique (up to a constant) $\GG(x)$-invariant non-degenerate symmetric bilinear form.

Hence, $\so(T_x\OO^\perp)$ and $W$ have dimensions exactly $325$ and $273$. Also, by the above remark on the low dimensional $\g$-modules, if $W$ is not irreducible, then it is the direct sum of $k_0$ trivial modules, $k_1$ irreducible modules with dimension $26$ and $k_2$ irreducible modules with dimension $52$, so that we have
\[
    273 = k_0 + (k_1 + 2k_2)26.
\]
In view of Proposition~\ref{prop:V0-inTOOperp-dim4} we have $k_0 \leq 4$ and so the previous identity of non-negative integers is impossible. Hence, $W$ is the $273$-dimensional irreducible $\GG(x)$-module.

The same arguments used in the previous subsections show that $\lambda_x^\perp(\GG(x))$ is the unique $\GG(x)$-submodule of $\so(T_x\OO^\perp)$ isomorphic to $\g$ and the decomposition
\[
    \so(T_x\OO^\perp) = \lambda_x^\perp(\GG(x)) \oplus W
\]
exhibits the left hand-side as a sum of two irreducible $\GG(x)$-modules, the first one isomorphic to $\GG(x)$ itself and the second one with dimension $273$.

A similar reasoning as above shows that $\HH_0(x)$ is either $0$ or its image under $\lambda_x^\perp$ is all of $\so(T_x\OO^\perp)$. But latter contradicts Proposition~\ref{prop:o_O_more_intertwining} since the kernel of $\Omega_x$ is $W$. And so we conclude that $\HH_0(x) = 0$.

\begin{lemma}\label{lem:typeF-and-dim<78}
    With the hypotheses and assumptions of this subsection, let $S$ and $U$ be subsets of $\widetilde{M}$ given by Theorem~\ref{thm:rho-map} and Proposition~\ref{prop:HH-transitivity}, respectively. Then, for almost every $x \in S\cap U$ we have
    \begin{enumerate}
      \item $\HH_0(x) = 0$.
      \item $T_x\OO^\perp$ is a non-trivial, irreducible and $26$-dimensional $\GG(x)$-module, and so it is the lowest dimensional non-trivial irreducible $\g$-module. Also, $T_x\OO^\perp$ admits a unique (up to a constant) $\GG(x)$-invariant non-degenerate symmetric bilinear form.
    \end{enumerate}
    Furthermore, $\HH$ is isomorphic as a Lie algebra to a non-compact simple real Lie algebra of type $E_6$ over $\R$.
\end{lemma}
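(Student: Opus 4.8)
The first two claims having been established in the discussion preceding the statement, the plan is to focus entirely on the isomorphism type of $\HH$, following the pattern of Lemmas~\ref{lem:so-and-dim<p+q} and \ref{lem:g22-and-dim<21}. First I would fix a point $x \in S\cap U$ for which claims (1) and (2) hold and invoke Proposition~\ref{prop:VV(x)}: since $\HH_0(x)=0$, the decomposition there collapses to $\HH = \GG(x)\oplus\VV(x)$, where $\GG(x)\simeq\g$ is of type $F_4$ and $ev_x$ maps $\VV(x)$ isomorphically onto $T_x\OO^\perp$, so that $\VV(x)$ is isomorphic as a $\GG(x)$-module to the $26$-dimensional irreducible. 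In particular $\dim(\HH)=52+26=78$ and $[\GG(x),\VV(x)]\subset\VV(x)$.

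The next step is to rule out that $\HH$ fails to be semisimple. As in the two previous lemmas, a Levi decomposition $\HH = \mathcal{S}\oplus\rad(\HH)$ with $\GG(x)\subset\mathcal{S}$, compared with $\HH=\GG(x)\oplus\VV(x)$, would force $\rad(\HH)=\VV(x)$ and $\mathcal{S}=\GG(x)$, exhibiting $\HH$ as $\g\ltimes\R^{26}$. This produces a local action of $\widetilde{G}\ltimes\R^{26}$ in a neighborhood of $x$ in $\widetilde{M}$, and the integrability argument of Subsection~3.1 of \cite{OQ-SO} then shows $T\OO^\perp$ is integrable near $x$; by analyticity and Proposition~\ref{prop:o_O_homomorphisms} this forces $\Omega\equiv 0$ on all of $\widetilde{M}$, contradicting non-integrability. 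Hence $\HH$ is semisimple.

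With $\HH$ semisimple, the irreducibility of $\VV(x)$ together with $[\GG(x),\VV(x)]\subset\VV(x)$ forces $\HH$ to be simple: the only $\GG(x)$-submodules of $\HH$ are $0$, $\GG(x)$, $\VV(x)$ and $\HH$, and a short check shows none of the proper ones is an ideal. To pin down the type I would complexify and use the $\GG(x)$-module structure rather than the dimension alone, since among the complex simple Lie algebras of dimension $78$ — namely $\mathfrak{so}(13)$, $\mathfrak{sp}(12)$ and $\mathfrak{e}_6$ — only the last contains a subalgebra of type $F_4$ whose complement is the $26$-dimensional irreducible. The decomposition $\wedge^2 V_{26}\simeq\mathfrak{f}_4\oplus V_{273}$ already computed for $\so(T_x\OO^\perp)$ (dimensions $325=52+273$) shows $V_{26}$ does not occur in $\wedge^2 V_{26}$, so by Schur's lemma the bracket map $\wedge^2\VV(x)\to\HH$ has image in $\GG(x)$; thus $[\VV(x),\VV(x)]\subset\GG(x)$ and $(\HH,\GG(x))$ is a symmetric pair with isotropy $V_{26}$. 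Complexifying gives $\HH^\C\simeq\mathfrak{e}_6^\C$, so $\HH$ is a real form of $\mathfrak{e}_6$, and because it contains the non-compact real form $\g$ of $F_4$ it is not the compact form; hence it is a non-compact simple real Lie algebra of type $E_6$.

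The main obstacle I anticipate is this final identification. Dimension $78$ does not by itself determine the type, since $\mathfrak{so}(13)$ and $\mathfrak{sp}(12)$ share it, so the argument genuinely needs the $F_4$-module structure of the complement; this is why the irreducibility in claim (2) and the computation of $\wedge^2 V_{26}$ are indispensable. Making this rigorous uniformly for both non-compact real forms of $F_4$ — so that the conclusion covers $F_{4(4)}$ and $F_{4(-20)}$ at once — relies on the classification of symmetric pairs in the spirit of \cite{Berger} and on the fact from \cite{Oni} that irreducible modules of the real form stay irreducible upon complexification, which is what legitimizes working over $\C$. The integrability step used to discard the non-semisimple case, while routine by reference to \cite{OQ-SO}, is the other point requiring care: one must confirm that the explicit local model $\widetilde{G}\ltimes\R^{26}$ really does force $\Omega$ to vanish near $x$.
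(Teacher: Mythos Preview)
Your approach tracks the paper's closely through the semisimplicity and simplicity of $\HH$. The divergence is in the final identification. The paper's argument is shorter and avoids symmetric pairs entirely: since $\GG(x)\simeq\g$ sits inside $\HH$, complexifying embeds $\mathfrak{f}_4^\C$ into $\HH^\C$; but both $\so(13,\C)$ and $\spi(12,\C)$ carry faithful representations on $\C^{13}$ (respectively $\C^{12}$), whereas the smallest nontrivial $\mathfrak{f}_4^\C$-module has dimension $26$, so neither can contain a copy of $\mathfrak{f}_4^\C$ and hence $\HH^\C\simeq\mathfrak{e}_6^\C$. Your route through $[\VV(x),\VV(x)]\subset\GG(x)$ and the symmetric pair classification is correct and yields extra structural information (the paper in fact flags the appearance of symmetric pairs as a phenomenon worth investigating in its final section), but for the bare identification it is more machinery than needed and requires an appeal to Berger's tables rather than the one-line representation-theoretic obstruction.

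One genuine gap: before listing the complex simple Lie algebras of dimension $78$ you tacitly assume that $\HH^\C$ is simple. A simple real Lie algebra can instead be the underlying real Lie algebra of a complex simple Lie algebra $\mathfrak{l}$, in which case $\HH^\C\simeq\mathfrak{l}\oplus\overline{\mathfrak{l}}$ is not simple and $\dim_\C\mathfrak{l}=39$. The paper explicitly rules this out by noting that no complex simple Lie algebra has dimension $39$; you should insert that check before passing to the three candidates.
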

\begin{proof}
    Note that we only need to prove the required isomorphism class for $\HH$, for which we fix $x$ so that the first two claims are satisfied.

    By Proposition~\ref{prop:VV(x)} we have a decomposition
    \[
        \HH = \GG(x) \oplus \VV(x),
    \]
    where $\GG(x)$ is a Lie subalgebra of $\HH$ isomorphic to $\g$ (currently a non-compact simple Lie algebra of type $F_4$ over $\R$) and $\VV(x)$ isomorphic to the $26$-dimensional irreducible $\g$-module.

    Then, we can use the arguments from the previous subsections to show that $\HH$ is a simple Lie algebra, and we already know that $\HH$ has dimension $78$.

    First, we observe that there is no simple complex Lie algebra with dimension $39$ (see \cite{Helgason}) and so $\HH$ is a real form of a simple complex Lie algebra with dimension $78$.

    Second, we note that the only simple complex Lie algebras with dimension $78$ are precisely $\so(13,\C)$, $\spi(12,\C)$ and $\mathfrak{e}_6^\C$. Hence, the previous argument shows that $\HH^\C$ is isomorphic to one of these $78$-dimensional complex Lie algebras. Since $\GG(x)$ is a Lie subalgebra of $\HH$ isomorphic to $\g$, the first two possibilities imply the existence of a non-trivial representation $\mathfrak{f}_4^\C \rightarrow \mathfrak{gl}(13,\C)$, which is impossible since the lowest dimension where $\mathfrak{f}_4^\C$ has a non-trivial representation is $26$.

    We conclude that $\HH^\C \simeq \mathfrak{e}_6^\C$ and so $\HH$ is a non-compact simple real Lie algebra of type $E_6$ over $\R$.
\end{proof}

\subsection{A geometric local rigidity theorem}

For the following result an important assumption is that the centralizer $\HH$ has Lie subalgebra $\HH_0(x) = 0$ for suitable points $x$. We also assume that for such points $x$, the $\GG(x)$-module $T_x\OO^\perp$ is irreducible. Under these conditions it is possible to provide a geometric description of the pseudo-Riemannian metric on $M$. Although the required hypotheses might seem restrictive, the previous subsections provide examples where they are satisfied and thus yield local rigidity conclusions from more elementary assumptions.

\begin{theorem}[Local rigidity of $G$-actions]
\label{thm:local-rigidity-general}
    Let $G$ be a connected non-compact simple Lie group and let $M$ be a finite volume analytic pseudo-Riemannian manifold. Suppose that there is an analytic and isometric $G$-action on $M$ with a dense orbit and that $\dim(M) < 2 \dim(G)$. Let $\HH$ be the centralizer in $\Kill(\widetilde{M})$ of the $\widetilde{G}$-action on $\widetilde{M}$ and let $H$ be a connected Lie group whose Lie algebra is $\HH$. With the notation of Theorem~\ref{thm:rho-map} and Proposition~\ref{prop:HH-transitivity}, assume that for almost every $x \in S\cap U$ the following holds
    \begin{enumerate}
      \item $\HH_0(x) = 0$.
      \item $T_x\OO^\perp$ is an irreducible $\GG(x)$-module for which there exists a unique (up to a constant) $\GG(x)$-invariant non-degenerate symmetric bilinear form.
    \end{enumerate}
    Let us also assume that $\HH$ is a simple Lie algebra and that both $\HH$ and $\g$ have simple complexifications.

    Then, the pseudo-Riemannian metric of $M$ can be rescaled on the bundles tangent and normal to the $G$-orbits to obtain a new $G$-invariant pseudo-Riemannian metric $\widehat{g}$ so that $(M,\widehat{g})$ is locally isometric to $H$ endowed with a bi-invariant pseudo-Riemannian metric. Hence, $M$ admits a $G$-invariant locally symmetric pseudo-Riemannian metric locally isometric to a bi-invariant metric on $H$.
\end{theorem}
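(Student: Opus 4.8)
The plan is to use the vanishing of the isotropy $\HH_0(x)$ to realize $\widetilde M$ locally as the group $H$, and then to transport the metric to a symmetric bilinear form on $\HH$ which can be matched, after a block rescaling, with the Killing form of $\HH$. First I would observe that by Proposition~\ref{prop:HH-transitivity} the map $ev_x$ is onto, and together with the hypothesis $\HH_0(x)=\ker(ev_x)=0$ this makes $ev_x\colon\HH\to T_x\widetilde M$ a linear isomorphism which, by Proposition~\ref{prop:VV(x)}, carries $\GG(x)$ onto $T_x\OO$ and $\VV(x)$ onto $T_x\OO^\perp$. Integrating the Killing fields of $\HH$ yields a local action of $H$ whose orbit map at $x$ is a local diffeomorphism onto a neighborhood of $x$; thus a neighborhood of $x$ in $\widetilde M$ is identified with an open set of $H$, under which the elements of $\HH$ become the fundamental vector fields of the (say, left) $H$-action. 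Pulling back the metric along $ev_x$ then gives a symmetric bilinear form $\beta=ev_x^*\langle\cdot,\cdot\rangle$ on $\HH$, and the problem is reduced to analyzing this form algebraically.

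Next I would study $\beta$ in relation to $B_\HH$, the Killing form of $\HH$. Since the elements of $\HH$ are $\widetilde G$-invariant Killing fields, they preserve $T\OO$ and hence the orthogonal splitting $T\widetilde M=T\OO\oplus T\OO^\perp$; therefore $\beta$ is block-diagonal for the decomposition $\HH=\GG(x)\oplus\VV(x)$, and it is $\GG(x)$-invariant because by Proposition~\ref{prop:GG(x)andHH0(x)} the bracket action of $\GG(x)$ on $\HH$ corresponds through $ev_x$ to $\lambda_x$ on $T_x\widetilde M$. On $\GG(x)\simeq\g$ the block of $\beta$ is the bi-invariant orbit metric of Corollary~\ref{cor:metric_orbits}, and on $\VV(x)\simeq T_x\OO^\perp$ it is a $\GG(x)$-invariant non-degenerate symmetric form. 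To compare with $B_\HH$, note that since $\GG(x)$ is simple, $B_\HH|_{\GG(x)}$ is the trace form of the faithful representation $\operatorname{ad}_\HH|_{\GG(x)}$ and hence non-degenerate, so $\HH=\GG(x)\oplus\GG(x)^{\perp}$ with respect to $B_\HH$. The hypothesis $\dim(M)<2\dim(G)$ forces $\dim\VV(x)=\dim(M)-\dim(G)<\dim(G)=\dim\GG(x)$, so the irreducible module $\VV(x)$ cannot be isomorphic to the adjoint module $\GG(x)$; as $B_\HH$ restricts to a $\GG(x)$-invariant pairing between them it vanishes there, whence $\VV(x)=\GG(x)^{\perp}$ and $B_\HH$ is block-diagonal for the same splitting.

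With both $\beta$ and $B_\HH$ block-diagonal I would match them block by block. On $\GG(x)$, the assumption that $\g$ has simple complexification makes the space of $\operatorname{ad}$-invariant symmetric forms one-dimensional, so $\beta|_{\GG(x)}$ and $B_\HH|_{\GG(x)}$ are both multiples of $B_\g$ and hence proportional; on $\VV(x)$, hypothesis (2) gives uniqueness up to a constant of the invariant form, and $B_\HH|_{\VV(x)}$ is non-degenerate because both $B_\HH$ and its $\GG(x)$-block are, so $\beta|_{\VV(x)}$ and $B_\HH|_{\VV(x)}$ are proportional as well. Rescaling the metric by constants $a$ on $T\OO$ and $b$ on $T\OO^\perp$ multiplies the two blocks of $\beta$ independently, so for a suitable choice of nonzero $a,b$ the rescaled metric $\widehat g$ satisfies $ev_x^*\widehat g=B_\HH$. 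These constants are independent of $x$, since the relevant proportionality ratios are algebraic invariants of the conjugate pairs $(\HH,\GG(x))$ and the orbit metric is globally constant, so $\widehat g$ is a well-defined $G$-invariant metric on $M$.

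Finally I would deduce bi-invariance and local symmetry. Because the rescaling is by constants on two $\widetilde G$-invariant orthogonal subbundles that are preserved by every $Z\in\HH$, a flow argument shows each such $Z$ remains Killing for $\widehat g$; hence under the local identification with $H$ the metric $\widehat g$ is invariant under the local $H$-action, i.e.\ left-invariant, with value $B_\HH$ at the identity. Since $B_\HH$ is $\Ad$-invariant, this left-invariant metric is in fact bi-invariant, so $(M,\widehat g)$ is locally isometric to $H$ with the bi-invariant metric determined by $B_\HH$, and such metrics are locally symmetric, which gives the final assertion. I expect the main obstacle to be the block-matching step: proving that the orbit metric and the two Killing-form restrictions are simultaneously proportional on the splitting $\HH=\GG(x)\oplus\VV(x)$. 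This is precisely where the dimension bound is needed (to force the $B_\HH$-orthogonality $\VV(x)=\GG(x)^{\perp}$) and where the simplicity of the complexifications of $\g$ and $\HH$ is needed (to guarantee that the invariant symmetric forms are unique up to a real scalar, so that a single constant rescaling on each block suffices).
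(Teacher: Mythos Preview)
Your argument follows essentially the same route as the paper's: pick a good point $x\in S\cap U$, use $\HH_0(x)=0$ together with transitivity to make the orbit map of the local $\widetilde H$-action a local diffeomorphism near $x$, compare the pulled-back metric with the Killing form $B_\HH$ block by block on $\GG(x)\oplus\VV(x)$, and rescale. Your justification that $B_\HH$ is itself block-diagonal (via non-isomorphism of the irreducibles $\GG(x)$ and $\VV(x)$, forced by the dimension bound) is in fact more explicit than what the paper writes.

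There is, however, a genuine gap in your last paragraph. Your argument establishes that $(M,\widehat g)$ is locally isometric to $(H,B_\HH)$ only in a neighborhood of each good point $x$; these points form an open dense conull subset of $\widetilde M$, and your remark that the rescaling constants are independent of $x$ does not improve that. The theorem asserts the local isometry, and hence the locally symmetric conclusion, at \emph{every} point of $M$. The paper closes this gap by invoking analyticity: the rescaled metric $\widehat g$ is analytic (it is a constant rescaling of the analytic metric $g$ along analytic subbundles), so the tensorial identity $\nabla R=0$, which holds on the nonempty open set where the local isometry to the symmetric space $H$ has been established, propagates to all of $M$. Once $(M,\widehat g)$ is everywhere locally symmetric, connectedness yields local homogeneity (via reflections at midpoints of broken geodesics), and hence the local isometry to $(H,B_\HH)$ at every point. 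You should insert this analytic-continuation step; without it your conclusion is only known on an open dense set. Incidentally, this also makes your discussion of the $x$-independence of the constants unnecessary: a single fixed $x$ already determines $\widehat g$ globally, and analyticity does the rest.
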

\begin{proof}
    Choose some $x \in S\cap U$ for which the conditions in the hypotheses hold. Hence, Proposition~\ref{prop:VV(x)} implies that
    \[
        \HH = \GG(x) \oplus \VV(x)
    \]
    with the evaluation map $ev_x$ at $x$ mapping isomorphically $\GG(x)$ and $\VV(x)$ onto $T_x\OO$ and $T_x\OO^\perp$, respectively.

    Let us consider a local action of $\widetilde{H}$ on $\widetilde{M}$ induced from the fact that $\HH$ is a Lie subalgebra of $\Kill(\widetilde{M})$. This local action commutes with the $\widetilde{G}$-action because $\HH$ centralizes the latter. In particular, the local $\widetilde{H}$-action preserves the decomposition $T\widetilde{M} = T\OO \oplus T\OO^\perp$. We will use a right action notation for this local action, which accounts for the fact that $\Kill(\widetilde{M})$ is anti-isomorphic to the (locally defined) Lie algebras of local isometries.

    For $h$ in a neighborhood of the identity in $\widetilde{H}$ we can define an orbit map $\varphi$ given by
    \[
        h \mapsto xh,
    \]
    which is a local diffeomorphism from a neighborhood of the identity onto a neighborhood of $x$. This map is clearly $\widetilde{H}$-equivariant. We also note that $d\varphi_e = ev_x$, the evaluation at $x$ for the Lie algebra $\HH$, and so it is a homomorphism of $\GG(x)$-modules by Proposition~\ref{prop:GG(x)andHH0(x)}. It also follows that
    \[
        d\varphi_e(\GG(x)) = T_x\OO, \quad
        d\varphi_e(\VV(x)) = T_x\OO^\perp.
    \]

    Let us endow $\widetilde{H}$ with the bi-invariant pseudo-Riemannian metric induced from the Killing form of its Lie algebra $\HH$. Since the complexification of $\HH$ is simple, this is (up to a constant) the only bi-invariant metric on $\widetilde{H}$. Since $\dim(\HH) = \dim(M) < 2 \dim(G)$, it follows that $\GG(x)$ is a non-degenerate subspace of $\HH$ for the metric. This implies that such metric at $\GG(x)$ is a constant multiple of the Killing form of $\GG(x)$, which also uses that $\GG(x)$ has simple complexification. Also, since $\VV(x) \simeq T_x\OO^\perp$ as $\GG(x)$-modules, the metric on $\VV(x)$ is unique up to a constant.

    Let us denote with $g$ the $G$-invariant pseudo-Riemannian metric given on $M$ and with the same symbol the corresponding metric on $\widetilde{M}$. Since the representations of $\GG(x)$ on $T_x\OO$ and $T_x\OO^\perp$ are metric preserving for such $g$, it follows from the previous discussion that there exists non-zero constants $c_1$ and $c_2$ such that
    \[
        K = c_1\varphi_e^*(g|_{T_x\OO}) +
            c_2\varphi_e^*(g|_{T_x\OO^\perp}),
    \]
    where $K$ is the Killing form of $\HH$. Hence, if we rescale the metric $g$ on $M$ along the bundles $T\OO$ and $T\OO^\perp$ and define
    \[
        \widehat{g} = c_1 g|_{T_x\OO} +
                        c_2 g|_{T_x\OO^\perp},
    \]
    then $d\varphi_e$ is an isometry for the new corresponding metric on $\widetilde{M}$. Note that the metric $\widehat{g}$ is still $G$-invariant in $M$ because the $G$-action preserves the decomposition $T\widetilde{M} = T\OO \oplus T\OO^\perp$ as well as $g$.

    On the other hand, $\varphi$ is $\widetilde{H}$-equivariant and the $\widetilde{H}$-action preserves the metric on its domain as well as the decomposition $T\widetilde{M} = T\OO \oplus T\OO^\perp$ (because it acts by isometries of $g$ and centralizes the $\widetilde{G}$-action). This implies that $\varphi$ is an isometry from a neighborhood of the identity in $\widetilde{H}$, with its bi-invariant metric, onto a neighborhood of $x$ in $\widetilde{M}$, with the new metric $\widehat{g}$.

    From the above discussion we conclude that $(M,\widehat{g})$ is isometric, in a neighborhood of some point, to the symmetric space $\widetilde{H}$ (with its bi-invariant metric). Hence, the pseudo-Riemannian manifold $(M,\widehat{g})$ satisfies $\nabla R = 0$ on a non-empty open subset. However, it is clear that $\widehat{g}$ is analytic as a consequence of the analyticity of $(M,g)$ and the $G$-action, and the construction above. We conclude that $\nabla R = 0$ (for the metric $\widehat{g}$) on all of $M$ and so that $(M,\widehat{g})$ is a local symmetric space at every point.

    We observe that a connected locally symmetric space is locally homogeneous, i.e.~any two points have isometric neighborhoods. This can be seen by connecting the points with piecewise geodesics and using the reflections on the midpoints of the geodesic segments.

    The above remarks show that $(M,\widehat{g})$ is locally symmetric and isometric to $H$ with a bi-invariant metric.
\end{proof}

An application of Lemmas~\ref{lem:so-and-dim<p+q}, \ref{lem:g22-and-dim<21}, \ref{lem:typeF-and-dim<78} and Theorem~\ref{thm:local-rigidity-general} yield the following consequences. The first two improve results found in \cite{OQ-SO} and \cite{QuirogaG2}, respectively, where completeness of $M$ is also assumed. Here we have dropped the completeness assumption while still providing a local geometric description of the manifold $M$. The third result is completely new to the best of our knowledge.

We observe that for $n = p+q$ we have 
\[
    \frac{n(n+1)}{2} < n(n-1) = 2 \dim (\SO_0(p,q)),
\]
for every $n > 3$.
Hence, the conditions $\dim(M) \leq (p+q)(p+q+1)/2$ and $p+q > 4$ imply in the next result that $\dim(M) < 2 \dim(\SO_0(p,q))$ as required by Theorem~\ref{thm:local-rigidity-general}.

\begin{corollary}\label{cor:so-local-rigidity}
    Let $M$ be a finite volume analytic pseudo-Riemannian manifold with an isometric and analytic $\SO_0(p,q)$-action with a dense orbit, where we assume that $p,q \geq 1$ and $p+q > 4$. Suppose that $\dim(M) \leq (p+q)(p+q+1)/2$ and that the normal bundle to the $\SO_0(p,q)$-orbits is non-integrable. Then, $M$ admits a $\SO_0(p,q)$-invariant pseudo-Riemannian metric for which it is locally isometric to either $\SO_0(p,q+1)$ or $\SO_0(p+1,q)$ endowed with some bi-invariant metric.
\end{corollary}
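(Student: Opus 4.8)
The plan is to obtain this statement as a direct application of Theorem~\ref{thm:local-rigidity-general}, using Lemma~\ref{lem:so-and-dim<p+q} to supply the structural hypotheses. The first step is to confirm the dimension condition $\dim(M) < 2\dim(\SO_0(p,q))$ required by the theorem. Writing $n = p+q$, one has $2\dim(\SO_0(p,q)) = n(n-1)$, and the displayed inequality preceding the corollary gives $n(n+1)/2 < n(n-1)$ precisely when $n > 3$. Since we assume $p+q > 4$, the hypothesis $\dim(M) \leq n(n+1)/2$ forces $\dim(M) < 2\dim(\SO_0(p,q))$. I would also record that $G = \SO_0(p,q)$ is a connected non-compact simple Lie group for $p+q > 4$, so that the standing setup of the earlier sections applies: $\widetilde{M}$ carries the lifted $\widetilde{G}$-action, the centralizer $\HH$ is defined, and the subsets $S, U \subset \widetilde{M}$ of Theorem~\ref{thm:rho-map} and Proposition~\ref{prop:HH-transitivity} are available.

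Next I would invoke Lemma~\ref{lem:so-and-dim<p+q}, whose hypotheses coincide with those of the relevant subsection --- namely $G = \SO_0(p,q)$ with $p+q \not= 2, 4$ (which holds since $p+q > 4$), the bound $\dim(M) \leq n(n+1)/2$, and non-integrability of $T\OO^\perp$. That lemma yields, for almost every $x \in S\cap U$, both $\HH_0(x) = 0$ (condition (1) of Theorem~\ref{thm:local-rigidity-general}) and $T_x\OO^\perp \simeq \R^{p,q}$ as $\GG(x)$-modules, the latter admitting a unique (up to a constant) $\GG(x)$-invariant non-degenerate symmetric bilinear form (condition (2)). In addition, the lemma identifies $\HH$ as a Lie algebra with either $\so(p,q+1)$ or $\so(p+1,q)$.

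It then remains to check the purely algebraic hypotheses of Theorem~\ref{thm:local-rigidity-general}: that $\HH$ is simple and that both $\HH$ and $\g$ have simple complexifications. Since $p+q > 4$, the complexification of $\g = \so(p,q)$ is $\so(p+q,\C)$, which is simple; likewise $\HH^\C \cong \so(p+q+1,\C)$ is simple because $p+q+1 > 5$, and hence $\HH$ itself is simple. With every hypothesis verified, Theorem~\ref{thm:local-rigidity-general} provides a rescaling of the metric along $T\OO$ and $T\OO^\perp$ producing a $G$-invariant metric $\widehat{g}$ that is locally isometric to a connected Lie group $H$ with Lie algebra $\HH$, equipped with the bi-invariant metric coming from the Killing form. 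Taking $H = \SO_0(p,q+1)$ or $H = \SO_0(p+1,q)$ according to the two cases of Lemma~\ref{lem:so-and-dim<p+q} completes the argument.

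I do not expect a genuine obstacle here, since the corollary merely packages the general theorem against the structural computation already carried out in the lemma. The only points requiring minor care are the verification that $\dim(M) \leq n(n+1)/2$ together with $p+q > 4$ really places us strictly below $2\dim(G)$, and the confirmation that the relevant orthogonal complexifications are simple --- a property that fails exactly in the excluded low-rank cases $p+q = 2, 4$, which is precisely why the hypothesis $p+q > 4$ is imposed.
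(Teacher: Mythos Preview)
Your proposal is correct and follows exactly the paper's own approach: the corollary is stated as an immediate consequence of Lemma~\ref{lem:so-and-dim<p+q} and Theorem~\ref{thm:local-rigidity-general}, with the dimension inequality $\tfrac{n(n+1)}{2} < n(n-1)$ for $n>3$ recorded just before the statement. Your additional explicit check that $\so(p+q,\C)$ and $\so(p+q+1,\C)$ are simple for $p+q>4$ is a welcome clarification that the paper leaves implicit.
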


For the next result we note that $\dim(M) \leq 21$ clearly implies $\dim(M) < 28 = 2\dim(G_{2(2)})$ as required by Theorem~\ref{thm:local-rigidity-general}.

\begin{corollary}\label{cor:g22-local-rigidity}
    Let $M$ be a finite volume analytic pseudo-Riemannian manifold with an isometric and analytic $G_{2(2)}$-action with a dense orbit, where $G_{2(2)}$ is the simply connected non-compact simple real Lie group of type $G_2$. Suppose that $\dim(M) \leq 21$ and that the normal bundle to the $G_{2(2)}$-orbits is non-integrable. Then, $M$ admits a $G_{2(2)}$-invariant pseudo-Riemannian metric for which it is locally isometric to $\SO_0(3,4)$ endowed with some bi-invariant metric.
\end{corollary}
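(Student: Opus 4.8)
The plan is to obtain this statement as a direct application of Theorem~\ref{thm:local-rigidity-general}, feeding it the structural information already assembled in Lemma~\ref{lem:g22-and-dim<21}. First I would record that the hypotheses of the preceding subsection are exactly those of the corollary: the $\Gex$-action is analytic, isometric and has a dense orbit, $M$ is finite-volume and analytic, and the normal bundle $T\OO^\perp$ is non-integrable. Since $\dim(\Gex)=14$, the bound $\dim(M)\le 21$ gives $\dim(M)<28=2\dim(\Gex)$, so Proposition~\ref{prop:dimM<2dimG} applies and the $\Gex$-orbits are non-degenerate, with $T\widetilde{M}=T\OO\oplus T\OO^\perp$ an orthogonal non-degenerate decomposition. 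Thus all the constructions feeding into Lemma~\ref{lem:g22-and-dim<21} are available.

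Next I would invoke Lemma~\ref{lem:g22-and-dim<21} to conclude that, for almost every $x\in S\cap U$, one has $\HH_0(x)=0$ and that $T_x\OO^\perp$ is the unique $7$-dimensional irreducible $\GG(x)$-module; in particular it is non-trivial, irreducible, and admits a unique (up to a constant) $\GG(x)$-invariant non-degenerate symmetric bilinear form, and moreover $\HH\simeq\so(3,4)$. The first of these is exactly hypothesis (1) of Theorem~\ref{thm:local-rigidity-general}, and the second is hypothesis (2), so both are met verbatim.

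It then remains to verify the two purely algebraic hypotheses of the theorem, namely that $\HH$ is simple and that both $\HH$ and $\g$ have simple complexifications. The first holds since $\so(3,4)$ is a real simple Lie algebra. For the complexifications, $\g^\C$ is the complex simple Lie algebra of type $G_2$ (recall $\g=\gex$), while $\HH^\C\simeq\so(7,\C)$ is the complex simple Lie algebra of type $B_3$; both are simple. With every hypothesis in place, Theorem~\ref{thm:local-rigidity-general} produces a $\Gex$-invariant pseudo-Riemannian metric $\widehat{g}$ on $M$ that is locally isometric to a connected Lie group $H$ with Lie algebra $\HH\simeq\so(3,4)$, endowed with the bi-invariant metric coming from its Killing form. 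Since the local isometry type of such a bi-invariant model depends only on the underlying Lie algebra, and $\so(3,4)$ is the Lie algebra of $\SO_0(3,4)$, we may take $H=\SO_0(3,4)$ and conclude that $(M,\widehat{g})$ is locally isometric to $\SO_0(3,4)$ with a bi-invariant metric, as claimed.

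Because Lemma~\ref{lem:g22-and-dim<21} and Theorem~\ref{thm:local-rigidity-general} carry out the real work, there is no analytic obstacle here; the proof is essentially a check that the hypotheses match. The only point that genuinely requires attention is the bookkeeping that the ``almost every $x$'' conclusions of the lemma coincide with the pointwise conditions (1)--(2) demanded by the theorem, together with the global algebraic inputs (simplicity of $\HH$ and of the two complexifications). The identification of the concrete real form $\SO_0(3,4)$ among the real forms of $B_3$ is already accomplished inside Lemma~\ref{lem:g22-and-dim<21}, so nothing further is needed on that front.
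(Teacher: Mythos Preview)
Your proposal is correct and matches the paper's own approach exactly: the paper derives Corollary~\ref{cor:g22-local-rigidity} as an immediate application of Lemma~\ref{lem:g22-and-dim<21} and Theorem~\ref{thm:local-rigidity-general}, after observing that $\dim(M)\le 21<28=2\dim(G_{2(2)})$. Your write-up simply makes explicit the verification of the algebraic hypotheses (simplicity of $\HH\simeq\so(3,4)$ and of the complexifications $\g^\C$ and $\HH^\C$), which the paper leaves implicit.
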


Finally, the assumption $\dim(M) \leq 78$ in the next corollary implies $\dim(M) < 104 = 2\dim(G)$ for $G$ a type $F_4$ Lie group, so again the needed bound on the dimension is satisfied.

\begin{corollary}\label{cor:f4-local-rigidity}
    Let $M$ be a finite volume analytic pseudo-Riemannian manifold with an isometric and analytic $G$-action with a dense orbit, where $G$ is a simply connected non-compact simple real Lie group of type $F_4$. Suppose that $\dim(M) \leq 78$ and that the normal bundle to the $G$-orbits is non-integrable. Then, $M$ admits a $G$-invariant pseudo-Riemannian metric for which it is locally isometric to a non-compact simple real Lie group of type $E_6$ over $\R$ endowed with some bi-invariant metric.
\end{corollary}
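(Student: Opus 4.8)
The plan is to obtain this corollary as a direct application of Theorem~\ref{thm:local-rigidity-general}, feeding it the representation-theoretic input already assembled in Lemma~\ref{lem:typeF-and-dim<78}. First I would record that the dimension hypothesis of the theorem is met: a Lie group of type $F_4$ has dimension $52$, so the assumption $\dim(M) \leq 78$ gives $\dim(M) < 104 = 2\dim(G)$. Consequently Proposition~\ref{prop:dimM<2dimG} applies, the $G$-orbits are non-degenerate submanifolds, and we have the orthogonal splitting $T\widetilde{M} = T\OO \oplus T\OO^\perp$ that underlies all of the later constructions. Since the normal bundle is assumed non-integrable, Proposition~\ref{prop:o_O_homomorphisms} further guarantees that $\Omega_x \neq 0$ on a conull subset of $S \cap U$, which is precisely the standing assumption under which Lemma~\ref{lem:typeF-and-dim<78} was proved.

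Next I would invoke Lemma~\ref{lem:typeF-and-dim<78} to verify, for almost every $x \in S \cap U$, the two numbered hypotheses of Theorem~\ref{thm:local-rigidity-general}: (i) $\HH_0(x) = 0$, and (ii) $T_x\OO^\perp$ is the $26$-dimensional irreducible $\GG(x)$-module, which carries a unique (up to a constant) $\GG(x)$-invariant non-degenerate symmetric bilinear form. The same lemma also yields the structural fact that $\HH$ is a non-compact simple real Lie algebra of type $E_6$ over $\R$, which supplies the simplicity of $\HH$ demanded by the theorem. It then remains only to check the complexification condition, namely that both $\g$ and $\HH$ have simple complexifications; this is immediate, since $\g^\C \cong \mathfrak{f}_4^\C$ and, by the lemma, $\HH^\C \cong \mathfrak{e}_6^\C$, both of which are simple. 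With every hypothesis in place, Theorem~\ref{thm:local-rigidity-general} furnishes a $G$-invariant pseudo-Riemannian metric on $M$ that is locally isometric to a connected Lie group $H$ with Lie algebra $\HH$ endowed with a bi-invariant metric; as $\HH$ is of type $E_6$, such $H$ is a non-compact simple real Lie group of type $E_6$ over $\R$, which is exactly the asserted conclusion.

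I do not expect a genuine obstacle at the level of the corollary itself, since the proof is purely a matter of correctly matching the interfaces of Lemma~\ref{lem:typeF-and-dim<78} and Theorem~\ref{thm:local-rigidity-general}; all the substantive difficulty has already been absorbed into the lemma. If I were to single out the delicate point, it lies \emph{inside} that lemma rather than here: the combination of Weyl's dimension formula with the bound $\dim(V_0) \leq 4$ from Proposition~\ref{prop:V0-inTOOperp-dim4}, which rules out every reducible alternative for the complementary module $W$ and forces the isomorphism type of $\HH$ to be $E_6$ rather than $\so(13,\C)$ or $\mathfrak{sp}(12,\C)$ (the latter being excluded because $\mathfrak{f}_4^\C$ admits no nontrivial representation below dimension $26$). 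Since this is established upstream, the corollary follows with no further computation.
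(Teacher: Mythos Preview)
Your proposal is correct and matches the paper's own argument: the corollary is obtained exactly by feeding Lemma~\ref{lem:typeF-and-dim<78} into Theorem~\ref{thm:local-rigidity-general}, after noting that $\dim(M)\le 78 < 104 = 2\dim(G)$. The additional checks you spell out (simplicity of $\HH$ and simplicity of the complexifications $\mathfrak{f}_4^\C$ and $\mathfrak{e}_6^\C$) are precisely the remaining hypotheses the paper implicitly verifies before invoking the theorem.
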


\section{Global rigidity for complete analytic pseudo-Riemannian manifolds}

In this section we build from the local description obtained in the previous one using analyticity. For this we will now add one more assumption: completeness of $M$. The following result will be fundamental to obtain our global description. A proof can be found in \cite{OQ-SO}, and we also refer to \cite{ONeill}.

\begin{proposition}\label{prop:completeness-actions}
    Let $N$ be a complete pseudo-Riemannian manifold. Then, the Lie algebra of the isometry group of $N$ is anti-isomorphic to $\Kill(N)$. In particular, for every Lie subalgebra $\h$ of $\Kill(N)$ and $H$ a simply connected Lie group with Lie algebra $\h$ there is an isometric right $H$-action on $N$ so that the map
	\begin{align*}
		\mathrm{Lie}(H) &\rightarrow \h \\
			X &\mapsto X^*
	\end{align*}
    is the identity map. In other words, the local action of $\h$ integrates to an isometric right action of $H$.
\end{proposition}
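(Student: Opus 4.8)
The plan is to prove the two assertions in turn, with the completeness of $N$ entering decisively in each.

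For the anti-isomorphism, I would begin from the classical fact, recalled in Section~\ref{sec:KillingGromovZimmer} and proved in \cite{KobayashiTransGroups}, that $\Iso(N)$ is a Lie group. Letting $\Iso(N)$ act on $N$ on the left, each $\xi \in \mathrm{Lie}(\Iso(N))$ determines a fundamental vector field
\[
    \xi^\dagger_p = \frac{d}{dt}\Big|_{t=0} \exp(t\xi)\cdot p,
\]
which is a Killing field since its flow consists of isometries; as is standard for the infinitesimal generators of a left action, $\xi \mapsto \xi^\dagger$ is an \emph{anti}-homomorphism of Lie algebras. This map is injective, and its surjectivity onto $\Kill(N)$ is exactly where completeness is used: by the result of \cite{ONeill} cited in Section~\ref{sec:KillingGromovZimmer}, completeness of $N$ forces every Killing field to be complete, hence to generate a global one-parameter group of isometries, so every element of $\Kill(N)$ is of the form $\xi^\dagger$. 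Thus $\xi \mapsto \xi^\dagger$ is an anti-isomorphism $\mathrm{Lie}(\Iso(N)) \cong \Kill(N)$.

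For the second assertion, let $\h \subseteq \Kill(N)$ be a Lie subalgebra. Since $\Kill(N)$ is finite dimensional, so is $\h$, and again by \cite{ONeill} every element of $\h$ is a complete vector field. I would then integrate $\h$: under the anti-isomorphism above it corresponds to a subalgebra of $\mathrm{Lie}(\Iso(N))$, which by the Lie subalgebra--subgroup correspondence is the Lie algebra of a unique connected immersed subgroup of $\Iso(N)$; composing the covering homomorphism with this immersion yields a homomorphism from the simply connected group $H$ with Lie algebra $\h$ into $\Iso(N)$, and hence an action of $H$ on $N$ by isometries. Equivalently, one may invoke Palais' theorem, which integrates any finite-dimensional Lie algebra of complete vector fields to an action of the associated simply connected group; completeness of the fields is its key hypothesis. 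It then remains to arrange the variance so that the generator map is literally the identity. Because the fundamental vector fields of a left action form an anti-homomorphism while those of a right action form a homomorphism, I would pass to the right action $p \cdot h := \pi(h)^{-1}\cdot p$, where $\pi\colon H \to \Iso(N)$ is the homomorphism just constructed; this replaces each $\xi^\dagger$ by $-\xi^\dagger$ and converts the anti-homomorphism into a homomorphism $\mathrm{Lie}(H) \to \Kill(N)$ with image $\h$. Choosing the identification $\mathrm{Lie}(H) = \h$ compatibly with this (a routine reparametrization by the Lie-algebra isomorphism induced above) makes the generator map the identity $X \mapsto X^* = X$, as required, and the action is isometric because it factors through $\Iso(N)$.

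I expect the genuine obstacle to be the \emph{global} integration in the second assertion. Each field of $\h$ is individually complete and integrates to a one-parameter group, but it is not automatic that these fit together into a single globally defined action of $H$ rather than merely a local action near each point; this is precisely the gap that completeness of $N$ closes, since it guarantees completeness of \emph{every} member of the finite-dimensional algebra $\h$ --- exactly the hypothesis needed for Palais' theorem (equivalently, for the immersed subgroup of $\Iso(N)$ to exist and act globally). By contrast, the passage from left to right actions and the attendant sign-tracking, while essential for landing on the identity map rather than on $\pm$ an anti-isomorphism, are routine.
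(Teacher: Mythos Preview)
Your proposal is correct and follows the standard route to this classical result. Note, however, that the paper does not actually supply a proof of this proposition: it simply states the result and refers the reader to \cite{OQ-SO} and \cite{ONeill}. Your argument---using completeness of $N$ to guarantee completeness of every Killing field (via \cite{ONeill}), thereby obtaining surjectivity of the fundamental-vector-field map and then integrating $\h$ either through the immersed subgroup of $\Iso(N)$ or via Palais' theorem---is precisely the argument one finds in those references, so there is nothing to contrast.
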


We now state and prove our main global rigidity result. An important difference with Theorem~\ref{thm:local-rigidity-general}, the local rigidity result, is the extra assumption of completeness of the manifold $M$. This turns out to yield a considerable stronger conclusion: the manifold $M$ is, up to a finite covering, of the form $H/\Gamma$ for some non-compact simple group $H$ and $\Gamma$ a lattice in $H$.

\begin{theorem}[Global rigidity of $G$-actions]
\label{thm:global-rigidity-general}
    Let $G$ be a connected non-compact simple Lie group and let $M$ be a finite volume complete analytic pseudo-Riemannian manifold. Suppose that there is an analytic and isometric $G$-action on $M$ with a dense orbit and that $\dim(M) < 2 \dim(G)$. Let $\HH$ be the centralizer in $\Kill(\widetilde{M})$ of the $\widetilde{G}$-action on $\widetilde{M}$ and let $H$ be a connected Lie group whose Lie algebra is $\HH$. With the notation of Theorem~\ref{thm:rho-map} and Proposition~\ref{prop:HH-transitivity}, assume that for almost every $x \in S\cap U$ the following holds
    \begin{enumerate}
      \item $\HH_0(x) = 0$.
      \item $T_x\OO^\perp$ is an irreducible $\GG(x)$-module for which there exists a unique (up to a constant) $\GG(x)$-invariant non-degenerate symmetric bilinear form.
    \end{enumerate}
    Also assume that both $\HH$ and $\g$ have simple complexifications, that $\HH$ is not of Hermitian type and that the image of every non-trivial homomorphism $\g \rightarrow \HH$ is a maximal subalgebra of $\HH$.

    Then, there exists a finite covering map $\widehat{M} \rightarrow M$,  a lattice $\Gamma \subset \widetilde{H}$, a homomorphism $\varphi : \widetilde{G} \rightarrow \widetilde{H}$, and a diffeomorphism
    \[
        \psi : \widehat{M} \rightarrow \widetilde{H}/\Gamma,
    \]
    which is $\varphi$-equivariant, where the $\widetilde{G}$-action on $\widehat{M}$ is obtained by lifting the $G$-action on $M$.

    Furthermore, one can choose the metric $\widehat{g}$ on $M$ from Theorem~\ref{thm:local-rigidity-general} so that $\psi$ is an isometry where $\widetilde{H}/\Gamma$ carries the metric induced from a bi-invariant metric on $\widetilde{H}$. In particular, the universal covering space $\widetilde{M}$ is a globally symmetric space with respect to the metric $\widehat{g}$.
\end{theorem}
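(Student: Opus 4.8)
The plan is to build on the local rigidity already established in Theorem~\ref{thm:local-rigidity-general}, whose hypotheses are a subset of those assumed here, so we may freely use the rescaled metric $\widehat{g}$ and the conclusion that $(M,\widehat{g})$ is locally isometric to $\widetilde{H}$ with its bi-invariant metric. The new ingredient is completeness, which I would exploit through Proposition~\ref{prop:completeness-actions}: since $\HH$ is a Lie subalgebra of $\Kill(\widetilde{M})$ and $\widetilde{M}$ is complete, the local $\widetilde{H}$-action integrates to a genuine isometric right $\widetilde{H}$-action on $\widetilde{M}$. The first step, therefore, is to upgrade the orbit map $\varphi:h\mapsto xh$ from a local diffeomorphism to a globally defined $\widetilde{H}$-equivariant map, and then argue that this action is transitive. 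Transitivity should follow from Proposition~\ref{prop:HH-transitivity}, which gives $ev_x(\HH)=T_x\widetilde{M}$ on a dense conull set, so the $\widetilde{H}$-orbit of $x$ is open; combined with the fact that distinct orbits of an isometric action are disjoint and the connectedness of $\widetilde{M}$, the orbit is all of $\widetilde{M}$.

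Once transitivity is in hand, I would identify the stabilizer. The evaluation map $ev_x$ has kernel $\HH_0(x)=0$ by hypothesis~(1), so the isotropy subalgebra of the $\widetilde{H}$-action at $x$ is trivial, meaning the stabilizer $\widetilde{H}_x$ is discrete. This realizes $\widetilde{M}$ as $\widetilde{H}/\widetilde{H}_x$, and since $\varphi$ is an isometry onto a neighborhood (from the proof of Theorem~\ref{thm:local-rigidity-general}) and $\widetilde{H}$ carries the bi-invariant metric, the identification $\widetilde{M}\simeq\widetilde{H}$ is an isometry when $\widetilde{H}_x$ is trivial; in general $\widetilde{M}\simeq\widetilde{H}/\widetilde{H}_x$ isometrically. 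Because $\widetilde{M}$ is simply connected and the bi-invariant metric makes $\widetilde{H}$ a symmetric space, I would then conclude that $\widetilde{H}_x$ is trivial and $\widetilde{M}$ is globally symmetric, giving the final sentence of the theorem. Passing to the non-simply-connected picture, $M=\widetilde{M}/\pi_1(M)$, and one takes $\Gamma$ inside $\widetilde{H}$ to be the discrete subgroup corresponding to $\pi_1(M)$ acting by deck transformations that commute with $\widetilde{G}$; finiteness of volume of $M$ forces $\Gamma$ to be a lattice.

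The $\varphi$-equivariance of $\psi$ comes from Theorem~\ref{thm:rho-map}(3) and Proposition~\ref{prop:rhox-GG(x)}: the subalgebra $\GG(x)\subset\HH$ is isomorphic to $\g$ and its image under $ev_x$ is exactly $T_x\OO$, so the inclusion $\GG(x)\hookrightarrow\HH$ integrates to the homomorphism $\varphi:\widetilde{G}\to\widetilde{H}$ whose composition with the transitive $\widetilde{H}$-action recovers the original $\widetilde{G}$-action on $\widetilde{M}$. Here the extra hypotheses that $\HH$ is not of Hermitian type and that the image of every non-trivial $\g\to\HH$ is maximal are what pin down $\varphi$ and ensure $\Gamma$ is an honest lattice rather than merely discrete; I expect these to enter when showing $\widetilde{G}\cdot\Gamma$ is dense or when ruling out proper $\widetilde{G}$-invariant quotients, so that the dense-orbit hypothesis on $M$ translates into the lattice condition on $\Gamma$ via finiteness of volume and a Borel-density-type argument.

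The main obstacle I anticipate is the passage from a discrete stabilizer to the clean statement that $\widehat{M}\simeq\widetilde{H}/\Gamma$ with $\Gamma$ a genuine lattice and the covering $\widehat{M}\to M$ finite. Integrating the local $\widetilde{H}$-action and proving transitivity are routine given completeness, but controlling the discrete stabilizer group and verifying that the induced $\Gamma$ has finite covolume, while simultaneously keeping track of $\widetilde{G}$-equivariance, requires care; in particular the step where finite volume of $M$ and the density of the $\widetilde{G}$-orbit together force $\Gamma$ to be a lattice (rather than an infinite-covolume discrete subgroup) is where the non-Hermitian and maximality hypotheses must do real work, and is the part I would expect to occupy the bulk of a careful write-up. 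For the details of this last argument I would refer to the corresponding steps in \cite{OQ-SO} and \cite{QuirogaG2}, where the analogous conclusions were reached for $\SO_0(p,q)$ and $G_{2(2)}$.
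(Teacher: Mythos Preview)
Your overall strategy—integrate $\HH$ to a global right $\widetilde{H}$-action via completeness, identify $\widetilde{M}$ with $\widetilde{H}$ through the orbit map, then descend—matches the paper's. The identification $\widetilde{M}\simeq\widetilde{H}$ is cleanest the way the paper does it: the orbit map $\widetilde{F}:h\mapsto xh$ is a local isometry for $\widehat{g}$, the source $\widetilde{H}$ is complete (bi-invariant metric), hence $\widetilde{F}$ is a pseudo-Riemannian covering onto the connected target $\widetilde{M}$, and simple connectedness of $\widetilde{M}$ makes it a global isometry. Your open-orbit-plus-connectedness argument for transitivity is incomplete as stated, since you only know orbits are open at points of $U$; the covering-map argument bypasses this. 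Also, your description of $\varphi$ is slightly off: the inclusion $\GG(x)\hookrightarrow\HH$ does give the correct homomorphism $\varphi$, but the original $\widetilde{G}$-action on $\widetilde{M}\simeq\widetilde{H}$ corresponds to \emph{left} translation by $\varphi(\widetilde{G})$, not to the restriction of the right $\widetilde{H}$-action—precisely because $\widetilde{G}$ commutes with the right action.

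The substantive gap is your account of the non-Hermitian and maximality hypotheses. Your guesses (Borel density, ruling out invariant quotients, forcing the lattice condition) miss the mechanism. What actually happens: once $\widetilde{M}\simeq\widetilde{H}$, the deck group $\pi_1(M)$ sits in $\Iso(\widetilde{H})$, whose identity component is $L(\widetilde{H})R(\widetilde{H})$. Deck transformations commute with the left $\varphi(\widetilde{G})$-action, so the finite-index subgroup $\Gamma_1=\pi_1(M)\cap\Iso_0(\widetilde{H})$ lies in $L(Z)R(\widetilde{H})$ where $Z=Z_{\widetilde{H}}(\varphi(\widetilde{G}))$. The maximality hypothesis forces the Lie algebra of $Z$ to be zero (since $d\varphi(\g)+\z$ would be a subalgebra properly containing $d\varphi(\g)$ unless $\z\subset d\varphi(\g)$, hence $\z=0$), so $Z$ is discrete; the non-Hermitian hypothesis makes the fixed-point group $\widetilde{K}$ of a Cartan involution genuinely compact, and a Cartan-decomposition argument (Warner~\cite{Warner}) shows $Z\subset\widetilde{K}$, hence $Z$ is finite. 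It follows that $\Gamma=\Gamma_1\cap R(\widetilde{H})$ has finite index in $\pi_1(M)$, which is what produces the \emph{finite} cover $\widehat{M}=\widetilde{H}/\Gamma\to M$. The lattice property then drops out immediately from finite volume of $M$ (the volume forms of $g$ and $\widehat{g}$ differ by a constant), with no density argument required. In short, the two extra hypotheses control the finiteness of the cover $\widehat{M}\to M$, not the covolume of $\Gamma$.
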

\begin{proof}
    This will build from the proof of Theorem~\ref{thm:local-rigidity-general} and so we will follow its notation. Hence, we start by fixing $x \in S\cap U \subset \widetilde{M}$ for which the hypotheses hold.

    In the first place, the local $\widetilde{H}$-action on $\widetilde{M}$ can be extended to a global action that we consider as an action on the right. We have an orbit map
    \begin{align*}
      \widetilde{F} : \widetilde{H} &\rightarrow \widetilde{M} \\
      h &\mapsto xh,
    \end{align*}
    which is a local diffeomorphism at the identity. Since $\widetilde{F}$ is $\widetilde{H}$-equivariant it follows that it is in fact a local diffeomorphism everywhere.

    As in the proof of Theorem~\ref{thm:local-rigidity-general} we rescale the metric on $M$ to obtain a new metric $\widehat{g}$ so that $\widetilde{F}$ is a local isometry where $\widetilde{H}$ carries the bi-invariant pseudo-Riemannian metric defined by the Killing form of its Lie algebra. Note that the local isometry condition holds everywhere by the $\widetilde{H}$-equivariance of $\widetilde{F}$. Since $\widetilde{H}$ is complete with the chosen metric, it follows that $\widetilde{F}$ is a covering map (see \cite{ONeill}), and so, by the simply connectedness of $\widetilde{M}$, $\widetilde{F}$ is an isometry.

    By the previous discussion, there is a pseudo-Riemannian covering map
    \[
        \pi : \widetilde{H} \rightarrow (M,\widehat{g}),
    \]
    where $\widetilde{H}$ carries (from now on) the bi-invariant metric induced from the Killing form of its Lie algebra.

    We now lift the isometric $G$-action on $M$ to a $\widetilde{G}$-action on $\widetilde{H}$ thus inducing a homomorphism
    \[
        \varphi : \widetilde{G} \rightarrow \Iso_0(\widetilde{H}).
    \]
    We recall that $\Iso_0(\widetilde{H}) = L(\widetilde{H})R(\widetilde{H})$, the left and right translations by $\widetilde{H}$ (see \cite{QuirogaAnnals}). Then, the equivariance of the map $\widetilde{F}$ shows that the right translations define an $\widetilde{H}$-action that integrates the centralizer $\HH$ of the $\widetilde{G}$-action. This implies that $\varphi$ is in fact a homomorphism $\widetilde{G} \rightarrow L(\widetilde{H}) = \widetilde{H}$, which we will consider so from now on.

    On the other hand, through the covering map $\pi$, we can consider $\pi_1(M)$ a subgroup of $\Iso(\widetilde{H})$. Since $\Iso_0(\widetilde{H})$ has finite index in $\Iso(\widetilde{H})$ (see \cite{QuirogaAnnals}), it follows that
    \[
        \Gamma_1 = \pi_1(M) \cap \Iso_0(\widetilde{H})
                = \pi_1(M) \cap L(\widetilde{H})R(\widetilde{H})
    \]
    is a finite index subgroup of $\pi_1(M)$. Since the left $\varphi(\widetilde{G})$-action and the $\pi_1(M)$-action on $\widetilde{M}$ commute it follows that
    \[
        \Gamma_1 \subset L(Z) R(\widetilde{H}),
    \]
    where $Z = Z_{\widetilde{H}}(\varphi(\widetilde{G}))$ is the centralizer of $\varphi(\widetilde{G})$ in $\widetilde{H}$. We will prove that this centralizer $Z$ is finite.

    Let $\mathfrak{z}$ be the Lie algebra of $Z$. Then, $d\varphi(\g) + \mathfrak{z}$ is a Lie subalgebra of $\HH$, and the maximality condition from the hypotheses implies that $\mathfrak{z} \subset d\varphi(\g)$. Hence, we conclude that $\mathfrak{z} = 0$, and so $Z$ is discrete.

    Next, let us consider Cartan involutions $\Theta$ and $\theta$ of $\HH$ and $\g$, respectively, such that $\Theta|_{d\varphi(\g)}$ and $\theta$ are intertwined by $d\varphi$. Let $h \in Z$ be given and write $h = k \exp(X)$, where $k$ lies in the connected Lie subgroup $\widetilde{K}$ of $\widetilde{H}$ with Lie algebra $\mathcal{K}$, the fixed point of set of $\Theta$. Since $\HH$ is not of Hermitian type the subgroup $\widetilde{K}$ is compact. Let $Y \in \g$ be given so that we have
    \[
        d\varphi(Y) = \mathrm{Ad}(h)(d\varphi(Y))
        = \mathrm{Ad}(k\exp(X))(d\varphi(Y)).
    \]
    If $Y$ is an eigenvector for $\theta$, then $d\varphi(Y)$ is an eigenvector for $\Theta$, and so Lemma~1.1.3.7 from \cite{Warner} implies in this case that $[X,d\varphi(Y)] = 0$. But this proves that $X \in \mathfrak{z} = 0$ and so we have $h \in \widetilde{K}$.

    We have proved that $Z$ is a discrete subgroup of the compact group $\widetilde{K}$, thus showing that $Z$ is finite.

    By the previous remarks, it follows that
    \[
        \Gamma = \Gamma_1 \cap R(\widetilde{H})
    \]
    is a finite index subgroup of $\Gamma_1$ and so of $\pi_1(M)$. Also, $\Gamma$ can be considered as a subgroup $\widetilde{H}$. Then, the map $\widetilde{F}$ induces a diffeomorphism
    \[
        F : \widehat{M} = \widetilde{H}/\Gamma \rightarrow \widetilde{M}/\Gamma
    \]
    which is clearly $\widetilde{G}$-equivariant by construction. This in turn yields a finite covering map
    \[
        \psi : \widehat{M} = \widetilde{M}/\Gamma \rightarrow M = \widetilde{M}/\pi_1(M)
    \]
    and this is again $\widetilde{G}$-equivariant. Note that this map is also locally isometric for the metric $\widehat{g}$ on $M$.

    A straightforward computation (see \cite{OQ-SO}) shows that the volume elements on $M$ for the (original) metric $g$ and the metric $\widehat{g}$ are multiples of each other. Hence, the finiteness of the covering map $\psi$ implies that $\widetilde{H}/\Gamma$ has finite volume, thus that $\Gamma$ is a lattice in $\widetilde{H}$.
\end{proof}

Again, we apply Lemmas~\ref{lem:so-and-dim<p+q}, \ref{lem:g22-and-dim<21}, \ref{lem:typeF-and-dim<78} but now with Theorem~\ref{thm:global-rigidity-general} to obtain the following consequences. To complete the proofs of the first two results we only need to observe that $\so(r,s)$ is not of Hermitian type for $r,s \not=2$. These two results can be found in \cite{OQ-SO} and \cite{QuirogaG2}, respectively. The third result is completely new to the best of our knowledge. For this one, we use the proof of Theorem~\ref{thm:global-rigidity-general} for the first claim and the second claim follows directly.

\begin{corollary}\label{cor:so-global-rigidity}
    Let $M$ be a finite volume complete analytic pseudo-Riemannian manifold with an isometric and analytic $\SO_0(p,q)$-action with a dense orbit, where we assume that $p,q > 2$. Suppose that $\dim(M) \leq (p+q)(p+q+1)/2$ and that the normal bundle to the $\SO_0(p,q)$-orbits is non-integrable. Then, there exists a finite covering map $\widehat{M} \rightarrow M$, a lattice $\Gamma$ in $\widetilde{H}$, where $\widetilde{H}$ is isomorphic to either $\wtSO_0(p,q+1)$ or $\wtSO_0(p+1,q)$, a homomorphism $\varphi : \wtSO_0(p,q) \rightarrow \widetilde{H}$ and a diffeomorphism
    \[
        \psi : \widehat{M} \rightarrow \widetilde{H}/\Gamma,
    \]
    which is $\varphi$-equivariant. Furthermore, there is a $\wtSO_0(p,q)$-invariant metric $\widehat{g}$ on $M$ for which $\psi$ is an isometry where $\widetilde{H}/\Gamma$ carries the bi-invariant metric induced from the Killing form of $\widetilde{H}$.
\end{corollary}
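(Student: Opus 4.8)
The plan is to obtain Corollary~\ref{cor:so-global-rigidity} as a direct application of Theorem~\ref{thm:global-rigidity-general}, so the main task is to verify that each hypothesis of the general theorem is satisfied under the stated assumptions. First I would observe that the conditions $p,q > 2$ and $\dim(M) \leq (p+q)(p+q+1)/2$ place us exactly in the setting of Subsection~6.1 (with $n = p+q > 4$, so that $\dim(M) < 2\dim(G)$ as already remarked), and that the non-integrability of the normal bundle is assumed. Hence Lemma~\ref{lem:so-and-dim<p+q} applies and gives, for almost every $x \in S\cap U$, both that $\HH_0(x) = 0$ and that $T_x\OO^\perp \simeq \R^{p,q}$ is an irreducible $\GG(x)$-module carrying a unique (up to scale) invariant non-degenerate symmetric form. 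This verifies conditions (1) and (2) of Theorem~\ref{thm:global-rigidity-general}. The same lemma identifies $\HH$ as $\so(p,q+1)$ or $\so(p+1,q)$, so $\HH$ is simple with simple complexification; and $\g = \so(p,q)$ likewise has simple complexification for $p+q > 4$.

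Next I would dispatch the two remaining structural hypotheses of the general theorem. For the Hermitian condition, I would invoke the standard classification fact that $\so(r,s)$ is of Hermitian type only when $r = 2$ or $s = 2$; since here $p,q > 2$ we have $r,s \geq 3$ for $\HH$ equal to $\so(p,q+1)$ or $\so(p+1,q)$, so $\HH$ is \emph{not} of Hermitian type, as the paper notes explicitly. For the maximality hypothesis, I would verify that the image $d\varphi(\g) \simeq \so(p,q)$ sits as a maximal subalgebra inside $\HH \simeq \so(p,q+1)$ (respectively $\so(p+1,q)$): this is the isotropy embedding coming from the decomposition $\HH = \GG(x) \oplus \VV(x)$ with $\VV(x) \simeq \R^{p,q}$ the standard module, which realizes the symmetric pair $(\so(p,q+1),\so(p,q))$, and the reductive subalgebra in a symmetric pair of this type is maximal. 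With all hypotheses in place, Theorem~\ref{thm:global-rigidity-general} yields the finite covering $\widehat{M} \to M$, the lattice $\Gamma \subset \widetilde{H}$, the homomorphism $\varphi$, and the $\varphi$-equivariant diffeomorphism $\psi$, together with the rescaled metric $\widehat{g}$ making $\psi$ an isometry onto $\widetilde{H}/\Gamma$ with its bi-invariant metric.

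The step I expect to require the most care is the verification of the maximality of the embedding $\g \hookrightarrow \HH$, since Theorem~\ref{thm:global-rigidity-general} uses it crucially to force the centralizer $Z = Z_{\widetilde{H}}(\varphi(\widetilde{G}))$ to be finite. The cleanest route is to recognize the pair as a Riemannian (or pseudo-Riemannian) symmetric pair $(\so(p,q+1),\so(p,q))$, whose associated involution has $\g$ as its full fixed-point subalgebra; the fixed-point subalgebra of an involution of a simple Lie algebra is automatically maximal among proper subalgebras when the complementary module is irreducible, which is precisely condition (2) already established. Beyond this, the corollary is essentially bookkeeping: matching the isomorphism type of $\HH$ from Lemma~\ref{lem:so-and-dim<p+q} to the statement's $\wtSO_0(p,q+1)$ or $\wtSO_0(p+1,q)$, and noting that the homomorphism $\varphi$ produced by the general theorem is the required $\wtSO_0(p,q) \to \widetilde{H}$. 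I would close by remarking that this recovers the result of \cite{OQ-SO}, where completeness was likewise assumed.
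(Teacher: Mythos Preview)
Your proposal is correct and follows the paper's route exactly: invoke Lemma~\ref{lem:so-and-dim<p+q} to verify hypotheses (1) and (2) and to identify $\HH$, observe that $\so(r,s)$ is not of Hermitian type for $r,s\neq 2$, and apply Theorem~\ref{thm:global-rigidity-general}. You are in fact more explicit than the paper about the maximality hypothesis; just note that Theorem~\ref{thm:global-rigidity-general} formally requires maximality for \emph{every} non-trivial homomorphism $\g\to\HH$, not only the specific embedding coming from $\GG(x)\subset\HH$, so your symmetric-pair argument should be supplemented by the (easy) observation that in this dimension range all embeddings $\so(p,q)\hookrightarrow\so(p',q')$ with $p'+q'=p+q+1$ have maximal image.
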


\begin{corollary}\label{cor:g22-global-rigidity}
    Let $M$ be a finite volume complete analytic pseudo-Riemannian manifold with an isometric and analytic $G_{2(2)}$-action with a dense orbit. Suppose that $\dim(M) \leq 21$ and that the normal bundle to the $G_{2(2)}$-orbits is non-integrable. Then, there exists a finite covering map $\widehat{M} \rightarrow M$, a lattice $\Gamma$ in $\wtSO_0(3,4)$, a homomorphism $\varphi : G_{2(2)} \rightarrow \wtSO_0(3,4)$ and a diffeomorphism
    \[
        \psi : \widehat{M} \rightarrow \wtSO_0(3,4)/\Gamma,
    \]
    which is $\varphi$-equivariant. Furthermore, there is a $G_{2(2)}$-invariant metric $\widehat{g}$ on $M$ for which $\psi$ is an isometry where $\wtSO_0(3,4)/\Gamma$ carries the bi-invariant metric induced from the Killing form of $\so(3,4)$.
\end{corollary}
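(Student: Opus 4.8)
The plan is to verify the hypotheses of Theorem~\ref{thm:global-rigidity-general} and then invoke that result directly, since the present corollary is exactly the $G = \Gex$ instance of that theorem once the structural conditions are checked. First I would note that the data of this corollary match the hypotheses of Lemma~\ref{lem:g22-and-dim<21}: the action is isometric, analytic, with a dense orbit, $\dim(M) \leq 21$, and $T\OO^\perp$ is non-integrable. Since $\dim(M) \leq 21 < 28 = 2\dim(\Gex)$, the low-dimensionality requirement $\dim(M) < 2\dim(G)$ of Theorem~\ref{thm:global-rigidity-general} holds, and Proposition~\ref{prop:dimM<2dimG} yields the non-degenerate splitting $T\widetilde{M} = T\OO \oplus T\OO^\perp$. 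Applying Lemma~\ref{lem:g22-and-dim<21} then supplies, for almost every $x \in S \cap U$, conditions (1) and (2) of the theorem: $\HH_0(x) = 0$, and $T_x\OO^\perp$ is the irreducible $7$-dimensional $\GG(x)$-module carrying a unique (up to a constant) invariant non-degenerate symmetric bilinear form. The same lemma identifies $\HH \simeq \so(3,4)$.

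Next I would check the three remaining structural hypotheses of Theorem~\ref{thm:global-rigidity-general}. Both $\g = \gex$ and $\HH \simeq \so(3,4)$ have simple complexifications, namely $\g_2^\C$ and $\so(7,\C)$. The algebra $\so(3,4)$ is not of Hermitian type, since $\so(r,s)$ is Hermitian only when $r = 2$ or $s = 2$, whereas here $r = 3$ and $s = 4$. The remaining and most delicate condition is that the image of every non-trivial homomorphism $\gex \rightarrow \so(3,4)$ is a maximal subalgebra. Because $\gex$ is simple, any such homomorphism is injective, and since $\gex$ admits a unique $7$-dimensional irreducible representation, the standard representation of $\so(3,4)$ must restrict to it; thus every such image is the split real form of the classical inclusion $\g_2 \subset \so(7)$.

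The hard part will be establishing this maximality claim. The inclusion $\g_2^\C \subset \so(7,\C) = \mathfrak{b}_3$ is a maximal subalgebra in Dynkin's classification, and one must argue that this maximality persists for the relevant real forms $\gex \subset \so(3,4)$. I would deduce this from the fact that the $7$-dimensional module is the lowest-dimensional faithful representation of $\gex$ and is $\gex$-irreducible: any intermediate subalgebra $\gex \subsetneq \mathfrak{m} \subsetneq \so(3,4)$ would have to act on the standard $7$-dimensional module compatibly with the $\gex$-irreducible structure, and a dimension count together with the list of low-dimensional irreducible $\gex$-modules (as used in the proof of Lemma~\ref{lem:g22-and-dim<21}) rules this out. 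With all hypotheses verified, Theorem~\ref{thm:global-rigidity-general} applies and produces the finite covering $\widehat{M} \rightarrow M$, the lattice $\Gamma \subset \wtSO_0(3,4)$, the homomorphism $\varphi : \Gex \rightarrow \wtSO_0(3,4)$, and the $\varphi$-equivariant diffeomorphism $\psi$; the identification $H = \wtSO_0(3,4)$ and the isometry statement for the metric $\widehat{g}$ follow from $\HH \simeq \so(3,4)$ and the final paragraph of that theorem's proof.
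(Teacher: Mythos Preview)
Your proposal is correct and follows exactly the same approach as the paper: verify the hypotheses of Theorem~\ref{thm:global-rigidity-general} via Lemma~\ref{lem:g22-and-dim<21}, check that $\so(3,4)$ is not of Hermitian type (since $3,4 \neq 2$), and invoke the theorem. The only point worth sharpening is your maximality argument: rather than reasoning via the standard $7$-dimensional module, note directly from the proof of Lemma~\ref{lem:g22-and-dim<21} that $\so(3,4) = \lambda_x^\perp(\GG(x)) \oplus W$ with $W$ an irreducible $7$-dimensional $\GG(x)$-module, so any intermediate subalgebra containing $\gex$ is a $\gex$-submodule and hence equals $\gex$ or $\so(3,4)$.
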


\begin{corollary}\label{cor:f4-global-rigidity}
    Let $M$ be a finite volume complete analytic pseudo-Riemannian manifold with an isometric and analytic $G$-action with a dense orbit, where $G$ is a simply connected non-compact Lie group of type $F_4$ over $\R$. Suppose that $\dim(M) \leq 78$ and that the normal bundle to the $G$-orbits is non-integrable. Then, there exist a $G$-invariant pseudo-Riemannian metric $\widehat{g}$ on $M$, a simply connected non-compact Lie group $H$ of type $E_6$ over $\R$, a homomorphism $\varphi : G \rightarrow H$ and a diffeomorphism
    \[
        \widetilde{\psi} : \widetilde{M} \rightarrow H,
    \]
    which is $\varphi$-equivariant and isometric for $\widetilde{M}$ carrying the metric $\widehat{g}$ and $H$ carrying the bi-invariant metric defined by the Killing form of $\HH$.

    Furthermore, if $H$ is not of Hermitian type, then there also exist a lattice $\Gamma$ of $H$ and a finite covering map $\widehat{M} \rightarrow M$ so that $\widetilde{\psi}$ induces a diffeomorphism
    \[
        \psi : \widehat{M} \rightarrow H/\Gamma.
    \]
    In other words, if $H$ is not of Hermitian type, then $(M,\widehat{g})$ is isometric, up to a finite covering, to $H/\Gamma$.
\end{corollary}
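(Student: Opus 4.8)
The plan is to combine Lemma~\ref{lem:typeF-and-dim<78} with the proofs of Theorems~\ref{thm:local-rigidity-general} and \ref{thm:global-rigidity-general}, verifying that their hypotheses hold in the present situation. First I would fix a point $x \in S\cap U$ realizing the conclusions of Lemma~\ref{lem:typeF-and-dim<78}: namely $\HH_0(x) = 0$, the normal space $T_x\OO^\perp$ is the $26$-dimensional irreducible $\GG(x)$-module carrying a unique (up to a constant) invariant non-degenerate symmetric bilinear form, and $\HH$ is a non-compact simple real Lie algebra of type $E_6$. Since $\g$ is of type $F_4$ and $\HH$ of type $E_6$, both have simple complexifications $\mathfrak{f}_4^\C$ and $\mathfrak{e}_6^\C$, and the dimension bound $\dim(M)\leq 78 < 104 = 2\dim(G)$ holds; thus all hypotheses of Theorem~\ref{thm:local-rigidity-general} are satisfied. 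Applying it yields the $G$-invariant analytic metric $\widehat{g}$ on $M$, obtained by rescaling along $T\OO$ and $T\OO^\perp$, which is locally isometric to $H$ with the bi-invariant metric defined by the Killing form of $\HH$.

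For the first claim I would reproduce the opening of the proof of Theorem~\ref{thm:global-rigidity-general}, which uses completeness but neither the non-Hermitian nor the maximality hypotheses. By completeness of $M$, hence of $\widetilde{M}$, Proposition~\ref{prop:completeness-actions} integrates the Killing subalgebra $\HH \subset \Kill(\widetilde{M})$ to a global isometric right action of $H$ (simply connected of type $E_6$, so $\widetilde{H} = H$). The orbit map $\widetilde{F} : H \rightarrow \widetilde{M}$, $h \mapsto xh$, is a local diffeomorphism at the identity, hence everywhere by equivariance, and after rescaling to $\widehat{g}$ it becomes a local isometry for the bi-invariant metric on $H$. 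Since a bi-invariant metric is complete, $\widetilde{F}$ is a covering map, and simple connectedness of $\widetilde{M}$ forces it to be an isometry. Setting $\widetilde{\psi} = \widetilde{F}^{-1}$ gives the desired isometry $\widetilde{M} \rightarrow H$; using $\widetilde{G} = G$ (as $G$ is simply connected), lifting the $G$-action, and the splitting $\Iso_0(H) = L(H)R(H)$ in which the right factor already realizes $\HH$, the induced homomorphism lands in $L(H) = H$ and produces $\varphi : G \rightarrow H$ for which $\widetilde{\psi}$ is $\varphi$-equivariant. This establishes the first claim unconditionally.

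For the second claim, assume $H$ is not of Hermitian type. The one remaining hypothesis of Theorem~\ref{thm:global-rigidity-general} to check is that the image of every non-trivial homomorphism $\g \rightarrow \HH$ is maximal. Since $\g \simeq \mathfrak{f}_4$ is simple, any such homomorphism is injective with image a copy of $\mathfrak{f}_4$ inside $\mathfrak{e}_6$, and the decomposition $\HH = \GG(x) \oplus \VV(x)$ from Proposition~\ref{prop:VV(x)} exhibits $\VV(x)$ as the $26$-dimensional irreducible $\GG(x)$-module. Maximality then follows directly: any subalgebra $\mathfrak{m}$ with $\GG(x) \subsetneq \mathfrak{m}$ splits as $\GG(x) \oplus (\mathfrak{m}\cap\VV(x))$, so $\mathfrak{m}\cap\VV(x)$ is a nonzero $\GG(x)$-submodule, hence all of $\VV(x)$ by irreducibility, giving $\mathfrak{m} = \HH$. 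With maximality and the non-Hermitian assumption in place, the remainder of the proof of Theorem~\ref{thm:global-rigidity-general} applies verbatim: the centralizer $Z = Z_{\widetilde{H}}(\varphi(\widetilde{G}))$ has Lie algebra contained in $d\varphi(\g)$ by maximality, hence is discrete, and the non-Hermitian condition makes the relevant maximal compact subgroup compact, so $Z$ is finite; this yields the lattice $\Gamma$ and the finite covering $\psi : \widehat{M} \rightarrow H/\Gamma$, with finiteness of the covering together with the finite volume of $(M,g)$ (and the proportionality of the volume forms for $g$ and $\widehat{g}$) ensuring that $\Gamma$ is a lattice. The main obstacle I anticipate is the maximality verification: one must exclude any subalgebra strictly between $\mathfrak{f}_4$ and $\mathfrak{e}_6$, which here reduces cleanly to the irreducibility of the $26$-dimensional complement established in Lemma~\ref{lem:typeF-and-dim<78}, but which in general relies on knowing that this is the only admissible isotypic decomposition.
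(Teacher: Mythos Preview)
Your approach is correct and essentially the same as the paper's, which simply says to use the proof of Theorem~\ref{thm:global-rigidity-general} for the first claim and that the second claim follows directly; you have unpacked these steps with the appropriate care.

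One technical point deserves attention in the maximality check. You verify maximality for the specific subalgebra $\GG(x) \subset \HH$, using the decomposition $\HH = \GG(x) \oplus \VV(x)$ from Proposition~\ref{prop:VV(x)}. However, the proof of Theorem~\ref{thm:global-rigidity-general} actually uses maximality of $d\varphi(\g)$, which is the image of $\g$ under the homomorphism $\varphi$ arising from the lifted $\widetilde{G}$-action on $H$ via $\widetilde{F}$; there is no reason this coincides with $\GG(x)$ as a subalgebra of $\HH$. To cover $d\varphi(\g)$ (or, as the hypothesis of Theorem~\ref{thm:global-rigidity-general} demands, every non-trivial homomorphism) your irreducibility argument must be run for an arbitrary embedding $\g' \simeq \mathfrak{f}_4 \hookrightarrow \HH$. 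This is easy: any $\g'$-invariant complement $W$ to $\g'$ in $\HH$ has dimension $26$, so by Weyl's formula it is either the irreducible $26$-dimensional module or a sum of trivial modules; in the latter case $[\g',\HH] = [\g',\g'] = \g'$ would make $\g'$ a proper ideal of the simple Lie algebra $\HH$, a contradiction. Hence $W$ is irreducible and your maximality argument applies verbatim with $\g'$ in place of $\GG(x)$. You anticipated this issue in your final remark; this is the clean resolution.
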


\section{Remarks and further results}
In the local and global rigidity theorems one obtains a group $H$ from the action of a group $G$. In some, but not all, of these cases the pair $(H,G)$ is in fact a symmetric pair (see \cite{Berger}). It is also remarkable that some of the representations $\GG(x) \rightarrow \so(T_x\OO^\perp)$ that appear in our arguments yield symmetric pairs as well. It would be interesting to have a better understanding of the role played by symmetric pairs in the rigidity of $G$-actions. A first approach could be to find conditions for the manifold $M$ with its isometric $G$-actions to be of the form $H/\Gamma$ so that $(H,G)$ is a symmetric pair.

In both the local and global theorems we have considered on the group $H$ the bi-invariant metric induced by the Killing form of its Lie algebra. This is the reason to require $\HH$ to have simple complexification as well as the uniqueness condition on the metric in $T_x\OO^\perp$ as stated in Lemmas~\ref{lem:so-and-dim<p+q}, \ref{lem:g22-and-dim<21} and \ref{lem:typeF-and-dim<78}. However, it is possible to drop some of these conditions by allowing $H$ to carry some bi-invariant metric (not necessarily coming from a Killing form).

Our rigidity results are general enough to consider many cases, new and old. However, they do not cover all $G$-actions studied so far. For example, in \cite{OQ-U} we consider the case $G = \mathrm{U}(p,q)$ which does not fit under our current development. However, it should be possible to prove a local rigidity theorem for $\mathrm{U}(p,q)$-actions along the same lines that we have here.

One can also guess about the hypotheses and conclusions in these sort of results. In particular, it is of interest to find if the global rigidity results hold without assuming completeness or find counterexamples. This is for the moment beyond our current techniques.

\end{document}